\newcommand{\mc}{\mathcal}
\newcommand{\mb}{\mathbb}
\newcommand{\R}{\mb R}
\newcommand{\N}{\mb N}
\newcommand{\Z}{\mb Z}
\newcommand{\T}{\mb T}
\newcommand{\eea}{\end{align}}
\renewcommand{\epsilon}{\varepsilon}
\renewcommand{\bar}{\overline}
\renewcommand{\tilde}{\widetilde}
\newcommand{\bo}{\boldsymbol}
\renewcommand{\phi}{\varphi}
\renewcommand{\angle}{\measuredangle}
\renewcommand\upsilon{\theta}
\newtheorem{theorem}{Theorem}[section]
\newtheorem{corollary}[theorem]{Corollary}
\newtheorem{lemma}[theorem]{Lemma}
\newtheorem{proposition}[theorem]{Proposition}
\theoremstyle{definition}
\newtheorem{definition}[theorem]{Definition}
\newtheorem{remark}[theorem]{Remark}
\newtheoremstyle{algorithm}
{4pt}
{4pt}
{}
{}
{}
{:}
{\newline}
{}
\newtheorem{algorithm}{Algorithm}
\newcommand{\balgorithm}{\begin{algorithm}\begin{framed}\ }
\newcommand{\ealgorithm}{\end{framed}\end{algorithm}}
\newcommand{\bd}{\begin{definition}}
\newcommand{\ed}{\end{definition}}
\newcommand{\bt}{\begin{theorem}}
\newcommand{\et}{\end{theorem}}
\newcommand{\bp}{\begin{proposition}}
\newcommand{\ep}{\end{proposition}}
\newcommand{\bc}{\begin{corollary}}
\newcommand{\ec}{\end{corollary}} 
\newcommand{\bl}{\begin{lemma}}
\newcommand{\el}{\end{lemma}}
\newcommand{\br}{\begin{remark}}
\newcommand{\er}{\end{remark}}
\title{{Existence of physical measures in some Excitation-Inhibition Networks}}
\author{Matteo Tanzi} 
\address{Matteo Tanzi: Courant Institute of Mathematical Sciences, New York University, New York, NY 10012, USA}
\email{matteo.tanzi@nyu.edu}
\thanks{MT acknowledges funding from the H2020 Marie S{\l}odowska-Curie Actions, project: ``Ergodic Theory of Complex Systems" (project no. 843880)}
\author{Lai-Sang Young}
\address{Lai-Sang Young: Courant Institute of Mathematical Sciences, New York University, New York, NY 10012, USA, and Institute for Advanced Study, Princeton, New Jersey 08540, USA}
\email{lsy@cims.nyu.edu}
\thanks{LSY was supported in part by NSF Grant DMS 1901009}
\begin{document}

\maketitle

\begin{abstract}  In this paper we present a rigorous analysis of a class of coupled dynamical systems
in which two distinct types of components, one excitatory and the other inhibitory, interact
with one another. These network models  are finite in size but can be arbitrarily large. 
They are inspired by real biological networks, and possess features that are idealizations of those in 
biological systems. Individual components of the network are represented by simple, much studied
dynamical systems. Complex dynamical patterns on the network level emerge as a result 
of the coupling among its constituent subsystems.
Appealing to existing techniques in (nonuniform) hyperbolic theory, we study their Lyapunov
exponents and entropy, and prove that large time network dynamics are governed by 
physical measures with the SRB property.
\end{abstract}
{

\section{Introduction}
As dynamical systems come in too many flavors to be classified 
or even systematically described, when studying the subject one usually learns from paradigms. 
In the mathematical theory of chaotic systems, a great deal of intuition 
has been derived from classical examples such as expanding circle maps \cite{Szlenk},
geodesic flows on manifolds of negative curvature \cite{hopf51statistik,anosov1967geodesic}, hyperbolic billiards \cite{Sinai1,BunimovichErgodicProp} \cite{bunimovich1990markov,chernov2006chaotic}, the Lorenz attractor \cite{lorenz1962statistical,guckenheimer1979structural}, logistic maps \cite{jakobson1981absolutely,collet1980abundance,benedicks1985iterations}, 
H\'enon attractors and generalizations \cite{benedicks1991dynamics,young1993sinai,wang2008toward}, and so on. Our notion of what a chaotic dynamical system 
looks like has been intimately tied to these examples. Textbook pictures of
chaotic behavior are those with identifiable expanding and contracting 
directions in their phase spaces, and where most pairs of nearby orbits 
can be seen to diverge quickly, at exponential rates.

Chaotic behavior in the sense of hyperbolicity in high dimensional 
systems can have a different appearance, however. Independent of
dimension, the positivity of Lyapunov exponents must, by definition, 
entail the same local geometry of invariant subspaces and invariant 
manifolds, but visualizing this picture for a system with high dimensional phase space
 can be challenging. 
For high dimensional systems, it may be profitable to turn to other more 
salient characteristics. 
For example, in a network in which a large number of simpler constituent 
dynamical systems are coupled together, dynamics of the constituent 
subsystems may be more readily observable. Properties such as degree 
of synchronization, the coherence or incoherence of dynamical behavior among constituent
subsystems, the number of distingishable states and the predictability of dynamical patterns --- all are 
known to be of consequence in real-world examples, see e.g. \cite{buck1966biology,traub1982cellular,kuramoto2003chemical,blaabjerg2006overview}. 
But how are these behaviors related to hyperbolicity?
What are the signatures of chaotic behaviors in networks? How is the number of
positive Lyapunov exponents reflected? And do concepts like SRB
measures and physical measures continue to make sense? These and other
questions are part of a broad swath of open territory in the theory  of high dimensional 
dynamical systems that remains to be explored, and are most likely going to be (at least partially) answered through the introduction of new paradigms for high-dimensional systems.

In the belief that examples will contribute to our understanding of hyperbolicity in
high dimensional systems, we present in this paper { a new class of models  --- 
new in the sense that their ergodic theory has not been studied ---
and demonstrate how they can be analyzed using existing tools. 
These models are examples of Excitation-Inhibition networks.
They consist of} arbitrary numbers of interacting components some of which excite and
others suppress. The interaction is bi-directional and the model is more general
than a skew-product: an excitatory subsystem excites inhibitory components 
which when activated send feedback inhibition to the excitatory subsystem. 
These examples are inspired by biology, but we do not 
pretend  that they are depictions of any specific biological system. In order to make 
connections with hyperbolic theory, it was necessary to make the models analyzable, 
and to do that, we have had to introduce a fair amount of idealization.

{ Though far from the first such examples to be studied, we mention 
two ways in which our models differ from many previous works.
The first is that they}
include interactions of components that are not identical.
While homogeneous materials in physics (e.g. \cite{spohn1977stationary}) have 
inspired the study of systems in which identical maps are coupled (for a random sample see 
\cite{BS88,K93,BK96,kobre2007extended,keller2009map,SB16} 
and the articles in \cite{CF05}),
examples from biological networks typically 
involve different substances or agents with distinct characteristics and functions 
(see { \cite{breitsch2015cell,fernandez2014typical,mauroy2008clustering,wilson2015clustered,young2012clustering, lin2020origin}}; see also \cite{PerTanzHeterCoupl}). 
{ The network models studied here are an addition to the relatively small collection
of examples of the latter kind.}
A second characteristic of our example networks is that their hyperbolicity is not 
apparent; that is, expanding and contracting directions are not obvious to the eye, 
unlike, e.g., weakly coupled lattices of expanding circle maps, 
as in e.g. \cite{BS88,KL06} or even more strongly coupled networks, as in e.g. \cite{KY10}.
It is our hope that examples in which hyperbolic properties are emergent may offer greater insight
into how chaotic (or nonchaotic) behaviors would manifest in these large dynamical systems.

Finally, though the mathematical models studied here are not intended to be models
of realistic physical or biological systems, they are inspired by real biological phenomena. We have 
incorporated into our models features such as the competition and balancing of excitatory 
and inhibitory forces, the activation of certain processes upon 
threshold crossing, and relaxation to equilibria in the absence of inputs.  
This paper is a small step in the direction of promoting biology-inspired models, which are,
in our view, underrepresented among the current pool of examples in dynamical systems.
More generally, we wish to demonstrate that dynamical systems theory 
has the language and the tools to shed light on natural phenomena, to offer insight on
a conceptual level even when  analysis of the system exactly as defined
is out of reach. 
\section{Model Description and Main Results}\label{Sec:Modelandmainres}

Sect \ref{Sec:Overview} contains an informal overview of the model; details
are given in Sect \ref{Sec:PrecModelDesc}. Our main results are stated in Sect \ref{Sec:Mainresults} below.

\subsection{Overview}\label{Sec:Overview}

{The network studied in this paper} is made up of arbitrary numbers of
{\it excitatory} and {\it inhibitory} components. Simplifying, we group the excitatory 
components together to form a single ``excitatory environment" (E-environment).  Inhibitory 
components (I-components) are modeled individually.

When decoupled from the rest of the network, the E-environment 
is modeled as an Anosov flow that we take as exemplification of chaotic behavior in continuous time.  A global cross-section $\sigma_0$ to this flow is fixed.
Excitatory signals are sent to 
I-components each time the trajectory returns to $\sigma_0$.

When decoupled from the E-environment, I-components are modeled 
as North-South flows on the unit circle, i.e., there are two fixed points, to be
thought of as N and S-poles; all nonstationary trajectories go from N-pole to S-pole. 
Each excitatory signal produces a fairly abrupt rotation of the circle. For each I-component, 
the amount rotated depends on the location of $\sigma_0$ visited by the Anosov  trajectory;
consecutive visits produce what  resembles a sequence of random rotations. 
Different I-components are
affected differently at each return to $\sigma_0$.

\begin{figure}
\center
\includegraphics[scale=0.5]{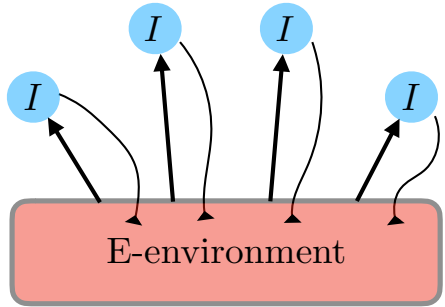}
\caption{Network architecture: An Excitatory (E) environment interacting with an arbitrary number of Inhibitory (I) components. When activated, the I-components slow down the dynamics of the E-environment.}
\label{Fig:netarchitect}
\end{figure}

Finally, there is feedback from the I-components to the E-environment. For each I-component,
there is a designated region of its phase space with the property that while there, the component has
a suppressive effect on the E-environment, modeled as a reduction in speed for the Anosov
flow. The I-components do not interact with one another directly, but only through the
E-environment: collectively they determine when the next excitatory signal will be.


This completes our model description in words. A precise description is given below.
As we will show, our model is a piecewise smooth flow on a $(3+N)$-dimensional phase space
where $N \in \Z^+$ is the number of I-components.
Our main result is that the dynamics of {networks of this type can be} described by 
natural invariant measures that are physical measures 
with the SRB property; see {\cite{eckmann1985ergodic}}. {Numerical
illustrations are presented in} {Sect. \ref{Sec:RelModNumSim}}.

\subsection{Precise model description}\label{Sec:PrecModelDesc}

For $k \in \Z^+$, let $\T^k := \R^k/\Z^k$ denote the $k$-dimensional torus, which we
also identify with $\T \times \cdots \times \T$, the $k$-fold product of $\T=\R/\Z$. 
Euclidean norms on $\T^k,  \R$ and products thereof are denoted by $| \cdot |$.
Below we present the full model in four steps (A-D), describing first the dynamics
of the E-environment and I-components in isolation, i.e when each is decoupled from 
the rest of the system, before describing how they interact.

\bigskip \noindent
A. {\it Excitatory environment in isolation.} Consider a flow $f^t$ that is a time-$1$-suspension 
of a linear hyperbolic toral automorphism $A: \T^2 \circlearrowleft$.

More precisely, let $A: \T^2 \circlearrowleft$ be an Anosov diffeomorphism, which 
for simplicity we assume to be linear, i.e. there is a constant splitting 
$T\T^2=E_A^u\oplus E_A^s$ 
such that for all $p \in \T^2$, 
$$DA_pE_A^u=E_A^u, \qquad DA_pE_A^s=E_A^s; $$
and there exists $\lambda>0$ such that 
\[
|DA_pv|=e^\lambda |v|\quad\forall v\in E^u_A,\quad\quad
|DA_pv|=e^{-\lambda}|v|\quad\forall v\in E^s_A.
\]

Letting $(x,y,w)$ denote the coordinates in $\T^2 \times [0,1]$, the flow $f^t$ is defined
on the  set $M_f :=(\T^2 \times [0,1])/\sim$ where $(x,y,1)$ is identified with $(A(x,y),0)$.
To introduce a Riemannian metric on $M_f$, we assign to the tangent space at
$(x,y,w) \in \T^2 \times [0,1)$ an inner product with respect to which $E^u_A, E^s_A$ 
and the $w$-axis are orthogonal and the resulting norm $\|\cdot\|$ has
$\|\cdot\| = |\cdot|$ in the $w$-direction  and 
$$
\|v\| = e^{\lambda w} |v| \ \ \mbox{for } v \in E^u_A, \qquad 
\|v\| = e^{-\lambda w} |v| \ \ \mbox{for } v \in E^s_A\ .
$$
It is easy to see that 
\begin{itemize}
\item[(i)] $M_f$ equipped with $\| \cdot \|$ is a $C^\infty$ Riemannian manifold; 
\item[(ii)]  if $f^t$ is the flow generated by the {vector field $\partial_w$}, the positive unit vector
in the $w$-direction, then $f^t$ is a $C^\infty$ Anosov flow for which $\|Df^t(v)\| = e^{\lambda t} \|v\|$
for $v \in E^u_A$ and $\|Df^t(v)\| = e^{-\lambda t} \|v\|$ for $v \in E^s_A$ everywhere on $M_f$.
\item[(iii)] Finally, $\sigma_0 := \T^2 \times \{0\}$ is a cross-section to the flow with
first return map $=A$ and return time $\equiv 1$.
\end{itemize}
 The E-environment of our model when operating in isolation is given by the flow 
 $f^t$ on $(M_f, \|\cdot\|)$.
 
\bigskip \noindent
B. {\it Inhibitory components when decoupled from the E-environment.}  
Fix arbitrary $N \in \Z^+$. For $i = 1,2, \cdots, N$,  we let $g^t_i$ be the flow on $\T$
generated by a $C^2$ vector field $v_{g_i}$ with the following properties: 
\begin{itemize}
\item[(i)] $v_{g_i}(0)=v_{g_i}\left(\frac12\right)=0$, and $v_{g_i}(z) \neq 0$ for $z \ne 0, \frac12$; 
\item[(ii)] $v_{g_i}'(0)=\lambda_{i}^{-}$ and $ v_{g_i}'\left(\frac12\right)=\lambda_{i}^{+}
$ \ \ for some $\lambda_{i}^{-} < 0 < \lambda_{i}^{+}$. 
\end{itemize}
It follows from (i) and (ii) above that $g^t_i$ is a North-South (N-S) flow, with S-pole at $z_i=0$
and N-pole at $z_i=\frac12$. All other trajectories go from N-pole to S-pole.

\bigskip
The {\bf phase space} of the full model is $M := M_f  \times \T^N$.

\bigskip \noindent
C. {\it Action of E-environment on I-components without feedback.}  
For each $i \in \{1, \cdots, N\}$, we let $r_i:\T \rightarrow \T$ be a $C^2$ order-preserving 
local diffeomorphism of degree $ \ge 1$
and, {identifying $\T$ with $[0,1)$, we let $\hat r_i : [0,1) \to \R$ be the lift of $r_i$ with $\hat r_i(0) \in [0,1)$, 
i.e., $\hat r_i$ is the continuous function with the property that for all $x \in [0,1)$, $\pi \hat r_i(x) =
r_i(x)$ where $\pi : \R \to \R/\Z$ is the usual projection}. Fix $b\in(0,1)$. Consider the vector field $v_F$ on $M_f \times \T^N$ given by
\[
v_F(x,y,w; z_1, \cdots, z_N) =\left\{\begin{array}{ll} (\partial_w; \ b^{-1}\hat r_1(x), \cdots, b^{-1}\hat r_N(x))
& \quad \mbox{for } w \in [0,b)\\
(\partial_w; v_{g_1}, \cdots, v_{g_N})& \quad \mbox{for } w \in [b,1]\ .
\end{array}\right.
\] 

Let $F^t$ be the flow defined by $v_F$. Then $F^t$ is a skew-product 
with the Anosov flow $f^t$ in the base driving the dynamics on $N$ circle fibers 
with coordinates $z_1, \cdots, z_N$. For $w \in [0, b)$,
the action on each $z_i$-fiber is a rotation, the total amount
rotated at the end of $ b$ units of time being $\hat r_i(x)$, while on 
the rest of $M_f$,  fiber dynamics are N-S flows 
independent of the base.

Observe that $v_F$  is discontinuous at the two cross-sections 
$$\Sigma_0:=\{w=0\}  \qquad \mbox{and} \qquad 
\Sigma_{b}:=\{w=b\}  $$
to the flow $F^t$. These discontinuities pose no issues 
in the definition of the flow: a flowline starting from $\Sigma_0$ 
simply follows one vector field until it reaches $\Sigma_{b}$, where it switches to
the other vector field until it returns to $\Sigma_0$. On $\{w \in (0, b)\}$,
$v_F$ is also discontinuous at $\{x=0\}$, though starting from $\Sigma_0$, 
the time-$b$-map $F^{b}$ is well defined and is a $C^2$ 
diffeomorphism from $\Sigma_0$ to $\Sigma_{b}$ {because $r_i$ is a local diffeomorphism}.

\bigskip \noindent
D. {\it The full model: E-I interaction with feedback.} We have described above an E-to-I action 
that takes place on $\{w \in [0,b)\}$. For simplicity we will assume that the
feedback from I to E  takes place only on $\{w \in (b, 1]\}$. To define the latter, 
we modify the {vector field $\partial_w$} in a $z_i$-dependent way:
{Fix $\Phi:\{0,1,...,N\}\rightarrow \R$  an increasing function such that $\Phi(0)=0$ and $\Phi<1$.} Calling $\chi_{(1/2,1)}$ the characteristic function 
of the arc $(1/2,1) \subset \T=\R/\Z$, we define
\begin{equation} \label{eta}
\tilde v_f = \tilde v_{f; z_1, \cdots, z_N} = {{\left[1-\Phi\left(\sum_{i=1}^N\chi_{(1/2,1)}(z_i)\right)\right] }
\partial_w \ :=\  c_{\Phi} (z_1, \cdots, z_N) \partial_w\ .}
\end{equation}

That is to say, the presence of each $z_i$ in the region $(1/2,1)$ of $\T$ causes the Anosov
flow to reduce its speed by {some amount determined by the function $\Phi$. For example one could take $\Phi(n)=\eta n$ with $\eta\in [0,1/N)$, where the inhibitory  effect of multiple $z_i$ being in $(1/2,1)$ is additive.}

The vector field for the full model is then given by 
\[
v_{\bf F}=\left\{\begin{array}{ll} v_{F}& \quad \mbox{for } w \in [0,b)\\
(\tilde v_f; v_{g_1}, \cdots, v_{g_N})& \quad \mbox{for } w \in [b,1]\ .
\end{array}\right.
\] 
The flow associated with the vector field $v_{\bf F}$ is denoted ${\bf F}^t$. 

{Observe that on $\{w \in [b, 1]\}$, the vector field $v_{\bf F}$ is discontinuous at
$\cup_{i=1,\cdots, N} \{z_i = 0, \frac12\}$, the discontinuity set of the function {$c_\Phi$}
in (\ref{eta}). This set  is the union of a finite number of codimension one surfaces.
Because every $g^t_i$ leaves the interval $(\frac12, 1)$ invariant,
flowlines do not cross from one side of these surfaces to the other.}

\bigskip
While $F^t$, the flow without feedback, is a skew product with the Anosov flow $f^t$ in
the base and $z_i$-dynamics in the fibers, this skew-product  structure is not preserved with the
introduction of feedback. { However, because the I-units affect the E-component 
through time changes only, the first return map $H:\Sigma_0\rightarrow \Sigma_0$ of ${\bf F}^t$ 
continues to be a skew-product.} It can be written as $H=H_2\circ H_1$ where
$H_1:\Sigma_0\rightarrow \Sigma_{b}$ is given by
\[
H_1(x,y,0; z_1, \cdots, z_N)=\left(x,y, b;z_1+\hat r_1(x), \cdots, z_N+\hat r_N(x)\right)
\] 
and $H_2:\Sigma_{b} \rightarrow \Sigma_0$ is defined as follows:
\begin{equation}\label{Eq:DefH2}
H_2\left(x,y, b;z_1, \cdots, z_N\right)=(A(x,y),0,G(z_1, \cdots, z_N))
\end{equation}
where 
\begin{eqnarray*}
G(z_1, \cdots, z_N) &=& (g_1^{\tau(z_1,\cdots, z_N)}(z_1), \cdots, g_N^{\tau(z_1,\cdots, z_N)}(z_n))\\
\mbox{and}  \qquad \tau(z_1,\cdots, z_N) &=& 
(1- b){c_\Phi} (z_1, \cdots, z_N)^{-1}.
\end{eqnarray*}
{The formula for $\tau$, the time to go from $\Sigma_{b}$ to $\Sigma_0$,
can be understood as follows: Without feedback, $\tau \equiv 1-b$.
With feedback, it is lengthened by the factor indicated, so that 
$1-b \le \tau \le (1- b)(1-{\Phi(N)})^{-1} :=\tau_{\max}$.  The skew-product structure of $H$ will play a crucial role in our arguments. 

{ To summarize, then, the E-environment acts on the I-units with possibly large rotations
every time the Anosov flow returns to the cross-section $\Sigma_0$. After these rotations, the
I-units collectively determine the subsequent speed of the Anosov flow depending on the positions
to which they are rotated.
The I-units do not interact with one another directly but do so by influencing the time to the next activation,
that is, the time for the Anosov flow to reach $\Sigma_0$. }

\subsection{Technical assumptions}\label{Sec:TecAssump} The following conditions are assumed throughout:

\bigskip \noindent
(a) {\it The fiber flows $g^t_i$.} Recall that for all $i$, 
$g^t_i$ has an attractive fixed point at $z=0$ and a repelling fixed point at
$z=\frac12$. We now assume additionally that for each $i$,
there are numbers {$\delta_i^+, \delta_i^->0$}
and $c \in (0,1)$ such that
if
\[
I_i^+=\left[\frac12-{\delta_i^+},\frac12+{ \delta_i^+}\right] \quad \mbox{and} \quad
I_i^-=\left[0-{ \delta_i^-},0+{ \delta_i^-}\right]\ ,
\]
 then for all $(1- b) \le t \le \tau_{\max}$:
\begin{itemize}
\item[(A1)] $(g_i^t)'|_{I_i^+}> e^{\lambda}$, $(g_i^{t})'|_{I_i^-}\le c$, and
\item[(A2)]  $g^t_i(I_i^+)\supset \T\backslash I_i^-$.
\end{itemize}

\bigskip \noindent
(b) {\it The functions $r_i(x)$.} Recall that the total amount $z_i$ is rotated 
on $\{w \in [0,b)\}$ depends on the $x$-coordinate of the Anosov trajectory 
as explained in Paragraph C. We now impose some additional conditions on 
the rotation function $r_i(x)$. Define
\[
d_i := \mbox{dist}(\partial (I_i^-),\partial g_i^{(1- b)}(I_i^+)).
\]
Then there exist $\varepsilon>0$ small enough 
and $c'>0$ such that
\begin{itemize}
\item[(A3)]  $r_i'>\epsilon^{-1}$  except where $r_i \in (d_i,1-d_i)$,
\item[(A4)]  $r_i'>c'$ everywhere on $\T$.
\end{itemize}
The existence of  $c'$ satisfying (A4) follows form the fact that $r_i$ is a local diffeomorphism.
In Figure \ref{Fig:a1a4} we {illustrate the technical assumptions (A1)-(A4) and give
some examples of $r_i$.}

\begin{figure}
\center
\includegraphics[scale=0.42]{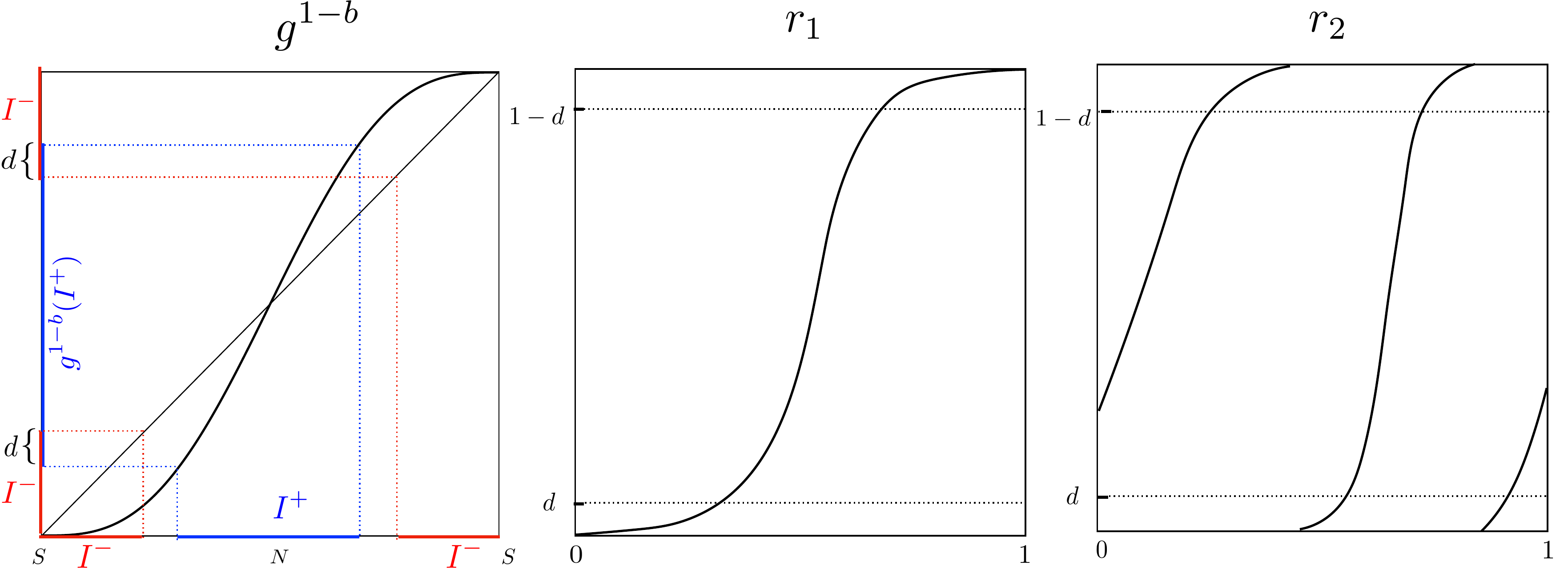}
\caption{The {left panel} above shows an example of the {time-$(1-b)-$map, $g^{(1-b)}$,} for a North-South flow. The diagram shows how $g^{1-b}(I^+)$ should overlap with $I^-$ for (A2) to be satisfied, where $I^-$ and $I^+$ satisfy (A1).  The other two panels show examples of functions $r_i$. {These functions have} the characteristic of being very steep for those $x$ such that $d\le r_i(x) \le 1-d$, and, although possibly very small, its slope is never equal to zero. {Here $r_1$ has degree 1 and $r_2$ has degree two.}}
\label{Fig:a1a4}
\end{figure}
\

{{\paragraph{\bf Example of N-S flow and function $r_i$ satisfying assumptions (A1)-(A4).}
A $2 \times 2$ invertible  matrix $M$ defines an action $\hat g:\R\times \R^2\backslash\{0\}\rightarrow \R^2\backslash\{0\}$  given by $g(t,v)= e^{tM}v$. Let $\mathbb P^1(\R)\cong \T$ be the 1-D real projective space. It is easy to check that $\hat g$ projects to a flow $g^t$ on $ \T$, and
that picking $M$ diagonal with entries $\pm \alpha$ gives rise to a 
North-South flow on $\T$ where $(g^t)' = e^{\pm \alpha t}$ at the two poles.
Given $\lambda$, $b$, and $\tau_{max}$, (A1)-(A2) are satisfied by choosing $\alpha$ 
sufficiently large.

Now fix $\kappa = $ the degree of $r_i$, the N-S flow (so that $d_i$ is fixed), and  $\epsilon\ll \kappa^{-1}$. Pick $\kappa$ disjoint closed arcs on $\T$, $\{[w_{1,\ell},w_{2,\ell}]\}_{\ell=0}^{\kappa-1}$, of length $\epsilon (1-2d_i)$ and assume they are ordered. Define $\tilde r_i:\T\rightarrow \R$ { to be a $C^2$ increasing function with
the following properties: (i) it is affine on $[w_{1,\ell},w_{2,\ell}]$  with constant slope $\epsilon^{-1}$ and $\tilde r_i([w_{1,\ell},w_{2,\ell}])=[d_i+\ell,1-d_i+\ell]$; (ii) on $(w_{2,\ell},w_{1,\ell+1})$, it increases monotonically
from $1-d_i+\ell$ to $d_i+\ell+1$ with $\tilde r_i' > 2d_i$ (independent of $\epsilon>0$). Finally let 
$r_i$ be the projection of $\tilde r_i$  to $\T$.}
}}

%
%
%

\section{Statement of the main results and {an illustrative example}}\label{Sec:Mainresults}
\subsection{Main results}
The model described in Sect. \ref{Sec:PrecModelDesc} is a piecewise smooth flow on a $(3+N)$-dimensional
manifold $M$ where $N \in \Z^+$ is arbitrary. {In finite dimensional systems, observable
events are often equated with positive Lebesgue measure sets, and 
{\it physical measures} are natural invariant measures.}
We begin by recalling the definition of a physical measure.
Below, $m$ denotes the Riemannian measure on $M$.

\begin{definition} Let $\varphi^t$ be a smooth or piecewise smooth flow on $M$.
A $\varphi^t$-invariant Borel probability measure $\mu$ is called a {\bf physical measure} if
there is a Borel set $U \subset M$ with $m(U)>0$ such that for every continuous function
$h: M \to \R$, 
\[
\frac{1}{T} \int_0^T h(\varphi^t(x)) dt \to \int h d\mu \qquad \mbox{as } T \to \infty
\]
for $m$-a.e. $x \in U$.
\end{definition}

If $\varphi^t$ possesses an ergodic probability 
measure $\mu$ absolutely continuous with respect to Lebesgue, then $\mu$ is  
a physical measure by the Birkhoff Ergodic Theorem. But 
the notion of physical measure is meaningful even when $\varphi^t$ is dissipative,
i.e., when {all invariant probability measures
are singular with respect to $m$.}

\bigskip {
We will assert below that under suitable conditions, the flow ${\bf F}^t$ defined in Sect. \ref{Sec:PrecModelDesc} possesses a physical measure. To state these conditions, we
first clarify {the relations among the system's constants}:

\medskip \noindent
(i) chosen freely are $N = \#$ I-components and $\kappa = $ maximum degree of the $r_i$;

\smallskip \noindent
(ii)  $\lambda$,  {the expanding coefficient of the Anosov flow,}, depends on $N$ (see below);

\smallskip \noindent
(iii) the $g^t_i$ are then chosen to satisfy (A1) and (A2) (note (A1) depends on $\lambda$);

\smallskip \noindent
{(iv) $c'$ is chosen {depending on $\kappa$ and $g_i$};} 

\smallskip \noindent
(v) $\epsilon$ depends on everything in (i)-(iii) (see below); and

\smallskip \noindent
(vi) the $r_i$ are then chosen to satisfy (A3)-(A4) (which depends on $\epsilon$).

\smallskip
\begin{theorem}\label{Thm:ExistPhisicMeas} Let ${\bf F}^t$ have the form
 in Sect. \ref{Sec:PrecModelDesc}. There are two functions 
$\lambda(N)$ and $\epsilon(N, \lambda, g^t_i, \kappa)$ such that if 
in addition to satisfying (A1)-(A4), $\lambda$ and $\epsilon$ are chosen so that
$$\lambda \ge \lambda (N) \qquad \mbox{and} \qquad \epsilon \le \epsilon(N, \lambda, g^t_i, \kappa), $$ 
then ${\bf F}^t$ admits a physical measure $\mu$.
\end{theorem}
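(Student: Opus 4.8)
The plan is to reduce the flow ${\bf F}^t$ to its return map $H:\Sigma_0\to\Sigma_0$ on the global cross-section $\Sigma_0\cong\T^2\times\T^N$, establish that $H$ admits an SRB/physical measure $\nu$, and then suspend $\nu$ back to a physical measure $\mu$ for the flow using the (bounded, bounded-away-from-zero) return time $\tau$. The last step is routine once the first return time is integrable and bounded below, which it is since $1-b\le\tau\le\tau_{\max}$; the continuity of observables along flowlines and the fact that flowlines leave $\Sigma_0$, flow for time between $1-b$ and $\tau_{\max}$, and return, lets us transfer Birkhoff averages from $H$ to ${\bf F}^t$. So the heart of the matter is constructing a physical measure for the piecewise-smooth skew-product $H=H_2\circ H_1$.

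For $H$, I would exploit the skew-product structure: the base map is the Anosov diffeomorphism $A:\T^2\circlearrowleft$, which is uniformly hyperbolic and has a smooth SRB measure (Lebesgue, since $A$ is linear and area-preserving), with unstable manifolds that are lines in the $E_A^u$ direction. The strategy is to build an $H$-invariant measure that projects to Lebesgue on $\T^2$ and has conditional densities that are absolutely continuous along unstable directions — i.e. to run a Pesin-theoretic / Ledrappier-Young style argument showing the natural measure obtained by pushing forward Lebesgue and taking weak-$*$ limits is SRB. The key analytic input is a uniform estimate on the fiber dynamics: one must show that in the $z$-fibers the combined map $z_i\mapsto g_i^{\tau(\cdot)}(z_i+\hat r_i(x))$ has, on a large-measure set of the phase space, an expanding direction, so that the full map $H$ has $3$ positive Lyapunov exponents' worth of... more precisely $N$ positive fiber exponents stacked on the one positive base exponent. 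Here assumptions (A1)–(A4) are exactly what is needed: (A3)–(A4) say $r_i$ is expanding (steeply so, except on a short arc whose image misses the critical region), so $H_1$ stretches the $z_i$-fibers by at least $\epsilon^{-1}$ away from a small bad set; (A1) says $g_i^t$ expands by more than $e^\lambda$ near the repelling pole $I_i^+$; and (A2) ensures $g_i^t(I_i^+)$ covers essentially all of $\T$, giving a Markov/covering property so that the bad set (where $z_i$ has drifted into the contracting neighborhood $I_i^-$ of the attracting pole) is repeatedly escaped. The role of the largeness of $\lambda\ge\lambda(N)$ is to dominate: the base expansion rate must beat the worst-case fiber contraction and the bookkeeping from $N$ fibers simultaneously; the smallness of $\epsilon\le\epsilon(N,\lambda,g_i^t,\kappa)$ makes the bad arcs of the $r_i$ negligible and ensures the distortion and the measure of points that fail to be "generic" is controllable.

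Concretely I would: (1) fix an admissible family of unstable curves/disks for $H$ — products of $E_A^u$-line segments in the base with graphs over the $z$-fibers that are nearly horizontal (in a cone field that I would first check is $DH$-invariant, using (A3)–(A4) and $\lambda$ large); (2) show that $H$ expands such disks uniformly, with bounded distortion for the induced densities, invoking the $C^2$ hypotheses on $v_{g_i}$ and $r_i$ and the explicit form of $\tau=(1-b)c_\Phi^{-1}$ (note $c_\Phi$ is locally constant in $z$ except on codimension-one surfaces that flowlines don't cross, so the piecewise-smoothness is benign — there is no "crossing" discontinuity, only a partition into invariant-ish pieces); (3) push forward Lebesgue measure on one such unstable disk under $H^n$, normalize, and extract a weak-$*$ limit $\nu$; (4) identify $\nu$ as SRB by the standard argument (absolute continuity of the holonomy of the stable foliation, using hyperbolicity of the base and the contraction near $I_i^-$ provided by (A1)) and hence as a physical measure for $H$; (5) suspend.

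The main obstacle I expect is step (2)–(4) on the fiber directions: unlike the base, the fiber dynamics is only nonuniformly hyperbolic — a point $z_i$ can sit in the contracting neighborhood $I_i^-$ of the attracting pole for a while, and one must show, using the covering property (A2) together with the genuine randomness injected by the large rotations $\hat r_i(x)$ as $x$ equidistributes under $A$, that such excursions are not too long/frequent and that the resulting (random) products of derivatives have positive Lyapunov exponent a.e. This is where the quantitative relation $\lambda\ge\lambda(N)$, $\epsilon\le\epsilon(N,\lambda,g_i^t,\kappa)$ must be pinned down precisely, and where the bulk of the technical work — choosing a good reference set, controlling return times to it, and verifying the hyperbolicity estimates of (nonuniform) hyperbolic theory in the style of, e.g., Pesin theory for the suspension and the general machinery for SRB measures of maps with a dominated/partially hyperbolic splitting — will reside. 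A secondary, more bookkeeping-type difficulty is handling the $N$ fibers simultaneously: the "bad" event is that \emph{some} $z_i$ is bad, so naive union bounds cost a factor $N$, which is precisely why $\lambda$ (and the threshold for $\epsilon$) must be allowed to grow with $N$.
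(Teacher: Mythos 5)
Your reduction to the return map $H$ on $\Sigma_0$ and the final suspension step match the paper, and the idea of pushing forward Lebesgue measure on unstable objects and extracting a weak-$*$ limit is also the paper's. But there is a fundamental error at the core of your fiber analysis: you assert that the $N$ fiber directions carry positive Lyapunov exponents, so that $H$ has an $(N+1)$-dimensional unstable subspace and you should push forward Lebesgue on $(N+1)$-dimensional disks. This is backwards. The paper's Theorem \ref{Thm:ExistSRBforflow} asserts, and the argument requires, that there is exactly \emph{one} positive exponent (the base unstable $\lambda$); all $N$ fiber exponents are \emph{negative}, and the SRB measure's unstable manifolds for $H$ are one-dimensional curves. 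The whole strategy (Sect.~\ref{Sec:DistPushFor}, Proposition~\ref{Prop:ConcNearToSink}) is to show that pushed-forward mass concentrates near the attracting S-poles $\{z_i=0\}$, where $(g_i^t)'\le c<1$ by (A1), so the Ergodic Theorem gives $\lambda_{z_i}<0$ (Lemma~\ref{Lem:NegExp}). You instead try to bound the time spent \emph{in} $I_i^-$, which is exactly the opposite of what needs to be controlled.

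Your intuition that ``(A3)--(A4) say $r_i$ is expanding \ldots so $H_1$ stretches the $z_i$-fibers by at least $\epsilon^{-1}$'' is where the error enters. The map $H_1$ acts on each fiber $\{(x,y)\}\times\T^N$ by the rigid translation $z_i\mapsto z_i+\hat r_i(x)$, so $D(H_1)|_{E_{z_1\cdots z_N}}=\mathrm{Id}$ (this is stated explicitly in Sect.~\ref{Sec:LEHmu}); the steepness of $r_i$ contributes to the off-diagonal entries $\partial z_i'/\partial x$ --- i.e.\ to the tilt of the iterated curves $\gamma_n$ --- but never to the fiber Lyapunov exponent, which is governed solely by $(g_i^\tau)'$. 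The role of (A3) in the paper is entirely different from what you describe: combined with (A1)--(A2), it forces the graph of $\psi_n^i$ to be extremely steep (slope $\ge\beta\epsilon^{-1}$) precisely on the $u$-set where $\psi_n^i$ escapes $I_i^-$, which, together with the range bound $\mc R(\hat\psi_n^i)\le c_2 e^{\lambda n}$ (Proposition~\ref{Prop:RangeLiftsFundDomains}, requiring $e^\lambda>3$ for $N=2$), gives the $O(\epsilon)$ mass bound outside $I_i^-$. This is also where $\lambda\ge\lambda(N)$ enters --- to beat the growth in the number of singularities of $\gamma_n$ as $N$ grows --- not to dominate fiber contraction. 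Finally, even granting a corrected hyperbolic picture, your step (4) (absolute continuity of stable holonomies, directly) is not what the paper does either: once the entropy formula $h_\nu(H)=\lambda$ is established via the skew-product/Abramov--Rokhlin identity and the atomicity of fiber conditionals (Proposition~\ref{Prop:EntropyEquality}, Lemma~\ref{Lem:AtomFiber}), the paper invokes the Ledrappier--Young converse (extended to piecewise-smooth maps in Theorem~\ref{Thm:EntFormSing}) to conclude SRB, precisely to avoid the delicate direct density estimates on the singular map $H$ that your route would require.
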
}

\smallskip
For systems with hyperbolic properties, a standard way to produce
physical measures is to construct SRB measures.

\begin{definition}\label{Def:SRBMeasureFlows} Let $\varphi^t$ be a smooth or piecewise smooth flow on $M$.
A $\varphi^t$-invariant Borel probability measure $\mu$ is called an {\bf SRB measure} if

\medskip
(i) $\varphi^t$ has a positive Lyapunov exponent $\mu$-a.e.;

\medskip
(ii) unstable manifolds are defined $\mu$-a.e.

\medskip
(iii) conditional probabilities of $\mu$ on unstable manifolds have densities.
\end{definition}

For definiteness, let us use the term``unstable manifolds" to refer to 
weak unstable manifolds, which includes flowlines, to be distinguished from strong unstable
manifolds, which are one dimension lower. For flows without singularities, 
(ii) follows automatically from (i). In the presence of
discontinuities or singularities, additional conditions are needed to ensure that 
unstable manifolds are defined; {that is why} we have included that as part of the definition.

\smallskip
\begin{theorem}\label{Thm:ExistSRBforflow}  Under the assumptions of Theorem \ref{Thm:ExistPhisicMeas}, the flow ${\bf F}^t$ 
has an ergodic SRB measure $\mu$
with exactly one positive Lyapunov exponent and no zero Lyapunov exponent aside from
the one in the flow direction.
\end{theorem}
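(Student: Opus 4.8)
The plan is to reduce the statement about the flow ${\bf F}^t$ to a statement about the first return map $H:\Sigma_0\to\Sigma_0$, and then to establish the existence of an SRB measure for $H$ using nonuniform hyperbolic theory. The key structural fact, already noted in Sect.\ \ref{Sec:PrecModelDesc}, is that $H = H_2\circ H_1$ is a skew-product over the Anosov diffeomorphism $A:\T^2\circlearrowleft$, with fibre $\T^N$ carried by the $z$-coordinates. First I would analyze the fibre maps: starting from $\Sigma_0$, the map $H_1$ rotates each $z_i$ by $\hat r_i(x)$, and then $H_2$ applies the time-$\tau$ flow $g_i^\tau$. The hypotheses (A1)--(A4) are exactly designed so that the composed fibre dynamics is expanding in a controlled way near the repelling poles $z_i=\tfrac12$: condition (A3) says that $r_i$ is very steep (slope $>\epsilon^{-1}$) except when $r_i(x)$ lands inside the ``good'' window $(d_i,1-d_i)$, and for such $x$ condition (A2) guarantees the subsequent $g_i^{(1-b)}$-image covers $\T\setminus I_i^-$ while (A1) gives expansion $>e^\lambda$ on $I_i^+$. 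So the composition is expanding: for ``most'' $x$ the expansion comes from $r_i'$, and for the exceptional $x$ it comes from $g_i^\tau{}'$.

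The core of the argument is then a cone and Lyapunov-function estimate for $DH$ on $\Sigma_0 = \T^2\times\T^N$. I would introduce, at each point, a candidate unstable cone $\mathcal C^u$ spanned by $E_A^u$ (the Anosov unstable direction, expanded by $e^\lambda$ under $A$) together with small components in the $z$-fibre directions; and a stable cone containing $E_A^s$ and the remaining fibre directions near the attracting pole $z_i=0$, where by (A1) $(g_i^\tau)'\le c<1$ on $I_i^-$. The skew-product form makes the $DH$ matrix block lower-triangular: the $(x,y)$-block is $DA$ (uniformly hyperbolic), and the fibre block is the product of the diagonal $\mathrm{diff}(r_i)$ from $H_1$ and $\mathrm{diff}(g_i^\tau)$ from $H_2$, plus an off-diagonal coupling term (the $z$-dependence of $\tau$ through $c_\Phi$) that is small because $\Phi<1$ forces $\tau$ to vary only mildly. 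One then shows $DH(\mathcal C^u)\subset\mathcal C^u$ and vectors in $\mathcal C^u$ are expanded, at least on average along orbits; because of the exceptional set where $r_i'$ can be small, the expansion in the fibre is only \emph{nonuniform}, so I would set up a Lyapunov function / first-return-to-good-set argument (à la Benedicks--Carleson / Young tower, or simply an integrability estimate on $\log\|DH^{-1}|_{E^u}\|$) to control it. Here the constants enter: $\lambda\ge\lambda(N)$ is needed so that the Anosov expansion dominates the worst-case fibre contraction and the coupling from $N$ inhibitory units, and $\epsilon\le\epsilon(N,\lambda,g_i^t,\kappa)$ ensures the bad set (where $r_i'$ is small) has controllably small measure and the fibre map is genuinely expanding in the averaged sense.

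Granting the nonuniform hyperbolicity estimates, I would invoke the standard machinery: the map $H$ is piecewise $C^2$ with a discontinuity set consisting of finitely many codimension-one surfaces (the sets $\{x=0\}$ for $H_1$ and $\{z_i=0,\tfrac12\}$ for $H_2$), and crucially flowlines — hence $H$-orbits — do not cross $\{z_i=\tfrac12\}$ or $\{z_i=0\}$, so the discontinuities are compatible with building local unstable manifolds. Applying the Pesin theory / SRB-construction for piecewise hyperbolic maps with singularities (e.g.\ in the style of Katok--Strelcyn together with Ledrappier--Young, or Pesin's entropy formula approach), one pushes forward Lebesgue measure on a piece of unstable manifold, takes Cesàro averages, and extracts a limit measure $\nu$ for $H$ that has absolutely continuous conditional measures on unstable manifolds — an SRB measure for $H$ — with exactly one positive exponent (the one coming from $E_A^u$, plus the expanded fibre directions collapse onto it in the sense that the whole unstable bundle is one-dimensional after the flow direction is accounted for; more precisely the $(x,y)$ and $z$ expansions combine but the \emph{weak}-unstable manifold of the suspension is still the one carrying the density). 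Ergodicity follows from a Hopf-type argument using the transitivity of $A$ and the mixing of the fibre dynamics. Finally I would suspend: the return time to $\Sigma_0$ is the bounded function $\tau+b$ (between $1$ and $\tau_{\max}+b$), so $\nu$ induces a flow-invariant measure $\mu = (\nu\times\mathrm{Leb})/\!\int(\tau+b)\,d\nu$ on $M$; standard suspension arguments transfer the SRB property, the physical-measure property (Theorem \ref{Thm:ExistPhisicMeas}), ergodicity, and the Lyapunov spectrum (one positive exponent, one zero exponent in the flow direction, the rest negative) from $H$ to ${\bf F}^t$.

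The main obstacle I anticipate is the fibre hyperbolicity estimate in the presence of the E--I coupling: unlike a genuine skew-product driven by an \emph{a priori} fixed base sequence, here the return time $\tau$ — and hence the fibre map $g_i^\tau$ — depends on \emph{all} the $z_j$'s, so the fibre block of $DH$ is coupled across the $N$ inhibitory coordinates, and one must verify that the cone field and the averaged-expansion (Lyapunov-function) estimates survive this coupling with constants uniform in $N$. This is precisely what dictates the dependence $\lambda = \lambda(N)$ and is the delicate quantitative heart of the argument; the rest (suspension, Hopf argument, invoking the SRB-existence theorem for singular hyperbolic maps) is comparatively routine modulo checking the hypotheses of the cited theorems, in particular the integrability of the logarithm of the distance to the singularity set.
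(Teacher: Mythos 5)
Your overall skeleton---reduce to the first return map $H$, exploit the skew-product over $A$, build an invariant measure by pushing forward Lebesgue measure on unstable curves, invoke Pesin theory for piecewise-smooth maps, then suspend---matches the paper's program. But there is a conceptual misstep in your description of the hyperbolicity, and the paper's decisive lemma is absent from your plan.

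The misstep: you repeatedly describe the fibre dynamics as ``expanding'' and place small $z$-fibre components inside an unstable cone. In fact (A1)--(A4) are designed so that orbit mass concentrates near the \emph{attracting} poles $z_i=0$ (Proposition~\ref{Prop:ConcNearToSink}), and the fibre Lyapunov exponents are strictly \emph{negative} (Lemma~\ref{Lem:NegExp}). The unstable subspace is one-dimensional, sitting inside $E^u_A\oplus E_{z_1z_2}$ and generally tilted off $E^u_A$; the obstruction to a cone argument is precisely that by (A1) the fibre expansion near the repelling pole exceeds $e^\lambda$ and can drive $\angle(E^u,E_{z_1z_2})$ toward zero, which is why the paper uses Lyapunov charts instead of cones (Sect.~\ref{Sec:UnHypSys}). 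The missing idea: the paper does \emph{not} derive the SRB property by showing the pushed-forward measures converge to a stack of unstable leaves with bounded conditional densities---it explicitly declines that route because it would require controlling the lengths of the connected components of $\gamma_n$ and distortion along them. Instead it goes through entropy: since $H$ is a skew product over $A$ with all fibre Lyapunov exponents negative, a Ruelle--Wilkinson-type argument (Lemma~\ref{Lem:AtomFiber}) shows the conditional measures of $\nu$ on $\T^2$-fibres are purely atomic, so the fibre entropy vanishes and $h_\nu(H)=h_{m_{\T^2}}(A)=\lambda$, the sum of positive exponents. The converse of the Ledrappier--Young entropy formula, extended to piecewise-smooth maps whose invariant measure is not too concentrated near the singularity set (Theorem~\ref{Thm:EntFormSing}, with hypothesis (iii) supplied by Proposition~\ref{Prop:InvMeasBarH}), then gives the SRB property. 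You name ``Pesin's entropy formula approach'' in passing but do not identify the atomicity of fibre measures as the mechanism that actually makes the entropy formula hold, and you never supply the curve-fragmentation and distortion control that your geometric alternative would require. (Also, ergodicity is obtained by simply taking an ergodic component of the candidate measure; no Hopf argument is needed or used.)
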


\smallskip
Theorem \ref{Thm:ExistPhisicMeas} then follows from Theorem \ref{Thm:ExistSRBforflow} and the absolute continuity of the strong stable
foliation.

\bigskip
The proofs of Theorems  \ref{Thm:ExistPhisicMeas} and \ref{Thm:ExistSRBforflow} are contained in Sects. \ref{Sec:DistPushFor}-\ref{Sec:SRBProp}. For clarity of exposition, 
{\it the proof we present is written for $N=2$.} The case $N=1$ is simpler, and for 
$N \ge 3$, the proof is conceptually identical to that for $N=2$. However, some choices of constants
depend on $N$, the main reason being that the {singularity set} grows in complexity as $N$ increases. We will {identify the dependence on $N$} as we go along.  


{
\subsection{Modeling and simulations}\label{Sec:RelModNumSim}

As remarked in the Introduction, the models considered are not intended to be realistic
models of specific physical or biological systems, but they exhibit a few characteristics 
typical of such systems: Excitatory-inhibitory relations among constituent components, 
the activation of certain processes upon crossing of thresholds,
and the relaxation to equilibrium in the absence of excitatory input --- these properties
appear often in biological systems. 

\begin{figure}[h!]
\center
\includegraphics[scale=0.5]{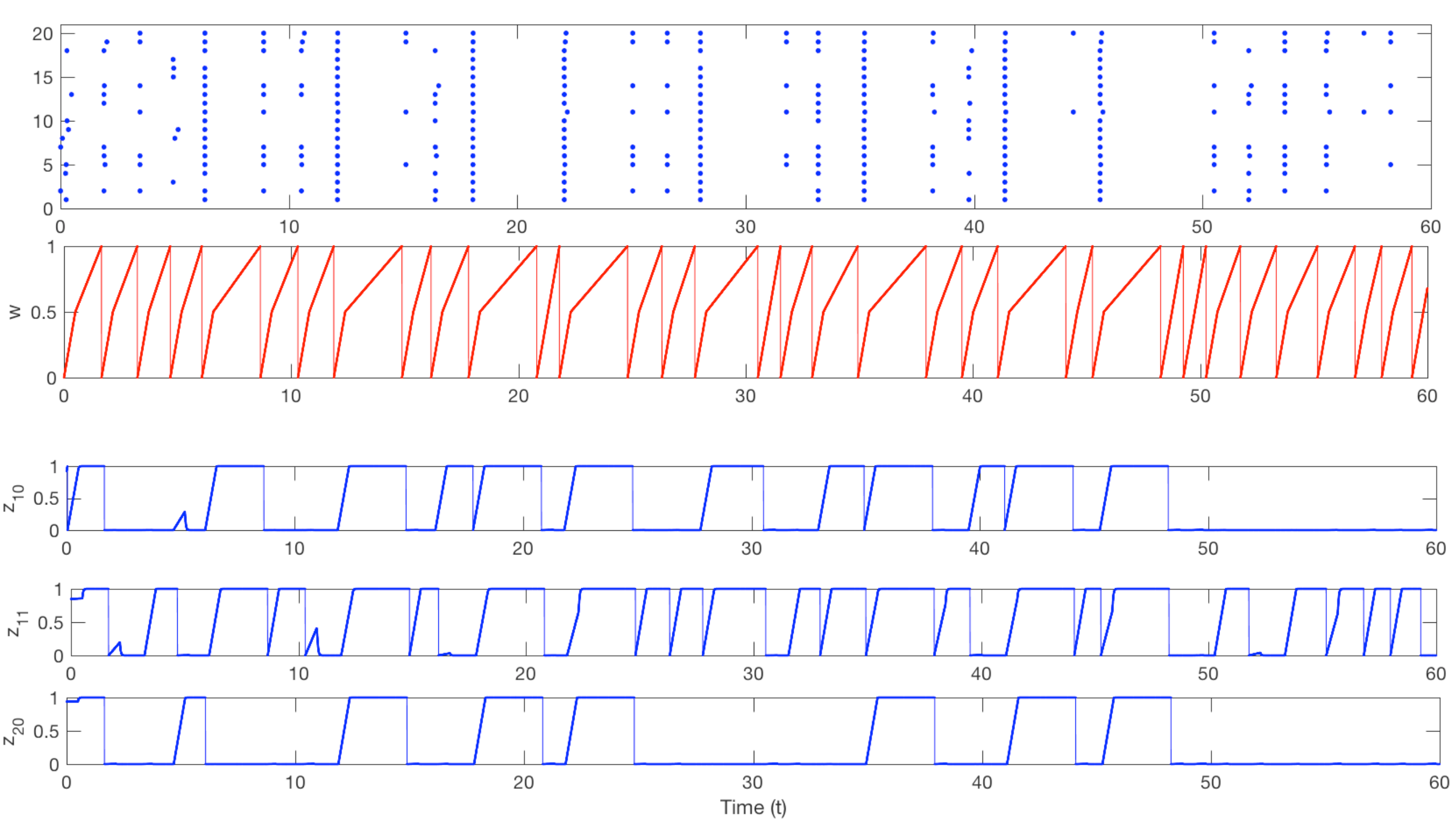}
\caption{Results for numerical simulations of a population of $N=20$ I-units with $v_g(z)=-7 \sin (2\pi z)$, $A=(10,3;3,1)$, $\Phi(n)=n^{-3/10}$, and where the functions $r_i$ are all different having degrees between 2 and 3. {The $x$-axis is time in all the panels.} The first panel shows the Raster plot of the ``activation" events. {The $y$-axis is the I-unit number.} A marker is added at $(t,i)$  when  I-unit number $i$ has coordinate crossing the value 0.5 at time $t$. The second panel shows the $w$ coordinate of the E-environment. The last three panels shows the orbits of I-units number 10 and 11 and 20. {Because their $r_i$ are different, distinct units}   exhibit different activation patterns.}
\label{Fig:RastIncInib}
\end{figure}

%
%
Under (A1)--(A4), the system is hyperbolic (though not uniformly so).
 In many dynamical systems that have come to symbolize chaotic behavior, such as
Anosov diffeomorphisms, billiards or standard maps, the landscape is dominated by 
clearly expanding and contracting directions -- defined everywhere or at least on large 
portions of the phase space. This is not the case in the network models described in 
Section \ref{Sec:Modelandmainres}. In these models, individual I-component are described by simple N-S flows 
with large perturbations mediated by other network components occurring at seemingly
random times. These events inject unpredictability into the system, leading to very rich 
and varied patterns of collective dynamical behaviors 
This is quite typical of 
 high dimensional hyperbolic systems with few unstable directions. Such systems
are not uncommon in real-world settings, and they may be relatively amenable to analysis.

{ In Figure \ref{Fig:RastIncInib} we present an illustrative example of a network model
of the type studied in this paper, to give a sense of its time evolution. Shown are snapshots
of the E-environment and three of the I-components on a randomly chosen time interval. These
snapshots were ``typical" to the degree that they were not chosen with specific characteristics 
in mind; in large dynamical systems, the patterns that can arise are infinitely rich and it is not 
clear what constitutes ``typical" behavior. The character of the dynamics depends on parameters, 
including the choices of $\Phi$, $v_g$, $A$, and especially $\{r_i\}$, the
collection of functions describing the amount the I-component is rotated.

Qualitative properties of some of these dependences can be  analyzed, e.g. since $z_i$ 
is expected to be near $0$ a good fraction of the time (this is what we will prove), 
rotating by $> \frac12$ is more likely to 
cause $z_i$ to cross $0.5$. In this way, one can identify which I-component is likely to 
activate more often. { Moreover components with similar $r_i$  tend to activate simultaneously.} Another observation is that if $\Phi$ is linear, then 
a constraint on the impact of each I-component 
 is that it should scale like $\sim \frac1N$ as system size $N$ 
 tends to infinity, to ensure that $\Phi(\sum_{i=1}^N \chi_{(\frac12, 1)(z_i)})<1$;
 see Equation (1) in Sect. 2.2, Paragraph D. { An interesting question is if, in the limit $N\rightarrow \infty$, the system can be described by a self-consistent operator (as defined e.g. in  \cite{selley2021linear} for systems in discrete time)  and what are the properties of this operator. }

While a complete characterization of the very rich dynamical behaviors of this model is 
likely out of reach, many aspects are amenable to analysis. We will not delve into such
an investigation here, however, as that would take us too far from
our goal of proving the results stated in Section 3.

 }

}

\section{Distribution of pushed-forward mass: a preliminary estimate}\label{Sec:DistPushFor}

{In Sections \ref{Sec:DistPushFor}--\ref{Sec:SRBProp}, unless declared otherwise, $N$ is assumed to be $2$.}

We will, for the most part, be working with the first return map 
$H:\Sigma_0 \circlearrowleft$ of the flow ${\bf F}^t$ defined in Sect. \ref{Sec:PrecModelDesc}. 
Here, $\Sigma_0 \cong \T^2 \times \T^2$ (with Euclidean norm); we use 
$(x,y;z_1, z_2)$ to denote its coordinates. 
Guessing that the bulk of pushed-forward mass is likely to accumulate near $\{z_1=z_2=0\}$, 
the S-poles of the 
fiber flows $(g^t_i, g^t_2)$ in $(z_1,z_2)$-space, so that the Lyapunov exponents of $H$ in
$(z_1,z_2)$-space are likely both negative, we seek to construct an SRB measure 
with 1D unstable manifolds. Our guess is informed by assumptions (A1)-(A4), which 
{were designed to ensure that  mass could not collect around the N-poles.}
 Guided by this intuition, we push forward Lebesgue measure 
on 1D curves roughly aligned with $E^u_A$, hoping to obtain an SRB measure in the limit.  

{The aim of this section is to confirm the hypothesis on mass concentration}.

%

\subsection{Setup and main proposition}\label{Sec:SetupControlGeomCurv}

Let $W^u_A \subset \T^2$ be a piece of unstable manifold for $A: \T^2 \circlearrowleft$,
  and let $\gamma_0:[0,a] \to \Sigma_0$ be a parametrization of $W^u_A \times \{0\}$ by arc length.
For $n \ge 1$, let $\gamma_n : [0, e^{\lambda n}a] \to \Sigma_0$ be defined by
$$
\gamma_n(u) = H^n(\gamma_0(e^{-\lambda n}u))\ ,
$$
so that if $\pi_{xy}$ denotes the projection of $\Sigma_0 \cong \T^2 \times \T^2$ to its first
$\T^2$-component, then $\pi_{xy} \circ \gamma_n$ is a parametrization of $A^n(W^u_A)$ 
by arc length.

\bigskip \noindent
{\bf Action of $H$ in $(u,z_1,z_2)$-space.}
In the analysis to follow, it will be convenient to view the action of $H$ in the
3D space $\R\times \T^2$ with coordinates $(u,z_1,z_2)$, where 
$u$ is arclength parametrization
of $A^n(W^u_A)$. Abusing notation slightly (to avoid excessive numbers of symbols), 
we view $\gamma_n$ as a curve in this space parametrized by 
$$\gamma_n = (u, \pi_{z_1}(\gamma_n(u)), 
\pi_{z_2}(\gamma_n(u)), \qquad  u \in [0, e^{\lambda n}a], $$
where $\pi_{z_i}$ has the obvious meaning,
 and study the evolution of $\gamma_n$ in this setting under the iteration of $H$. 
 
 To define the action of $H$, we need to express the amount of rotation as functions of $u$. 
We introduce a $\gamma_0$-dependent family of functions
$$\iota_n:[0, e^{\lambda n}a ]\rightarrow \T \ , \quad \quad
\iota_n(u) = \pi_x(\gamma_n(u))\ .
$$
Then $\iota_n(u)$ has the form 
$\iota_n(u)=\beta u+{\pi_x(\gamma_n(0))}$ where $\beta$ is the angle between the unstable direction 
$E^u_A$ and the $x$ direction and {$\pi_x(\gamma_n(0))$} is a  constant depending on $\gamma_n$.

The action of $H$ on $(u,z_1, z_2)$-space can now be written as
$\Phi:=\Phi_3\circ \Phi_2\circ \Phi_1$ where
\begin{align*}
\Phi_1&: \,(u;z_1,z_2)\mapsto (u; z_1+ r_1(\iota_n(u)), z_2+r_2(\iota_n(u)))\\
\Phi_2&:\,(u;z_1,z_2)\mapsto (u; g_1^{\tau_1(z_1,z_2)}(z_1), g_2^{\tau_2(z_1,z_2)}(z_2))\\
\Phi_3&:\,(u;z_1,z_2)\mapsto (e^\lambda u; z_1, z_2)
\end{align*}
This is the setup we will be working with. 

Observe that in this setup, the curves of interest are all graphs of functions in the variable $u$: The curve $\gamma_0$
is the graph of the function $\theta_0(u) \equiv (0,0)$. Suppose we show inductively that
$$\gamma_n = \mbox{graph}(\theta_n) \quad \mbox{for some} \quad
\theta_n=(\theta_n^1,\theta_n^2) :[0,e^{\lambda n}a ]\rightarrow \T^2 .
$$ 
Then $\Phi_1$ transforms the graph of $\theta_n$ to the graph of another function 
$\psi_n = (\psi_n^1,\psi_n^2)$, i.e. $\Phi_1(\mbox{graph}(\theta_n)) = \mbox{graph}(\psi_n)$,
where
$$
\psi_n(u) = \theta_n(u) + (r_1(\iota_n(u)), r_2(\iota_n(u))\ .
$$
Likewise, $\Phi_2$ transforms the graph of $\psi_n$ to the graph of another function 
$\zeta_n = (\zeta_n^1,\zeta_n^2)$ with
$$
\zeta_n(u) = (g_1^{\tau(\psi_n(u))}(\psi^1_n(u)), g_2^{\tau(\psi_n(u))}(\psi^2_n(u))),
$$
and $\Phi_3$ transforms the graph of $\zeta_n$ to the graph of $\theta_{n+1}$ with 
$$\theta_{n+1}(u) = \zeta_n(e^{-\lambda}u)\ .$$

As discussed at the beginning of this section, we are interested in where mass accumulates
in $(z_1, z_2)$-space starting from uniform measure on $\gamma_0$ and transporting mass
forward by $H$. Below we translate this statement into one about $\psi_n$. The symbols
$I^-_i$ and $\varepsilon$ refer to those in Sect. \ref{Sec:TecAssump}.

\begin{proposition}\label{Prop:ConcNearToSink} Assume $e^\lambda > 3$ and $\varepsilon>0$ is sufficiently small.
Then there exists $C_\#>0$ ({ dependent on $\kappa$, $c'$, $a$, $\beta$, and the N-S flows $g_i$}) such that  for all $n \in \N$ and  $i=1,2$,  
\[
\frac{1}{e^{\lambda n}a} \cdot m\{u \in [0, e^{\lambda n}a] : \psi^i_n(u) \in (I^-_i)^c\} 
< C_\# \varepsilon\ 
\]
where $m$ denotes Lebesgue measure on $\R$.
\end{proposition}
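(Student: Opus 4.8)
The plan is to prove the bound by induction on $n$, controlling at each step the normalised Lebesgue measure of the parameter set on which the curve has left a neighbourhood of the attracting pole. Write $L_n:=e^{\lambda n}a$ for the length of the $u$-interval carrying $\gamma_n$, and put $K_i:=\T\setminus g_i^{1-b}(I_i^+)$. By (A2) the set $K_i$ is an arc around $0$ contained in $I_i^-$ at distance $d_i$ from $\partial I_i^-$, so that $K_i+[-d_i,d_i]\subseteq I_i^-$, while $I_i^++[-d_i,d_i]$ is disjoint from $K_i$ (the latter because $I_i^+$ and $I_i^-$ are disjoint arcs). The quantities I would track are
\[
q_n:=\frac1{L_n}\,m\{u\in[0,L_n]:\theta_n^i(u)\notin K_i\},\qquad
p_n:=\frac1{L_n}\,m\{u\in[0,L_n]:\psi_n^i(u)\in I_i^+\},
\]
and the aim is to show $q_n,p_n\le C\varepsilon$ for all $n$ with $C$ depending only on $\kappa,c',a,\beta$ and the flows $g_i$; the proposition then follows from the reduction below.

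First I would reduce matters to bounding $q_n$. By (A3)--(A4) (cf. the worked example in Sect.~\ref{Sec:TecAssump}), $r_i$ maps all of $\T$, apart from a union of ``steep zones'' of total length $\le\kappa\varepsilon$ on which $r_i$ sweeps across the arc $(d_i,1-d_i)$ with slope $>\varepsilon^{-1}$, into the $d_i$-neighbourhood $[-d_i,d_i]$ of $0$. As $\iota_n$ is affine in $u$ with slope $\beta$, it covers $\T$ at most $\beta L_n+1$ times, so
\[
\frac1{L_n}\,m\{u:r_i(\iota_n(u))\notin[-d_i,d_i]\}\ \le\ (\beta+a^{-1})\,\kappa\,\varepsilon\ =:\ C_0\varepsilon .
\]
Since $\psi_n^i=\theta_n^i+r_i\circ\iota_n$, whenever $\theta_n^i(u)\in K_i$ and $r_i(\iota_n(u))\in[-d_i,d_i]$ one has $\psi_n^i(u)\in I_i^-$; hence $\frac1{L_n}m\{\psi_n^i\notin I_i^-\}\le q_n+C_0\varepsilon$, and it is enough to control $q_n$.

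The engine is the pair of one-step bounds $q_{n+1}\le p_n$ and $p_{n+1}\le\rho\,p_n+C_0\varepsilon$ with $\rho<1$. For the first, note that every $g_i^\tau$ with $\tau\in[1-b,\tau_{\max}]$ carries the arc $\T\setminus I_i^+$ monotonically into $g_i^{1-b}(\T\setminus I_i^+)=K_i$, so that $g_i^\tau(w)\notin K_i$ forces $w\in I_i^+$; applying this at level $n+1$, where $\theta_{n+1}^i(u)=g_i^\tau(\psi_n^i(e^{-\lambda}u))$, and using $L_{n+1}=e^\lambda L_n$, gives $q_{n+1}\le p_n$. For the second, I would split $\{\psi_{n+1}^i\in I_i^+\}$ according to whether $r_i\circ\iota_{n+1}\in[-d_i,d_i]$; the complementary part has measure $\le C_0\varepsilon L_{n+1}$. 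On the rest, $\theta_{n+1}^i\in K_i$ together with $r_i\circ\iota_{n+1}\in[-d_i,d_i]$ would give $\psi_{n+1}^i\in I_i^-$, contradicting $\psi_{n+1}^i\in I_i^+$; hence $\theta_{n+1}^i\notin K_i$ and $\theta_{n+1}^i\in I_i^++[-d_i,d_i]$. By the previous bound the set $\{\theta_{n+1}^i\notin K_i\}$ lies in the $e^\lambda$-rescaling of $\{\psi_n^i\in I_i^+\}$, on which $\theta_{n+1}^i=g_i^\tau(\psi_n^i)$ with $\psi_n^i\in I_i^+$; since $I_i^++[-d_i,d_i]\subseteq\T\setminus K_i\subseteq g_i^\tau(I_i^+)$ and $(g_i^\tau)'|_{I_i^+}>e^\lambda$ by (A1), the set $\{w\in I_i^+:g_i^\tau(w)\in I_i^++[-d_i,d_i]\}$ has measure $<e^{-\lambda}\cdot 2(\delta_i^++d_i)$. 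Combining this with bounded distortion of $\psi_n^i$ over $I_i^+$ (a constant depending on the $g_i$, $c'$ and $\kappa$) yields
\[
\frac1{L_{n+1}}\,m\{u:\psi_{n+1}^i(u)\in I_i^+,\ r_i(\iota_{n+1}(u))\in[-d_i,d_i]\}\ \le\ \rho\,p_n,\qquad \rho\asymp e^{-\lambda}\,\frac{\delta_i^++d_i}{\delta_i^+},
\]
and since $e^\lambda>3$ and $d_i$ is small relative to $\delta_i^+$ one has $\rho<1$. With the base cases $q_0=0$ (as $\theta_0^i\equiv0\in K_i$) and $p_0\le C_0\varepsilon$ (as $\psi_0^i=r_i\circ\iota_0\in I_i^+$ forces $r_i\circ\iota_0\in(d_i,1-d_i)$), iterating gives $p_n,q_n\le C_0\varepsilon/(1-\rho)=:C\varepsilon$ for all $n$, and the reduction above finishes the proof with $C_\#:=C+C_0$.

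The delicate point is the bounded-distortion input used in the contraction step: on a ``bad'' interval $\theta_n^i$ is itself a $g_i^\tau$-image of an earlier escaped piece, so controlling its density uniformly in $n$ requires an auxiliary induction on the geometry of the curves $\gamma_n$ in $(u,z_1,z_2)$-space --- bounding the number and slopes of their monotone branches and showing that a given piece of mass remains in $I_i^+$ for only geometrically few consecutive returns, so that the accumulated distortion stays summable. This is exactly the curve-geometry bookkeeping that the present subsection sets up, and it is where the dependence of the constants on $N$ (through $\kappa$ and the growing complexity of the singularity set) enters; the dependence of $\tau$ on $u$ through both $\psi_n^1$ and $\psi_n^2$ is a minor further nuisance, absorbed into the distortion constant.
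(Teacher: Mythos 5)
Your scheme (tracking the escaped mass $q_n$, $p_n$ and closing a recursion $p_{n+1}\le\rho\,p_n+C_0\varepsilon$) is a genuinely different route from the paper's, and it has a gap at exactly the point you flag: the contraction step needs the pushforward of normalised Lebesgue measure under $\psi_n^i$, restricted to $I_i^+$, to have density comparable (uniformly in $n$) to $p_n/|I_i^+|$, so that the short subinterval $(g_i^\tau)^{-1}(I_i^++[-d_i,d_i])\cap I_i^+$ captures only a fraction $\rho<1$ of that mass. This is a two-sided bounded-distortion statement, and the curve-geometry bookkeeping of Sect.~\ref{Sec:GeomItCurv} does not provide it: Lemma \ref{Lem:Propofpsiandlift}(i) gives only a uniform \emph{lower} bound $c_1$ on slopes, and Proposition \ref{Prop:RangeLiftsFundDomains} only an upper bound on the range. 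Combining those two yields at best an absolute density bound $\frac{1}{L_n}m\{\psi_n^i\in J\}\le C|J|$, which turns your contraction step into $p_{n+1}\le C_0\varepsilon+ C\,e^{-\lambda}\,2(\delta_i^++d_i)$ --- the second term is a constant independent of $\varepsilon$, so the recursion cannot produce an $O(\varepsilon)$ bound. Genuine distortion control is delicate here because the derivative recursion is affine, $(\psi_n^i)'=(g_i^\tau)'\cdot(\psi_{n-1}^i)'\,e^{-\lambda}+(\hat r_i\circ\iota_n)'$, rather than multiplicative, and because branches of $\psi_n^i$ are repeatedly cut by the jump discontinuities that the \emph{other} coordinate creates (Lemma \ref{Lem:Propofpsiandlift}(iv)); nothing in the section bounds the accumulated distortion along a branch. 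The reductions surrounding this step (the $K_i$ bookkeeping, $q_{n+1}\le p_n$, the base cases) are correct, but without the distortion input the argument does not close.

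The paper avoids measures-of-sets recursions altogether and instead proves a \emph{pointwise derivative} statement by induction: wherever $\psi_n^i(u)\in(I_i^-)^c$ one has $(\psi_n^i)'(u)\ge\beta\varepsilon^{-1}$. The induction splits on the position of $\psi_{n-1}^i(e^{-\lambda}u)$: if it lies in $I_i^+$, the huge slope is inherited and the factor $(g_i^\tau)'>e^\lambda$ from (A1) exactly compensates the $e^{-\lambda}$ rescaling; if it lies in $(I_i^+)^c$, then (A2) forces its $g_i^\tau$-image at least $d_i$ inside $I_i^-$, so escaping $I_i^-$ requires a rotation $>d_i$, and (A3) then supplies the slope $\beta\varepsilon^{-1}$ through $r_i'$. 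The measure estimate follows by multiplying the maximal length $|(I_i^-)^c|\beta^{-1}\varepsilon$ of each excursion into $(I_i^-)^c$ by the number of such excursions, which Proposition \ref{Prop:RangeLiftsFundDomains} bounds by $c_2e^{\lambda n}+2$ (this is where $e^\lambda>3$ is actually used). You may want to note that your Case-(b)-type dichotomy is already implicit in the paper's induction; recasting your argument at the level of derivatives rather than densities is what makes the distortion problem disappear.
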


This is the main result of this section.
Had $\psi^i_n$ been smooth and monotonically increasing, a natural way to 
estimate $m\{\psi^i_n(u) \in (I^-_i)^c\}$ for each $i$
would be to examine the derivative of $\psi^i_n$ and to 
count how many times its graph wraps 
around $\T$ as $u$ varies over the interval $[0, e^{\lambda n}a]$. 
But as we will show, the functions $\psi^i_n$
are neither continuous nor monotonic due to discontinuities in our model. 
In Sect. \ref{Sec:GeomItCurv}, we will investigate the extent to which they fail to be continuous
and monotonic. The proof of Proposition \ref{Prop:ConcNearToSink} is given in Sect. \ref{Sec:ProofMassConc}.

\subsection{Geometry of iterated curves}\label{Sec:GeomItCurv}

Starting from 
$\theta_0^1 \equiv 0$, we see from the definition of 
$\Phi:=\Phi_3\circ \Phi_2\circ \Phi_1$ that $\Phi_1$ and $\Phi_3$ will take a smooth
piece of graph that is monotonically increasing to another such piece of graph, and new
singularities can only be created as we pass from $\psi_n$ to $\zeta_n$, so we focus
below on the action of $\Phi_2$.

The following language will be used to facilitate the discussion: 
We say a function $h: \R \to \R$ is
``monotonic at $x$" if there is a neighborhood of $x$ on which $h$ is monotonic, 
$C^1$ at $x$ if there is a neighborhood on which $h$ is $C^1$, and
``$C^{1*}$ at $x$" if there is a neighborhood on which $h$ is $C^1$ except possibly at $x$.
We call $x$ a``singularity" of $h$ if it is not $C^1$ at $x$, and use $S(h)$ to denote the
set of singularities.

\begin{lemma}\label{Lem:Propofpsiandlift} The following hold for all $n \in \N$ and $i=1,2$:
\begin{itemize}
\item[(i)] The set $S(\psi^i_n)$ is finite, and 
$\exists \ c_1 >0$ { depending only on $c'$  and the N-S flow $g_i$ (independent of $n$)}
such that $(\psi^i_n)'(u) \ge c_1$ for all 
$u \not \in  S(\psi^i_n)$.
\end{itemize}
Suppose $\psi^i_n$, $i=1,2$, is $C^1$ (resp. $C^{1*}$) and monotonic at $\bar u$. 
\begin{itemize}
\item[(ii)] If $\psi^i_n(\bar u) \ne 0, \frac12$  for $i=1,2$, then
$\zeta^i_n$ is $C^1$ (resp. $C^{1*}$) and monotonic at $\bar u$.
\item[(iii)] If $\psi^1_n(\bar u) = 0$ or $\frac12$, then 
$\zeta^1_n$ is $C^{1*}$ and monotonic at $\bar u$.
\item[(iv)] If $\psi^1_n(\bar u) \ne 0, \frac12$ and $\psi^2_n(\bar u) = 0$ or $\frac12$, then 
$\zeta^1_n$ is discontinuous and not necessarily monotonic at $\bar u$.
\end{itemize}
Statements (iii) and (iv) hold with $i=1,2$ interchanged.
\end{lemma}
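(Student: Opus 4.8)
The plan is to analyze each of the three maps $\Phi_1,\Phi_2,\Phi_3$ separately and track how they affect the singularity set, the monotonicity, and a uniform lower bound on the derivative of each component curve. Since $\Phi_1$ only adds $r_i(\iota_n(u))$ — a $C^2$ function of $u$ with derivative $\ge c' \beta > 0$ by (A4) — and $\Phi_3$ is a linear rescaling in $u$, neither map creates new singularities and both preserve monotonicity; so all the substantive content concerns $\Phi_2$, which acts fiberwise-in-time via $z_i \mapsto g_i^{\tau(z_1,z_2)}(z_i)$ with a common time $\tau(\psi_n^1(u),\psi_n^2(u))$ that depends on \emph{both} coordinates through $c_\Phi$.

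\medskip\noindent\textbf{Step 1 (uniform derivative bound, part (i)).} I would prove (i) by induction on $n$. Write $\psi_n^i = \theta_n^i + r_i\circ\iota_n$; by the inductive hypothesis $\theta_n^i$ has the graph-of-function form and, after applying $\Phi_3$'s contraction, one checks $(\theta_{n+1}^i)'(u) = (\zeta_n^i)'(e^{-\lambda}u)\cdot e^{-\lambda}$. The chain rule on $\zeta_n^i(u) = g_i^{\tau(\psi_n(u))}(\psi_n^i(u))$ gives two contributions: the ``diagonal'' term $\partial_z g_i^{\tau}\cdot (\psi_n^i)'$ and the ``time-variation'' term $\big(\tfrac{d}{dt}g_i^t\big)\big|_{t=\tau}\cdot \big(\nabla\tau\cdot (\psi_n^1,\psi_n^2)'\big)$. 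The key point is that the map $z \mapsto g_i^t(z)$ for $t$ in the compact range $[1-b,\tau_{\max}]$ has derivative bounded below away from zero (it is a diffeomorphism of $\T$, smooth in $t$ on a compact set), and where $\psi_n^i$ enters $I_i^+$ the derivative is in fact $> e^\lambda$ by (A1) while on $(I_i^+)^c\cup I_i^-$ it is bounded below by some fixed constant; combining with (A4)'s lower bound $c'$ on $r_i'$ (hence a lower bound on $(\psi_n^i)'$ once $(\theta_n^i)' \ge 0$, which propagates by induction since $g_i^t$ is order-preserving), and absorbing the time-variation term (which is small relative to the diagonal term when $\varepsilon$ is small, or can simply be bounded), one gets $(\zeta_n^i)' \ge c_1 e^\lambda$ wherever it is $C^1$, so that $(\theta_{n+1}^i)' \ge c_1$, closing the induction with $c_1$ depending only on $c'$ and the flows $g_i$. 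Finiteness of $S(\psi_n^i)$ follows because each step adds only finitely many singularities, as established in the remaining parts.

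\medskip\noindent\textbf{Step 2 (case analysis for $\Phi_2$, parts (ii)--(iv)).} Fix $\bar u$ and suppose $\psi_n^i$ is $C^1$ (or $C^{1*}$) and monotonic there for $i=1,2$. The source of non-smoothness in $\Phi_2$ is that $\tau = (1-b)c_\Phi(z_1,z_2)^{-1}$, and $c_\Phi(z_1,z_2) = 1-\Phi(\chi_{(1/2,1)}(z_1)+\chi_{(1/2,1)}(z_2))$ is locally constant in $(z_1,z_2)$ \emph{except} when some $z_j$ crosses $0$ or $\tfrac12$, i.e.\ crosses $\partial(1/2,1)\subset\T$. For (ii): if $\psi_n^i(\bar u)\ne 0,\tfrac12$ for both $i$, then in a neighborhood of $\bar u$ the quantities $\chi_{(1/2,1)}(\psi_n^i(u))$ are constant, so $\tau$ is a $C^2$ (indeed constant-in-the-$\chi$-sense, hence as smooth as $\psi_n$) function of $u$ near $\bar u$, and $\zeta_n^i(u) = g_i^{\tau(u)}(\psi_n^i(u))$ is a composition of smooth maps, monotone because $g_i^t$ is order-preserving in $z$ and the time-derivative contribution is controlled (or: $\zeta_n^i$ inherits $C^1$/$C^{1*}$ and, by Step 1's derivative bound, strict monotonicity). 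For (iii): if $\psi_n^1(\bar u)\in\{0,\tfrac12\}$ but (implicitly) we look at $\zeta_n^1$, the only possible break is in the $\tau$-factor; but since \emph{every $g_i^t$ leaves $(\tfrac12,1)$ invariant} and $0,\tfrac12$ are fixed points of $g_1^t$, we have $g_1^t(\psi_n^1(\bar u)) = \psi_n^1(\bar u)$ independent of $t$, so the potentially-discontinuous time change has no effect on $\zeta_n^1$ at $\bar u$ itself — it acts trivially there — leaving $\zeta_n^1$ continuous and monotone, with a possible derivative mismatch from the one-sided values of $\tau$, hence $C^{1*}$. For (iv): if $\psi_n^1(\bar u)\ne 0,\tfrac12$ but $\psi_n^2(\bar u)\in\{0,\tfrac12\}$, then as $u$ passes $\bar u$ the value $\chi_{(1/2,1)}(\psi_n^2(u))$ jumps (since $\psi_n^2$ is monotone and crosses the endpoint), so $c_\Phi$ and hence $\tau$ jump, and since $\psi_n^1(\bar u)$ is \emph{not} a fixed point of $g_1^t$, the value $g_1^{\tau}(\psi_n^1(\bar u))$ genuinely differs on the two sides — giving a discontinuity in $\zeta_n^1$; monotonicity can fail because a jump in $\tau$ is an order-$O(1)$ jump in the argument-time, not controlled in sign relative to the local increase of $\psi_n^1$. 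The $i=1,2$ interchange is immediate by symmetry of the roles in $c_\Phi$ and $\tau$.

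\medskip\noindent\textbf{Main obstacle.} The delicate point is part (iv) — and the boundary between (iii) and (iv) — namely pinning down exactly when the common time change $\tau$, whose jumps are dictated by $\chi_{(1/2,1)}$ applied to the \emph{other} coordinate, does or does not produce a genuine discontinuity in a given $\zeta_n^i$. The clean resolution rests on the invariance of $(\tfrac12,1)$ under each $g_i^t$ together with $0,\tfrac12$ being fixed points: this is what makes a jump in $\tau$ invisible to $\zeta_n^i$ precisely when $\psi_n^i(\bar u)\in\{0,\tfrac12\}$ (case (iii)) and visible otherwise (case (iv)). I would also need to be careful that ``monotonic at $\bar u$'' for $\psi_n^i$, $i=1,2$, is enough to guarantee that $\chi_{(1/2,1)}(\psi_n^i(u))$ is constant on each one-sided neighborhood of $\bar u$ (true because a monotone function crosses a point at most once), which is what lets the one-sided smoothness analysis go through and yields $C^{1*}$ rather than worse. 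Everything else is bookkeeping with the chain rule and the compactness of the time interval $[1-b,\tau_{\max}]$ on which all derivatives of $g_i^t$ are uniformly bounded above and below.
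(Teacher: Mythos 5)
Your case analysis in Step 2 for parts (ii)--(iv) is correct and matches the paper's reasoning: the key mechanism is precisely that $c_\Phi$, hence $\tau$, is \emph{locally constant} away from the surfaces $\{z_i=0,\tfrac12\}$, and that $0,\tfrac12$ are fixed points of $g_i^t$, so a jump in $\tau$ is invisible to $\zeta_n^i$ exactly when $\psi_n^i(\bar u)\in\{0,\tfrac12\}$ and visible otherwise. The paper additionally gives an explicit example (with $\psi_n^2(\bar u)=\tfrac12$, $\psi_n^1(\bar u)\in(0,\tfrac12)$) showing the jump can be downward, which you gesture at but do not work out; that is fine.

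However, Step 1 has a genuine flaw. You write the chain rule for $\zeta_n^i = g_i^{\tau(\psi_n)}(\psi_n^i)$ as if there were a nonzero ``time-variation'' term $\partial_t g_i^t \cdot (\nabla\tau\cdot\psi_n')$ to ``absorb'' or control by taking $\varepsilon$ small. But $\tau$ is \emph{piecewise constant} in $(z_1,z_2)$ (it is a step function built from $\chi_{(1/2,1)}$), so $\nabla\tau\equiv 0$ wherever $\psi_n$ is $C^1$ and this term vanishes identically --- the bound $c_1$ does not and cannot depend on $\varepsilon$. More seriously, you then try to close the induction via ``$(\zeta_n^i)' \ge c_1 e^\lambda$, hence $(\theta_{n+1}^i)' \ge c_1$.'' There is no mechanism for the factor $e^\lambda$: one has $(\zeta_n^i)' = (g_i^\tau)'(\psi_n^i)\cdot(\psi_n^i)'$, and on $I_i^-$ assumption (A1) forces $(g_i^\tau)' \le c < 1$, so $\Phi_2$ genuinely \emph{contracts} slopes there and the inequality fails. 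The induction with the strong invariant $(\theta_n^i)' \ge c_1$ therefore does not close. The correct, and simpler, invariant is the one you mention parenthetically: $(\theta_n^i)' \ge 0$ propagates trivially (both $g_i^\tau$ and the linear rescaling preserve nonnegativity of slopes), and then the uniform lower bound is obtained at the $\Phi_1$ step alone, $(\psi_n^i)'(u) = (\theta_n^i)'(u) + (\hat r_i\circ\iota_n)'(u) \ge 0 + c'\beta$, with no need for the lower bound to survive $\Phi_2$ and $\Phi_3$. This is exactly what the paper means by ``$\Phi_2$ and $\Phi_3$ can only decrease the derivative by a finite [nonnegative-preserving] amount'': they never destroy nonnegativity, and the fresh $r_i'$ contribution at each step supplies the uniform lower bound $c_1$.
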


Once Item (i)  is proved, it will follow that for 
$\bar u \not \in S(\psi^i_n)$, if $\zeta^i_n$ is continuous at $\bar u$, then it is automatically monotonic;
if $\zeta^i_n$ is discontinuous at $\bar u$, then it either jumps up or down, and {a violation of monotonicity}
is created if and only if $\zeta^i_n$ jumps down.

\begin{proof} The dynamics on $\T^2=(z_1, z_2)$-space are generated by a single $C^2$
vector field $(v_{g_1}, v_{g_2})$, 
but $\T^2$ is divided into 4 quadrants by the lines $z_i=0, \frac12, \ i=1,2$, and 
singularities of $\Phi_2$ arise from the fact that the flow times $\tau$ in $g^\tau_i$ are different 
on these 4 quadrants. 

The assertion in (ii) follows immediately. Item (i) is proved inductively, using (ii), (A4), 
which guarantees a lower bound on $r_i'$, 
and the fact that $\Phi_2$ and $\Phi_3$ can only decrease the derivative of $\psi^i_n$ 
by a finite amount {}.

For (iii) and (iv), it is useful to picture a small piece of curve $u \mapsto \psi^i_n(u)$ in $\T^2$
defined for $u$ in a neighborhood of $\bar u$, 
and see how it is transformed to $u \mapsto \zeta^i_n(u)$ by applying the appropriate flow-maps.
In the scenario of (iii), different flow-times are applied to $u<\bar u$ and $u > \bar u$, but no
discontinuity is created because $0$ and $\frac12$ are fixed points of
the flow $g^t_1$. A jump discontinuity in $\zeta_n^1$ is created in the scenario in (iv). This jump 
can be up or down depending on the location of $\psi_n(\bar u)$: 
For example, if $\psi^2_n(\bar u) =\frac12$ and
$\psi^1_n (\bar u) \in (0,\frac12)$, then $\tau(\psi_n(u))$ for $u<\bar u$ is smaller than
 $\tau(\psi_n(u))$ for $u > \bar u$, and since $g^1_t$ is decreasing at $\bar u$, 
 the jump is downwards, i.e., $\zeta_n^1$ loses monotonicity at $\bar u$. Similar reasoning
will show that the jump is upwards (hence $\zeta^1_n$ retains its monotonicity) if 
we take $\psi^2_n(\bar u) =0$ and $\psi^1_n (\bar u)$ as before.
\end{proof}

As it turns out, isolated points of nondifferentiability are of no concern to us for as long
as the function is continuous at those points, i.e., the singularities created in scenario (iii)
are harmless. Our concern is (iv). Note also that discontinuities of $\zeta^1_n$ are 
determined not by $\psi^1_n$ but by $\psi^2_n$.

\medskip 
As downward jumps are unavoidable, we try to control the distances jumped.
Let $h : [0,b] \to \T$ be a piecewise
continuous, monotonically increasing function, i.e., there exist $0=d_0 < d_1 < \cdots < d_k = b$ such that 
$h$ is continuous and monotonically increasing on each interval $(d_{j-1}, d_j), j=1, \cdots, k$, 
and it has jump discontinuities at $d_1, \cdots, d_{k-1}$. Notice that no conditions are imposed on 
the relation between $h^-(d_j)$ and $h^+(d_j)$ where $h^\pm(d_j)$ denote the left
and right limits of $h$ at $d_j$. Given such an $h$, we let 
$\Gamma h : [0,b] \to \R$ be the function with the following properties:
\begin{itemize}
\item[(i)] $(\Gamma h)(u) = h(u)$ mod 1 for all $u \in [0,b]$;
\item[(ii)] $\Gamma h$ is monotonically increasing on $[0,b]$, i.e. $u_1 \le u_2 \implies
(\Gamma h)(u_1) \le (\Gamma h)(u_2)$ for all $u_1, u_2 \in [0,b]$;
\item[(iii)] $(\Gamma h)^-(d_j) < (\Gamma h)^+(d_j) < (\Gamma h)^-(d_j)+1$;
\item[(iv)] $(\Gamma h) (0) \in [0,1)$.
\end{itemize}
That is to say, $\Gamma h$ changes $h$ into a monotonically increasing function 
with jump sizes $<1$ while preserving the values of $h$ mod $1$. 
It is obvious that $\Gamma h$ is uniquely defined and has jump discontinuities at the
same locations as $h$.

For $i=1,2$ and all $n$, let
$$
\hat \theta^i_n = \Gamma(\theta^i_n), \qquad \hat \psi^i_n = \Gamma(\psi^i_n), \qquad
\hat \zeta^i_n = \Gamma(\zeta^i_n)\ .
$$
We let $\mc R(\hat \theta^i_n)$ denote the length of the shortest interval containing 
the image of $\hat \theta^i_n([0, e^{\lambda n}a])$; $\mc R(\hat \psi^i_n)$ and 
$\mc R(\hat \zeta^i_n)$ are defined analogously. We are interested especially in
$\mc R(\hat \psi^i_n)$, which as we will see is an overcount for the number of times
$\psi^i_n$ wraps around $\T$ (see the last paragraph of Sect. \ref{Sec:SetupControlGeomCurv}).

\begin{proposition}\label{Prop:RangeLiftsFundDomains}
Assume that $e^\lambda>3$. Then there is $c_2>0$ ({depending on $\kappa$, $\beta$, $a$, and $\lambda$}) such that for every $n\in\N$ and $i\in\{1,2\}$,
\begin{align}\label{Eq:UppBndRangepsi}
\mc R(\hat \psi^i_n)\le c_2e^{\lambda n}.
\end{align}
\end{proposition}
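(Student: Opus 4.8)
The plan is to track how $\mc R(\hat\psi^i_n)$ evolves under one step of the iteration $\psi_n \mapsto \zeta_n \mapsto \theta_{n+1} \mapsto \psi_{n+1}$ and show it satisfies a recursion of the form $\mc R(\hat\psi^i_{n+1}) \le e^{-\lambda}\mc R(\hat\psi^i_n) + (\text{bounded additive term})$, from which the bound $\mc R(\hat\psi^i_n) \le c_2 e^{\lambda n}$ follows by summing a geometric series (this is where $e^\lambda > 3$, or really just $e^\lambda > 1$, is used to get convergence; the stronger inequality presumably simplifies constants). The three transitions are handled separately. First, $\Phi_3$ multiplies the $u$-domain by $e^\lambda$ but does nothing to the $z_i$-coordinates, so $\mc R(\hat\theta^i_{n+1}) = \mc R(\hat\zeta^i_n)$; rescaling the domain does not change the range of the $\Gamma$-lifted function. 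Second, for $\Phi_2$: by Lemma \ref{Lem:Propofpsiandlift}, $\zeta^i_n$ is obtained from $\psi^i_n$ by applying flow-maps $g_i^{\tau}$ with $(1-b)\le\tau\le\tau_{\max}$, and since each $g_i^t$ is an orientation-preserving circle map, $\mc R(\hat\zeta^i_n) \le \mc R(\hat\psi^i_n) + (\text{number of downward jumps created})$. The downward jumps come only from scenario (iv) of Lemma \ref{Lem:Propofpsiandlift}, i.e. from $u$ where $\psi^j_n(u) \in \{0,\tfrac12\}$ for the \emph{other} index $j$, and each crossing of these two values by the monotonic (between its own singularities) function $\psi^j_n$ contributes at most one jump of size $< 1$. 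Hence the number of such jumps is controlled by $\mc R(\hat\psi^j_n)$ (rounded up), so $\mc R(\hat\zeta^i_n) \le \mc R(\hat\psi^i_n) + \mc R(\hat\psi^j_n) + O(1)$.

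Third, and this is the step requiring care, I pass from $\theta_{n+1}$ to $\psi_{n+1}$ via $\Phi_1$: $\psi_{n+1}(u) = \theta_{n+1}(u) + (r_1(\iota_{n+1}(u)), r_2(\iota_{n+1}(u)))$. So $\mc R(\hat\psi^i_{n+1}) \le \mc R(\hat\theta^i_{n+1}) + (\text{variation of } u\mapsto r_i(\iota_{n+1}(u)) \text{ over } [0,e^{\lambda(n+1)}a])$. Since $\iota_{n+1}(u) = \beta u + \text{const}$ wraps around $\T$ roughly $\beta e^{\lambda(n+1)}a$ times, and $r_i$ has degree at most $\kappa$, the map $u \mapsto r_i(\iota_{n+1}(u))$ is monotonic with total increase $\le \kappa\beta e^{\lambda(n+1)}a + O(\kappa)$; hence the additive term here is $O(\kappa\beta e^{\lambda(n+1)}a)$, which is \emph{not} $O(1)$ — it grows like $e^{\lambda n}$. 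This is the crux: the additive term at step $n$ is itself $\sim C e^{\lambda n}$, so combining with $\mc R(\hat\theta^i_{n+1}) = \mc R(\hat\zeta^i_n)$ and the $\Phi_2$ estimate, writing $R_n := \max_i \mc R(\hat\psi^i_n)$, one gets $R_{n+1} \le 2 R_n + C\kappa\beta e^{\lambda(n+1)}a + O(\kappa)$. Wait — that has the \emph{wrong} sign for a geometric-series argument: the homogeneous coefficient is $2 > 1$. The resolution must be that the $\Phi_3$ rescaling actually \emph{contracts} the range after one accounts for it correctly: rescaling the $u$-domain by $e^\lambda$ means that over a \emph{fixed} reference domain $[0,a]$, the function $\theta^i_{n+1}$ restricted there corresponds to $\zeta^i_n$ on $[0,e^{-\lambda}\cdot e^{\lambda n}a]$, i.e. the range accumulates more slowly per unit reference length. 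The correct bookkeeping is to set $\rho_n := e^{-\lambda n}\mc R(\hat\psi^i_n)$ (range per unit rescaled domain) and show $\rho_{n+1} \le e^{-\lambda}(2\rho_n + O(\text{lower order})) + C\kappa\beta a$; since $2e^{-\lambda} < 1$ when $e^\lambda > 2$ — and we have $e^\lambda > 3$, covering both indices — this is a genuine contraction and $\rho_n$ stays bounded, giving $\mc R(\hat\psi^i_n) \le c_2 e^{\lambda n}$.

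So the key steps, in order, are: (1) establish $\mc R(\hat\zeta^i_n) \le \mc R(\hat\psi^i_n) + \mc R(\hat\psi^{3-i}_n) + O(1)$ by counting scenario-(iv) downward jumps via Lemma \ref{Lem:Propofpsiandlift} and bounding their number by the wrapping count of $\hat\psi^{3-i}_n$; (2) note $\Phi_3$ is an isometry on the $z$-coordinates so $\mc R(\hat\theta^i_{n+1}) = \mc R(\hat\zeta^i_n)$, but it stretches the domain by $e^\lambda$; (3) bound $\mc R(\hat\psi^i_{n+1}) \le \mc R(\hat\theta^i_{n+1}) + \kappa\beta e^{\lambda(n+1)}a + O(\kappa)$ using that $r_i$ has degree $\le\kappa$ and $\iota_{n+1}$ is affine with slope $\beta$; (4) assemble these into a recursion for $\rho_n := e^{-\lambda n}\max_i\mc R(\hat\psi^i_n)$ of the form $\rho_{n+1} \le 2e^{-\lambda}\rho_n + (\text{const depending on }\kappa,\beta,a,\lambda)$, observe $2e^{-\lambda} < 1$ since $e^\lambda > 3$, and conclude $\rho_n$ is uniformly bounded, i.e. $\mc R(\hat\psi^i_n) \le c_2 e^{\lambda n}$ with $c_2 = c_2(\kappa,\beta,a,\lambda)$ as claimed.

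\textbf{Main obstacle.} The delicate point is step (1): correctly counting the downward jumps of $\zeta^i_n$. One must verify that each passage of $\psi^{3-i}_n$ through $\{0,\tfrac12\}$ creates at most a bounded number of discontinuities in $\zeta^i_n$, that scenario-(iii) singularities genuinely contribute nothing to $\mc R$ (they are removable for the purpose of the lift $\Gamma$ since the function stays continuous there), and that $\mc R(\hat\psi^{3-i}_n)$ — an \emph{overcount} of wrappings, as the paper notes — is indeed an upper bound for the relevant jump count rather than an undercount. Handling the finitely many singularities of $\psi^{3-i}_n$ itself (where it may fail monotonicity) inside this count, together with the book-keeping of the $+O(1)$ and $+O(\kappa)$ terms so they do not secretly grow with $n$, is the part that needs the most care.
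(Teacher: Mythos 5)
Your approach is essentially the paper's: track $\mc R(\hat\psi^i_n)$ through the three transitions $\Phi_1 \to \Phi_2 \to \Phi_3$, obtain a linear recursion, and deduce $\mc R(\hat\psi^i_n) = O(e^{\lambda n})$ because the inhomogeneous term (coming from the $r_i$-rotations, of size $\kappa\beta a e^{\lambda n}$) dominates the homogeneous growth. The bookkeeping via the normalized quantity $\rho_n = e^{-\lambda n}\mc R(\hat\psi^i_n)$ is equivalent to iterating the recursion directly, as the paper does.

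There is, however, a counting error in your step (1) that matters for the hypothesis $e^\lambda > 3$. When you bound the number of scenario-(iv) discontinuities created in $\zeta^i_n$, you argue that they are controlled by the number of crossings of $\{0, \tfrac12\}$ by $\psi^{j}_n$ (for the other index $j$), and you then bound this by $\mc R(\hat\psi^j_n) + O(1)$, giving $\mc R(\hat\zeta^i_n) \le \mc R(\hat\psi^i_n) + \mc R(\hat\psi^j_n) + O(1)$. But $\mc R(\hat\psi^j_n)$ counts the number of full wraps of $\psi^j_n$ around $\T$, and in each full wrap the function crosses $\{z_j=0\}$ once \emph{and} $\{z_j=\tfrac12\}$ once --- two crossings, hence up to two new jump discontinuities per unit of range. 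The correct bound is therefore $X^i_n \le 2\mc R(\hat\psi^j_n) + O(1)$, i.e.\ $\mc R(\hat\zeta^i_n) \le \mc R(\hat\psi^i_n) + 2\mc R(\hat\psi^j_n) + O(1)$, which yields (after maximizing over $i$) a homogeneous coefficient of $3$, not $2$, in the recursion. So the contraction requirement is $3 e^{-\lambda} < 1$, i.e.\ $e^\lambda > 3$ --- this is precisely why the paper assumes $e^\lambda > 3$, and your parenthetical remarks that $e^\lambda > 2$ or even $e^\lambda > 1$ would suffice are not supported by a correct count. The structure of the argument is otherwise sound, and the conclusion holds under the stated hypothesis.
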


\begin{proof} 
Let $\kappa$ be such that the degree {of $r_i$ is $\le \kappa$} for every $i$.

We claim that for $i=1,2$ and for $n \ge 1$, 
\begin{eqnarray*}
\mc R(\hat \theta^i_n) & = & \mc R(\hat \zeta^i_{n-1}), \qquad  
\mc R(\hat \psi^i_n) \ \le \ \mc R(\hat \theta^i_n) +  \kappa e^{\lambda n}a\beta + \kappa, \\
\mc R(\hat \zeta^i_{n})& \le  &\mc R(\hat \psi^i_n) + 1 + X_n^i
\end{eqnarray*}
where $X_n^i$ is the number of new discontinuities created as we go 
from $\psi^i_n$ to $\zeta^i_n$.
The first equality is clear, as $\theta^i_n$ is simply a rescaling of $\zeta^i_n$ in $u$.
The quantity $\kappa e^{\lambda n}a\beta$ comes from the fact that the rotation functions $r_i$ 
have degree bounded by $\kappa$, and $\beta$ is the modulus of the cosine of the angle between $W^u_A$ and the $x$-axis.
The ``$+\kappa$" in $\mc R(\hat \psi^i_n)$ and ``$+1$" in  $\mc R(\hat \zeta^i_n)$
are end point corrections: the amount rotated can be arbitrarily close to  $\kappa\lceil e^{\lambda n}a\beta\rceil$, i.e. the smallest integer $\ge e^{\lambda n}a\beta$  times $\kappa$,
while $g^t_i$ can increase the length of an arc by $\frac12$ at either end.

We claim that $X^1_n \le 2(\mc R(\hat \psi^2_n) +1)$.
This is because by Lemma \ref{Lem:Propofpsiandlift} (iv), discontinuities
are created only when $\psi^2_n$ crosses $z_2=0,\frac12$, and 
the number of times that can happen is $ \le 2(\mc R(\hat \psi^2_n) +1)$. 
Similarly, $X^2_n \le 2(\mc R(\hat \psi^1_n) +1)$.

Letting $\mc R(\hat \psi_n) = \max_{i=1,2} \mc R(\hat \psi^i_n)$ and similarly for
$\mc R(\hat \zeta_n)$, we deduce from the relations above that
\[
\mc R(\hat \zeta_{n}) \le 3 \mc R(\hat \psi_n) + 3 \le 3 (\mc R(\hat \zeta_{n-1}) +\kappa e^{\lambda n}a\beta
 + \kappa) + 3\ .
\]
Applying this relation recursively, we obtain
\begin{eqnarray*}
\mc R(\hat \zeta_{n}) & \le & 3\mc R(\hat \zeta_{n-1}) + 3 \kappa e^{\lambda n}a \beta+  3+3\kappa\\
& \le & 3^2\mc R(\hat \zeta_{n-2}) + \kappa a \beta (3 e^{\lambda n} + 3^2 e^{\lambda (n-1)})  +(3+1) (3+3\kappa) \\
& \le & \quad \cdots \\
& \le & 3^n \mc R(\hat \zeta_0) + 3 \kappa a\beta \sum_{i=0}^n 3^i e^{\lambda (n-i)} + ( 3+3\kappa) \sum_{i=0}^n 3^i\ .
\end{eqnarray*}
The first and third terms are $\mc O(3^n)$ as $n \to \infty$. 
Since $e^{\lambda} > 3$ is assumed, the middle term
is $\mc O(e^{\lambda n})$. It follows that $\mc R(\hat \zeta_{n}) = \mc O(e^{\lambda n})$,
from which the asserted statement follows.
\end{proof}

\begin{remark}
Notice that {while the condition $e^{\lambda} > 3$ in Proposition \ref{Prop:RangeLiftsFundDomains}
is sufficient for $N=2$, more stringent lower bounds for $\lambda$ are needed as $N$, 
the number of  I-components,
increases. This is because} going from $\psi^1_n$ to $\zeta^1_n$, jump discontinuities are created 
when $\psi^i_n$ crosses $0, \frac12$ for all $i \ne 1$. Thus more jump discontinuities are
created at each step for larger $N$, and a larger lower bound for $\lambda$ is required to beat 
the growth in number of singularities.
\end{remark}

\subsection{Proof of mass concentration}\label{Sec:ProofMassConc}

In addition to the results from Sect. \ref{Sec:GeomItCurv}, the proof below will rely heavily on Assumptions
(A1)-(A4) in Sect. \ref{Sec:TecAssump}, and the notation will be as in that section.

\begin{proof}[Proof of Proposition \ref{Prop:ConcNearToSink}] The proof consists of two steps.

\medskip
{ To ensure that mass is concentrated near S-pole,} 
our first step is to show that for $i=1,2$ and all $n \in \N$,
\begin{equation} \label{slope}
\mbox{if} \ u \not \in S(\psi^i_n) \mbox{ is such that } \psi^i_n(u) \in (I_i^-)^c, \ \mbox{ then } 
(\psi_n^i)'(u)\ge \beta\epsilon^{-1}\ .
\end{equation}
We fix $i$, and prove the claim by induction on $n$. 

{ The idea is to look { at  $u$ such that $\psi^i_n(u)$ is near N-pole and compare it with  
{ $\psi^i_{n-1}(e^{-\lambda} u)$}. 
If { $\psi^i_{n-1}(e^{-\lambda} u)$} was 
near N-pole}, then { $(\psi^i_{n-1})'(e^{-\lambda} u)$} 
was  large { by induction}, and there are no mechanisms
{that can decrease this derivative by too much going to $\psi_n^i(u)$}; if it was far from N-pole, then
it must have experienced a large rotation to get there at step $n$, and (A3) ensures that $r_i$ creates
a very steep slope for such rotations.

More precisely, assume that (\ref{slope}) is true for $n-1$, and let 
$u$ be as in (\ref{slope}).}
Then
\[
\psi_n^i(u)=g_i^{\tau(\psi_{n-1}(e^{-\lambda}u))}(\psi_{n-1}^i(e^{-\lambda}u))+\hat r_i(\iota_n(u))\ ,
\]
and since  $u \mapsto \tau(\psi_{n-1}(e^{-\lambda}u))$ is locally constant, we have
\[
(\psi_n^i)'(u) = (g_i^{\tau(\psi_{n-1}(e^{-\lambda}u))})'(\psi_{n-1}^i(e^{-\lambda}u)) \cdot
 (\psi_{n-1}^i)'(e^{-\lambda}u) \cdot e^{-\lambda}
+(\hat r_i\circ \iota_n)'(u)\ .
\]
We consider separately the following two cases.

\medskip \noindent
{\it Case (a):} $\psi_{n-1}^i(e^{-\lambda}u)\in I_i^{+}$. Here we claim to have
\[
(\psi_n^i)'(u) \ge e^{\lambda} \cdot \beta \epsilon^{-1}\cdot e^{-\lambda} = \beta\epsilon^{-1}\ .
\]
That $(g_i^{\tau(\cdots)})'(\psi_{n-1}^i(e^{-\lambda}u)) > e^\lambda$ follows from (A1) 
together with $\psi_{n-1}^i(e^{-\lambda}u)\in I_i^{+}$. That $(\psi_{n-1}^i)'(e^{-\lambda}u)
\ge \beta \varepsilon^{-1}$ follows from the induction hypothesis, and the $\hat r_i$-term
can be dropped because it is $>0$.

\medskip \noindent
{\it Case (b): $\psi_{n-1}^i(e^{-\lambda}u)\in (I_i^{+})^c$.} It follows from (A2) that 
$g_i^{\tau(\psi_{n-1}(e^{-\lambda}u))}(\psi_{n-1}^i(e^{-\lambda}u))$ is at least 
distance $d_i$ away from $\partial I_i^{-}$. In order for $\psi_n^i(u)$ to belong to $(I_i^{-})^c$,  
$\hat r_i(\iota_n(u))$ must be $>d_i$, so $r_i'(\iota_n(u))>\epsilon^{-1}$ by (A3). Since
$(\psi_{n-1}^i)' >0$, we have
\[
(\psi_n^i)'(u)\ge (\hat r_i\circ \iota_n)'(u)=\hat r_i'( \iota_n(u))\iota_n'(u)>\beta\epsilon^{-1}.
\]

\medskip
%

{ Our second step is to { use} the derivative information above { to bound} the number of
times the graph of $\psi^i_n$ meets the interval $(I_i^-)^c$. Here we have taken care
of the fact that $\psi^i_n$ is not monotonic by replacing it with $\hat \psi^i_n$ (see
Sect. 4.2).}
Define 
\[G^i_n = \{u\in [0, e^{\lambda n}] : \hat \psi^i_n(u) \in (I_i^-)^c + \Z\}\ .
\]
Thinking of the gaps at jumps as pieces of graph with infinitely large slopes, we see
that the number of components of $(I_i^-)^c + \Z$ reached by the range of $\hat \psi^i_n$ is
$\le \mc R(\hat \psi_n^i)+2$, and we have shown that 
$\mc R(\hat \psi_n^i) < c_2 e^{\lambda n}$ (Proposition \ref{Prop:RangeLiftsFundDomains}). If $J$ is a component of $(I_i^-)^c + \Z$,
then $|(\hat \psi^i_n)^{-1}(J)| \le |(I^{-})^c|\beta^{-1} \epsilon$  from Step 1. Altogether, we have
\[
\frac{1}{e^{\lambda n}a} \ m(G^i_n) < \frac{1}{e^{\lambda n}a} \cdot c_2 e^{\lambda n} \cdot
|({ I_i^{-}})^c|\beta^{-1} \epsilon < { C_\#}\varepsilon
\]
for some ${C_\#}$, completing the proof.
\end{proof}

\section{A candidate SRB measure for the return map}\label{Sec:CandidateSRB}

Assumptions (A1)--(A4) are in effect throughout.
We continue to develop the ideas outlined at the beginning of Sect. \ref{Sec:DistPushFor}, namely to construct
a candidate SRB measure for the first return map $H:\Sigma_0 \circlearrowleft$ of the flow 
${\bf F}^t$ by pushing forward Lebesgue measure on curves with a component in 
the unstable direction. Let $\gamma_0$ be as defined in Sect. \ref{Sec:SetupControlGeomCurv},
and let $m_{\gamma_0}$ be Lebesgue measure on $\gamma_0$. Assuming $a=1$ 
so $m_{\gamma_0}$ is a probability measure, we let 
\[
\mu_n := \frac{1}{n} \sum_{i=0}^{n-1} H^n_*(m_{\gamma_0})\ .
\]

\subsection{Limit points of $\mu_n$ and relation to singularities}\label{Sec:LimiRelSing}

The main obstacle to concluding any limit point of $\mu_n$ is an $H$-invariant probability measure
with desirable properties is the presence of singularities, so that is what we will focus on.

Recall that $H=H_2 \circ H_1$ (Sect. \ref{Sec:PrecModelDesc}), and that $H_1: \Sigma_0 \to \Sigma_{b}$
is a diffeomorphism whereas $H_2: \Sigma_{b} \to \Sigma_0$ is piecewise smooth
with discontinuities. Let $\mc S(\cdot)$ denote the singularity set of a map. 
Then
\[
\mc S(H_2) = \sigma_{b}\times S
\]
where $S:=S_1\cup S_2$, $S_i:=\{z_i=0, \frac12\}{ \subset{\T^2}}$, and 
$\sigma_{b} := {\T^2\times\{b\}\subset  M_{f}}$.
Because $\mc S(H_2)$ has a simpler geometry than 
$\mc S(H) = H_1^{-1}\mc S(H_2)$, and $\mc S(\bar H) = \mc S(H_2)$ for
$\bar H = H_1 \circ H_2$, it is simpler to work 
with $\bar H$. In the rest of Sect. \ref{Sec:LimiRelSing},
we will consider 
\begin{equation}\label{Eq:SeqMeasures}
\bar \mu_n:=\frac{1}{n}\sum_{i=0}^{n-1}\bar H_*^{i} ({H_1}_*m_{\gamma_0})\ , \quad
n=1,2,\cdots.
\end{equation}
 The $(H_1^{-1})_*$-image of any weak$^*$-accumulation point of 
$\{\bar \mu_n\}$ is clearly an accumulation point of $\{\mu_n\}$.

\medskip
We show below that  weak$^*$-accumulation points of $\{\bar \mu_n\}$, which
 exist in the space of all Borel probability measures by compactness, are invariant measures
of $\bar H$ with controlled properties near its singularity set.
 For $X\subset \T^m$ and $\epsilon>0$, we denote by $X_\epsilon$ the 
$\epsilon$-neighborhood of $X$.

\begin{lemma}\label{Prop:MeasureAroundSing} There exists $c_3>0$ ({ with the same dependencies of $c_1$ and $c_2$}) such that 
if $\{\bar \mu_n\}_{n\in\N}$ is as defined in \eqref{Eq:SeqMeasures}, then
for all ${\xi}>0$,
\begin{equation}\label{Eq:PropMun}
\bar \mu_n ({\sigma_{b}}\times S_{{\xi}})\le c_3 {\xi}\ .
\end{equation}
\end{lemma}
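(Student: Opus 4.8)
The plan is to reduce the estimate \eqref{Eq:PropMun} to a statement about the pushed-forward curves $\gamma_n$ — more precisely about the functions $\psi_n^i$ — and then invoke Proposition \ref{Prop:ConcNearToSink}. The key observation is that $\bar\mu_n$ is built by averaging pushforwards of Lebesgue measure $m_{\gamma_0}$ along $\gamma_0$ under $\bar H$, and each $\bar H_*^i({H_1}_* m_{\gamma_0})$ is supported on an iterated curve which, in the $(u,z_1,z_2)$-coordinates of Sect. \ref{Sec:SetupControlGeomCurv}, is the graph of a function. Tracing through the factorization $\bar H = H_1 \circ H_2$ and the definitions of $\Phi_1,\Phi_2,\Phi_3$, one sees that after $i$ applications of $\bar H$ the curve ${H_1}_* m_{\gamma_0}$ lands precisely on the graph of $\psi_i$ (this is what $\Phi_1$ does to $\gamma_i = \mathrm{graph}(\theta_i)$, and $\bar H = H_1 \circ H_2$ applies $H_1$, i.e. the $\Phi_1$-type rotation, last). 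So the $z$-marginal of $\bar H_*^i({H_1}_* m_{\gamma_0})$, restricted to the $\sigma_b$-factor, is the image of normalized Lebesgue measure on $[0, e^{\lambda i}a]$ under $u \mapsto (\psi_i^1(u), \psi_i^2(u))$.

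\textbf{Key steps.} First I would make the identification above precise: unwind $\bar\mu_n = \frac1n \sum_{i=0}^{n-1} \bar H_*^i({H_1}_* m_{\gamma_0})$ and check that the $i$-th term projects (under $\pi_{z_1,z_2}$ composed with the obvious inclusion $\Sigma_0 \cong \sigma_b \times \T^2$... more carefully, using that $\mc S(\bar H) = \sigma_b \times S$ lies in the target $\Sigma_b$ of $H_1^{-1}$, equivalently the domain of $H_2$) to the law of $(\psi_i^1, \psi_i^2)$ evaluated on uniform $u \in [0, e^{\lambda i}a]$. Second, I would observe that the event $\{(z_1,z_2) \in S_\xi\}$ — recall $S = S_1 \cup S_2$ with $S_i = \{z_i = 0, \tfrac12\}$ — is contained in $\{z_1 \in (S_1)_\xi\} \cup \{z_2 \in (S_2)_\xi\}$, so it suffices to bound, for each $i \in \{1,2\}$ and each iterate, the Lebesgue proportion of $u$ with $\psi_i^j(u)$ within $\xi$ of $\{0,\tfrac12\}$. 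Third — and this is where Proposition \ref{Prop:ConcNearToSink} enters — since $I_i^-$ is a fixed neighborhood of $0$, for $\xi$ small the set $\{\psi_i^j(u) \in ((S_j)_\xi \cap I_j^-)\}$ is controlled by the derivative lower bound $(\psi_n^j)'(u) \ge \beta \epsilon^{-1}$ from Step 1 of the proof of Proposition \ref{Prop:ConcNearToSink}... wait, that bound only holds \emph{outside} $I_j^-$. Inside $I_j^-$, near $z_j = 0$, the function $\psi_i^j$ need not be steep, so I would instead split: the part near $z_j = \tfrac12$ lies in $(I_j^-)^c$ and is handled directly by Proposition \ref{Prop:ConcNearToSink} (giving a bound $C_\# \epsilon$, hence in particular $O(\xi)$ once $\epsilon$ is fixed — but I want linear-in-$\xi$, so I need the finer count); and the part near $z_j = 0$ requires using the slope estimate \eqref{slope} together with the wrapping count $\mc R(\hat\psi_i^j) \le c_2 e^{\lambda i}$ from Proposition \ref{Prop:RangeLiftsFundDomains}, to say that $\hat\psi_i^j$ crosses each translate $(S_j)_\xi + \mathbb Z$ at most $\sim \mc R(\hat\psi_i^j)$ times, and each crossing of the $\xi$-neighborhood of an integer contributes $u$-measure $\le 2\xi \beta^{-1} c_1^{-1}$ by the uniform lower bound $(\psi_i^j)' \ge c_1$ from Lemma \ref{Lem:Propofpsiandlift}(i) valid \emph{everywhere} off the (finite) singularity set. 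Multiplying: proportion $\le \frac{1}{e^{\lambda i} a}\cdot (c_2 e^{\lambda i} + 2)\cdot 2\xi \beta^{-1} c_1^{-1} \le c_3 \xi$. Averaging over $i = 0, \dots, n-1$ preserves the bound, giving \eqref{Eq:PropMun}.

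\textbf{Main obstacle.} The delicate point is that the steep-slope estimate \eqref{slope} is only available on $(I_i^-)^c$, whereas the set $S_i$ includes the sink $z_i = 0$ sitting \emph{inside} $I_i^-$, where $\psi_i^j$ is not guaranteed to be steep — near the sink the curve can linger. So the bound near $z_j = 0$ must come purely from the \emph{everywhere}-valid lower bound $(\psi_i^j)' \ge c_1$ of Lemma \ref{Lem:Propofpsiandlift}(i) combined with the wrapping count of Proposition \ref{Prop:RangeLiftsFundDomains}: the graph of $\hat\psi_i^j$ has total $z$-range $\le c_2 e^{\lambda i}$, hence meets the $\xi$-neighborhood of the integer lattice in at most $O(e^{\lambda i})$ arcs, each of $u$-length $\le 2\xi/(\beta c_1)$; the discontinuities (the ``gaps at jumps'', treated as infinite-slope segments, plus the finitely many in $S(\psi_i^j)$) only add a bounded correction to the arc count. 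The constant $c_3$ then inherits dependencies on $c_1, c_2$ (hence on $\kappa, \beta, a, \lambda, c', g_i$) exactly as asserted, and — crucially — the bound is uniform in $n$ because it holds termwise in the Cesàro average. I expect the bookkeeping around jump discontinuities and the precise relation between $\psi_i$ and the $i$-th pushforward of ${H_1}_* m_{\gamma_0}$ to be the only genuinely fiddly parts; everything else is a direct assembly of the three propositions already proved.
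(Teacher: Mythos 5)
Your proposal is correct and follows essentially the same route as the paper: express $\bar H_*^i({H_1}_*m_{\gamma_0})$ restricted to $\sigma_b\times(S_j)_\xi$ as the $u$-measure of $(\hat\psi_i^j)^{-1}((S_j)_\xi+\Z)$ normalized by $e^{\lambda i}$, then combine the everywhere-valid lower bound $(\psi_i^j)'\ge c_1$ from Lemma~\ref{Lem:Propofpsiandlift}(i) with the wrapping count $\mc R(\hat\psi_i^j)\le c_2 e^{\lambda i}$ from Proposition~\ref{Prop:RangeLiftsFundDomains} — and you correctly recognize that the $\beta\epsilon^{-1}$ slope estimate of \eqref{slope} is unusable near $z_j=0$, which is precisely why the paper uses $c_1$ rather than the steeper bound. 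One small slip: the extra factor $\beta^{-1}$ in your per-crossing estimate $2\xi\beta^{-1}c_1^{-1}$ is spurious, since $(\psi_i^j)'\ge c_1$ is already a bound with respect to the arclength variable $u$ (the $\beta$ from $\iota_n'(u)=\beta$ is already absorbed into $c_1$), so the preimage of a length-$2\xi$ interval has $u$-measure at most $2\xi/c_1$; this only affects the constant, not the conclusion.
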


\begin{proof}
We estimate $\bar \mu_n( \sigma_{b}\times(S_i)_{{\xi}})$ for $i=1$. The case $i=2$ is analogous.

Recalling the definition of $\psi_n$, one can see that {
\[
 \bar H_*^i ({H_1}_*m_{\gamma_0})\left(\sigma_{b}\times (S_1)_{{\xi}}\right)=\frac{1}{e^{\lambda i}}m\left(({\hat \psi_i^1})^{-1}(S_1)_{{\xi}}\right).
\] 
}
By point i) of Lemma \ref{Lem:Propofpsiandlift} $(\psi_i^1)'\ge c_1>0$ where $\psi_i^1$ is differentiable, therefore the preimage of each copy of $ (S_1)_{{\xi}}$ in one fundamental domain on the lift has measure at most $2{\xi} c_1^{-1}e^{-\lambda i}$.
By Proposition \ref{Prop:RangeLiftsFundDomains}, ${\hat \psi_i^1}$ spans at most $c_2e^{\lambda i}$ fundamental domains. The result is proved with $c_3 = 4c_2 c_1^{-1}$.
\end{proof}

\begin{proposition}\label{Prop:InvMeasBarH} Under Assumptions (A1)--(A4), 
any accumulation point $\bar \mu$ of the sequence in  \eqref{Eq:SeqMeasures} is 
an invariant probability measure for $\bar H$, and it satisfies 
\begin{equation}\label{Eq:ErgCompSingMeas}
\bar \mu\left({\sigma_{b}}\times S_{{\xi}}\right)\le c_3{\xi}
\end{equation}
for every ${\xi}>0$, where $c_3$ is as in Lemma \ref{Prop:MeasureAroundSing}.
\end{proposition}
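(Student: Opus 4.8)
The plan is to run the standard weak$^*$-limit argument for producing invariant measures, the only complication being that $\bar H$ is merely piecewise continuous; the $n$-uniform estimate of Lemma \ref{Prop:MeasureAroundSing} is exactly what is needed to deal with this. Existence of accumulation points is automatic since $\Sigma_{b}$ is compact, and a weak$^*$ limit of probability measures is a probability measure. For the estimate \eqref{Eq:ErgCompSingMeas}: the set $\sigma_{b}\times S_{\xi}$ is open in $\Sigma_{b}$, so if $\bar\mu_{n_k}\to\bar\mu$, then lower semicontinuity of mass on open sets together with \eqref{Eq:PropMun} gives $\bar\mu(\sigma_{b}\times S_{\xi})\le\liminf_k\bar\mu_{n_k}(\sigma_{b}\times S_{\xi})\le c_3\xi$. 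Letting $\xi\downarrow0$ shows $\bar\mu(\sigma_{b}\times S)=0$, so $\bar H$ is defined and smooth $\bar\mu$-almost everywhere and $\bar H_*\bar\mu$ makes sense. The same computation as in the proof of Lemma \ref{Prop:MeasureAroundSing}, applied to a single iterate $\bar H_*^{i}({H_1}_*m_{\gamma_0})$, shows that each of these also gives zero mass to $\sigma_{b}\times S$; hence every $\bar\mu_n$ charges $\sigma_{b}\times S$ with mass zero and all iterates in \eqref{Eq:SeqMeasures} are legitimately defined.

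It then remains to check $\bar H_*\bar\mu=\bar\mu$. Fix a continuous $h:\Sigma_{b}\to\R$ and $\delta>0$, and choose $\xi>0$ with $2\|h\|_\infty c_3\xi<\delta/4$. Since $h\circ\bar H$ is continuous on the closed set $\Sigma_{b}\setminus(\sigma_{b}\times S_{\xi})$, Tietze's extension theorem provides a continuous $h_\xi:\Sigma_{b}\to\R$ agreeing with $h\circ\bar H$ off $\sigma_{b}\times S_{\xi}$ and with $\|h_\xi\|_\infty\le\|h\|_\infty$. Then \eqref{Eq:PropMun} and \eqref{Eq:ErgCompSingMeas} give
\[ \Big|\int h_\xi\,d\bar\mu_n-\int h\circ\bar H\,d\bar\mu_n\Big|\le 2\|h\|_\infty\,\bar\mu_n(\sigma_{b}\times S_{\xi})\le\tfrac{\delta}{4},\qquad\Big|\int h_\xi\,d\bar\mu-\int h\circ\bar H\,d\bar\mu\Big|\le\tfrac{\delta}{4}\ . \]
Along the subsequence realizing $\bar\mu$, weak$^*$ convergence gives $\int h_\xi\,d\bar\mu_{n_k}\to\int h_\xi\,d\bar\mu$ and $\int h\,d\bar\mu_{n_k}\to\int h\,d\bar\mu$, while telescoping the average in \eqref{Eq:SeqMeasures} yields
\[ \Big|\int h\circ\bar H\,d\bar\mu_n-\int h\,d\bar\mu_n\Big|=\frac1n\Big|\int h\,d\big(\bar H_*^{n}{H_1}_*m_{\gamma_0}\big)-\int h\,d\big({H_1}_*m_{\gamma_0}\big)\Big|\le\frac{2\|h\|_\infty}{n}\xrightarrow[n\to\infty]{}0 \]
(using that ${H_1}_*m_{\gamma_0}$ is a probability measure, since $a=1$). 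Chaining these four estimates along $n_k$ gives $|\int h\,d\bar\mu-\int h\circ\bar H\,d\bar\mu|\le\delta$; since $\delta$ and $h$ were arbitrary, $\bar H_*\bar\mu=\bar\mu$ by the Riesz representation theorem.

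The only genuine obstacle is the discontinuity of $\bar H$: a weak$^*$ limit of empirical averages of pushforwards is automatically invariant for continuous maps, but here one must first rule out the limit measure concentrating spurious mass on the singularity set. Without the $n$-uniform bound $\bar\mu_n(\sigma_{b}\times S_{\xi})\le c_3\xi$ one could not trade $h\circ\bar H$ for the continuous surrogate $h_\xi$ with an error that is small uniformly in $n$, and the argument would break down. This is exactly why Lemma \ref{Prop:MeasureAroundSing} was proved first; everything else is routine.
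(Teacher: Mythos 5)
Your proof is correct and follows essentially the same route as the paper's: the key insight in both is that the $n$-uniform bound of Lemma \ref{Prop:MeasureAroundSing} allows one to approximate $h\circ\bar H$ by a continuous function up to an error controlled by the small mass near $\sigma_b\times S$, then run the standard Krylov--Bogolyubov telescoping. The paper packages this by introducing the class $\mc M_S$ of measures obeying the bound, showing it is weak$^*$-closed (your Portmanteau step), and proving $\bar H_*$ acts continuously on it via a Urysohn cutoff $\rho$ supported away from the singularity set; you instead establish the bound on $\bar\mu$ first and then use a Tietze extension $h_\xi$ of $h\circ\bar H$. These are interchangeable devices, and your ordering of the two conclusions is the cleaner of the two.
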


\begin{proof} Let $\mc M$ be the set of all Borel probability measures on $\Sigma_{b}$,
and let 
\[
\mc M_{S}:=\left\{\nu \in \mc M :\, \nu\left(\sigma_{b}\times S_{{\xi}}\right)\le c_3{{\xi}}\,\,\,\forall {\xi}>0\right\}.
\]
Then by Lemma \ref{Prop:MeasureAroundSing}, $\bar \mu_n \in \mc M_S$. To use the standard Krylov-Bogolyubov argument 
to prove that any accumulation point of $\{\bar \mu_n\}$ is fixed by $\bar H_*$, it suffices to show that
$\bar H_*$ acts continuously on $\mc M_S$.
Let $\nu_n \in \mc M_S$ be such that $\nu_n$ converges to $\nu$ in the 
weak$^*$ topology, and fix a continuous function $\phi:\Sigma_{b}\rightarrow \R$.
We will show $\int \varphi d(\bar H_*\nu_n) \to \int \varphi d(\bar H_*\nu)$.

Given an arbitrarily small ${\delta}>0$, 
let ${\xi}>0$ be small enough  so that  
  \begin{equation}\label{Eq:ChoiceOfEps}
 c_3{\xi}\sup_{p\in\Sigma_{b}}|\phi(p)|\le \frac{\delta}{2}.
 \end{equation}
Let $U_1=({\sigma_{b}}\times S_{{\xi}/2})^c$,  $U_2:=({\sigma_{b}}\times S_{{\xi}})^c$,  $U_1':=\bar H U_1$ and $U_2':=\bar H U_2$. Then $U_1'$ and $U_2'$ are 
closed sets with $U_2' \subset \mbox{int}(U_1')$. Let ${\rho}:\Sigma_{b}\rightarrow [0,1]$ 
be a continuous function with ${\rho}|_{U_2'}=1$ and ${\rho}|_{{U_1'}^c}=0$. 
Since the support of ${\rho}$ is contained in $U_1'$ and that of $(1-{\rho})$ is contained
in $(U'_2)^c$, we have, by the injectivity of $\bar H$,
\begin{equation} \label{supp}
\mbox{supp}({\rho}\circ \bar H) \subset U_1, \qquad \mbox{supp}((1-{\rho})\circ \bar H) 
\subset U_2^c={\sigma_{b}}\times S_{{\xi}}\ .
\end{equation} 
Writing
\[
 \int_{\Sigma_{b}} \phi d\bar H_*\nu_n = 
\int_{\Sigma_{b}}(((1-{\rho})\phi)\circ \bar H) d\nu_n
 +\int_{\Sigma_{b}}(({\rho}\phi)\circ \bar H) d \nu_n,
 \]
we have that the first integral on the right is $< \frac12 {\delta}$ by (\ref{Eq:ChoiceOfEps}) and (\ref{supp}).
Since the support of ${\rho} \circ  \bar H$ is bounded away from ${\sigma_{b}}\times S$,
the second integral converges to $\int ({\rho} \varphi) \circ \bar H d\nu$ as $n \to \infty$.
It follows that 
\[
\left| \int \phi d(\bar H_*\nu_n) - \int \phi d(\bar H_*\nu) \right|\le {\delta}
 \]
for all large $n$, proving the convergence claimed.

To complete the proof, observe that $\mc M_S$ is closed and therefore every 
accumulation point belongs to $\mc M_S$.
\end{proof}

\medskip
The results above imply the following: Any limit point $\mu$ of $\mu_n$ where $\mu_n$ is as defined
at the beginning of Sect. \ref{Sec:CandidateSRB} is an invariant probability measure of $H$. From here on
let $\mc S = \mc S(H)$ denote the singularity set of $H$. Then it follows from Proposition \ref{Prop:InvMeasBarH}
that $\mu(\mc S_{\varepsilon_1}) <$ const $\varepsilon_1$
for every $\varepsilon_1>0$.

\subsection{Lyapunov exponents of $(H, \mu)$}\label{Sec:LEHmu}

In this section we study the Lyapunov exponents (LE) of $(H, \mu)$ where 
$H:\Sigma_0 \circlearrowleft$ is the first return map and $\mu$ is a limit
point of the sequence of measures $\mu_n$. Recall that $H$ has a skew-product structure:
the base is $A: \T^2 \circlearrowleft$, and fiber variables are $(z_1, z_2) \in \T^2$. 
Let $E_{z_i} = \langle \partial_{z_i} \rangle$ denote the 1-dimensional space spanned 
by $\partial_{z_i}$, and let $E_{z_1z_2} = \langle \partial_{z_1}, \partial_{z_2} \rangle$ be the space spanned by $\partial_{z_1}$ and $\partial_{z_2}$.
We {claim} that for $p \in \Sigma_0 \setminus \mc S$, $DH_p(E_{z_i}) = E_{z_i}$ for 
$i=1,2$: Writing
\[
DH_p|_{E_{z_1z_2}} = D(H_2)_{H_1(p)}|_{E_{z_1z_2}} \circ D(H_1)_p|_{E_{z_1z_2}}\ ,
\]
we have $D(H_1)_p|_{E_{z_1z_2}}= \mbox{Id}$ because {on $\{\pi_{xy}(p)\} \times \T^2$,}
 $H_1$ is a rigid translation, {and writing $q=H_1(p)$,}
\[
D(H_2)_q|_{E_{z_1z_2}}\binom{w_1}{w_2}=\binom{D(g_1^{\tau})_{\pi_{z_1}q} w_1}{D(g_2^{\tau})_{\pi_{z_2}q} w_2}\ ,
\]
where $\tau = \tau(\pi_{z_1}q, \pi_{z_2}q)$ is a locally constant function. This proves 
{the claim.}

{Recalling that $\mu$ gives zero measure to the singularity set, so LEs are
defined $\mu$-a.e.,} we let $\lambda_{z_i}(p)$ denote the LE at $p$ in the $E_{z_i}$-direction.
Below $\varepsilon>0$ is the number in Sect \ref{Sec:PrecModelDesc}, Technical assumptions (b).

\begin{lemma} \label{Lem:NegExp}Assuming that $\varepsilon>0$ is small enough,
there is an $H$-invariant {measure $\hat \mu = \mu|_B$, the restriction of
$\mu$ to a Borel subset $B$,
such that for $\hat \mu$-a.e. $p$,}
$\lambda_{z_i}(p)<0$ for $i=1,2$.
\end{lemma}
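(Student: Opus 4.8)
The plan is to extract negative fiber Lyapunov exponents from the mass-concentration estimate in Proposition \ref{Prop:ConcNearToSink}, combined with the expansion/contraction dichotomy for the N-S flows $g_i^t$ provided by (A1). First I would note that by Lemma \ref{Lem:Propofpsiandlift}(i) and the previous subsection, for $p \in \Sigma_0 \setminus \mc S$ the derivative $DH_p$ preserves each $E_{z_i}$, and the derivative of $H$ in the $E_{z_i}$-direction at $p$ is precisely $(g_i^{\tau})'(z_i')$ evaluated at $z_i' = z_i + \hat r_i(x)$ (the point after the rotation $H_1$), since $H_1$ is a rigid translation on fibers and $\tau$ is locally constant. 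Thus $\log \|DH_p|_{E_{z_i}}\| = \log (g_i^{\tau(\cdot)})'(\cdot)$, a function that is bounded above and below on $\Sigma_0 \setminus \mc S$ and, crucially, is $< 0$ (indeed $\le \log c < 0$) whenever the post-rotation coordinate lies in $I_i^-$, by (A1).

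The key step is then to set up an invariant set $B$ on which the orbit spends a definite majority of its time with post-rotation $z_i$-coordinate in $I_i^-$. Proposition \ref{Prop:ConcNearToSink} tells us (after translating through the identification of $\psi_n^i(u)$ with the post-rotation fiber coordinate, and passing to the limit measure $\mu$ via the averaging that defines $\mu_n$) that $\mu\{(x,y;z_1,z_2) : z_i + \hat r_i(x) \in (I_i^-)^c\} \le C_\# \varepsilon$ for each $i$ — i.e. the $\mu$-measure of the ``bad set'' $G_i$ where the fiber sits away from the sink after rotation is $O(\varepsilon)$. By the ergodic decomposition of $\mu$, I would argue that for a set $B$ of positive $\mu$-measure consisting of ergodic components whose frequency of visiting $G_i$ is, say, $\le 2C_\# \varepsilon$ for $i=1,2$, we have $\mu(B) > 0$ provided $\varepsilon$ is small (a Markov/averaging argument: the average of the visit-frequency over ergodic components equals $\mu(G_i) \le C_\# \varepsilon$, so components with frequency $> 2C_\# \varepsilon$ carry $\mu$-mass $< \tfrac12$). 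Then set $\hat\mu = \mu|_B$, normalized and $H$-invariant.

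On $B$, I would apply Birkhoff's ergodic theorem to the function $u_i(p) = \log (g_i^{\tau})'(z_i + \hat r_i(x))$ along orbits: the time-average equals $\lambda_{z_i}$ $\hat\mu$-a.e. Split the orbit into the fraction of time spent in $I_i^-$ (at least $1 - 2C_\#\varepsilon$), on which $u_i \le \log c < 0$, and the rest, on which $u_i \le \log M$ for the uniform upper bound $M = \sup (g_i^{\tau})'$ coming from the compactness of $[1-b,\tau_{\max}]$ and $C^2$-ness of $v_{g_i}$. This gives $\lambda_{z_i} \le (1 - 2C_\#\varepsilon)\log c + 2C_\# \varepsilon \log M$, which is strictly negative once $\varepsilon < \varepsilon(N,\lambda,g_i,\kappa)$ is chosen small enough that $2C_\#\varepsilon(\log M - \log c) < -\log c$. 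This fixes the smallness condition on $\varepsilon$ used in the statement.

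The main obstacle I anticipate is the bookkeeping around the ergodic decomposition: Proposition \ref{Prop:ConcNearToSink} is a statement about $\psi_n^i$ uniformly in $n$, and one must carefully relate $\mu\{z_i + \hat r_i(x) \in (I_i^-)^c\}$ to those $\psi_n^i$-estimates, passing the bound through the Cesàro averaging that defines $\mu_n$ and then through the weak$^*$ limit — the set $(I_i^-)^c$ after rotation is not open, so a small-neighborhood/continuity argument (as in the proof of Proposition \ref{Prop:InvMeasBarH}) is needed to ensure the bound survives the limit. The other delicate point is ensuring $\log(g_i^\tau)'$ is genuinely a bounded measurable function $\mu$-a.e.\ despite the singularities; this follows since $\mu(\mc S_{\varepsilon_1}) \le$ const$\,\varepsilon_1$ forces $\mu(\mc S) = 0$, and off $\mc S$ the derivative is uniformly controlled. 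Once these measure-theoretic points are handled, the Lyapunov exponent estimate itself is a short computation.
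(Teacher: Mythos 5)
Your proposal follows essentially the same route as the paper's proof: translate Proposition \ref{Prop:ConcNearToSink} (via the Ces\`aro/weak$^*$ limit and the $\bar H$/$H_1$ bookkeeping) into the bound that the $\mu$-measure of the post-rotation bad set is $\le C_\#\varepsilon$, then pick out, via the ergodic decomposition and a Chebyshev-type estimate, a positive-mass collection of ergodic components that visit the bad set rarely, and apply Birkhoff with the uniform bounds $\log\|DH|_{E_{z_i}}\|\le\log C$ off $I_i^-$ and $\le\log c<0$ on $I_i^-$. One small numerical point: with your threshold $2C_\#\varepsilon$, Markov gives each good set of ergodic components mass $\ge \tfrac12$, which does \emph{not} by itself force their intersection over $i=1,2$ to have positive mass; the paper avoids this by taking the threshold $\sqrt{\varepsilon}$ (so each bad set of components has mass $O(\sqrt{\varepsilon})$ and the union bound leaves almost full mass, uniformly in $N$), and you should do likewise or at least enlarge your threshold to beat the union over $i$.
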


\begin{proof} It follows from the assumptions in Sect \ref{Sec:TecAssump} 
that there exist $C>1$ and $c < 1$
 independent of $\varepsilon$ such that 
for $i=1,2$,

\medskip
(i) $\|DH_p|_{E_{z_i}}\| \le C$ and

(ii) $ \|DH_p|_{E_{z_i}}\| \le c$ whenever $ z_i \in I^-_i $

\medskip \noindent
for all $p \in \Sigma_0 \setminus \mc S$. Let $i$ be fixed.
Assume $\varepsilon$ is small enough that if $\mu_e$ is an ergodic component of $\mu$
with $\mu_e\{z_i \in I^-_i\} > 1- \sqrt{\varepsilon}$, then by the Ergodic Theorem,
\[
\lambda_{z_i} \le  \sqrt{\varepsilon}  \log C + (1-\sqrt{\varepsilon}) \log c < 
\frac12 \log c < 0 \qquad \mu_e \mbox{-a.e. }
\]
{ By Proposition \ref{Prop:ConcNearToSink} and the way we constructed $\mu$, it follows that
\begin{equation}\label{Eq:Upperbndmeasure}
\mu(\{z_i\in (I_i^-)^c\})<C_\#\epsilon.
\end{equation}
}{Let $\{\mu_e\}_{e\in \mc E}$ be an ergodic decomposition of the invariant measure $\mu$,
and let $	\mc E'\subset \mc E$ consist of those $\mu_e$ satisfying
$\mu_e\{z_i \in I^-_i\} > 1- \sqrt{\varepsilon}$ {that by equation \eqref{Eq:Upperbndmeasure} is nonempty}. Then we may take $\hat \mu$ to be
$\int_{\mc E'} \mu_e$.}
\end{proof}

{Let us assume from here on that $\hat \mu$ has been normalized so that 
$\hat \mu (\Sigma_0)=1$.}
Recall that $\lambda$ 
is the positive LE of the Anosov map $A$.

\begin{corollary}\label{Cor:Exlambdalesso} There exists $\lambda_{\min}<0$ such that at  
$\hat \mu$-a.e. $p$, $\lambda$ is a LE and the other three LE are 
 $\le \lambda_{\min}$.
 \end{corollary}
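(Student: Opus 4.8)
The plan is to decompose the tangent space at $\hat\mu$-a.e.\ point into the skew-product pieces $E^u_A \oplus E^s_A \oplus E_{z_1} \oplus E_{z_2}$ and to identify the Lyapunov exponents along each of these four one-dimensional invariant subbundles. First I would note that the base dynamics is $A : \T^2 \circlearrowleft$, which is linear Anosov with a constant splitting $T\T^2 = E^u_A \oplus E^s_A$ on which $DA$ acts by $e^{\lambda}$ and $e^{-\lambda}$ respectively; since $H$ is a skew-product over $A$ with fiber coordinates $(z_1,z_2)$, the bundles $E^u_A$ and $E^s_A$ (pulled back through the projection $\pi_{xy}$) are $DH$-invariant and the exponents along them are exactly $\lambda$ and $-\lambda$ at every point, for any invariant measure, in particular $\hat\mu$. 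This accounts for one positive exponent (equal to $\lambda$) and one exponent equal to $-\lambda$.

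Next I would invoke the computation already carried out just before the corollary: for $p \in \Sigma_0 \setminus \mc S$ one has $DH_p(E_{z_i}) = E_{z_i}$, so $E_{z_1}$ and $E_{z_2}$ are also $DH$-invariant $\hat\mu$-a.e., and the exponents along them are precisely the $\lambda_{z_i}(p)$ studied in Lemma~\ref{Lem:NegExp}. That lemma gives $\lambda_{z_i}(p) < 0$ for $\hat\mu$-a.e.\ $p$ and $i = 1,2$. To obtain a uniform negative bound $\lambda_{\min}$ rather than merely negativity, I would go back into the proof of Lemma~\ref{Lem:NegExp}: there the estimate is $\lambda_{z_i} \le \sqrt{\varepsilon}\log C + (1-\sqrt{\varepsilon})\log c < \tfrac12 \log c$ on each ergodic component in $\mc E'$, and $c < 1$, $C > 1$ are independent of $\varepsilon$. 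Hence one may simply take
\[
\lambda_{\min} := \max\Bigl\{ -\lambda, \ \tfrac12 \log c \Bigr\} < 0,
\]
and then $-\lambda \le \lambda_{\min}$ and $\lambda_{z_i}(p) \le \tfrac12\log c \le \lambda_{\min}$ for $\hat\mu$-a.e.\ $p$ and $i=1,2$.

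Finally I would assemble these facts: at $\hat\mu$-a.e.\ $p$ the four Lyapunov exponents of $DH_p$ (counted with multiplicity along the invariant splitting) are $\lambda$, $-\lambda$, $\lambda_{z_1}(p)$, $\lambda_{z_2}(p)$; the first is positive and the remaining three are all $\le \lambda_{\min} < 0$. Since the splitting $E^u_A \oplus E^s_A \oplus E_{z_1} \oplus E_{z_2}$ exhausts the tangent space of $\Sigma_0$, there are no other exponents, which gives the statement of Corollary~\ref{Cor:Exlambdalesso}. The only genuinely delicate point is making sure the Oseledets machinery applies despite the singularities of $H$: this is already handled because $\hat\mu$ (being a restriction of $\mu$) gives zero mass to $\mc S$ and in fact $\mu(\mc S_{\varepsilon_1}) \le \text{const}\,\varepsilon_1$, so the log-integrability of $\log^+\|DH^{\pm1}\|$ needed for Oseledets holds; beyond this, the argument is just bookkeeping along an explicit invariant splitting, so I do not expect a substantial obstacle here.
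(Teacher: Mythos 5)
Your argument hinges on the claim that the one-dimensional subbundles $E^u_A\times\{0\}$ and $E^s_A\times\{0\}$ of $T\Sigma_0$ are $DH$-invariant. This is false, and it is precisely the difficulty the paper's proof is built to avoid. Differentiating $H_1(x,y,0;z_1,z_2)=(x,y,b;z_1+\hat r_1(x),z_2+\hat r_2(x))$ shows that
\[
D(H_1)_p(v_x,v_y,0;0,0)=\bigl(v_x,v_y,0;\ \hat r_1'(x)v_x,\ \hat r_2'(x)v_x\bigr),
\]
and since $r_i'>c'>0$ everywhere by (A4) and $E^u_A,E^s_A$ are not aligned with $\partial_y$ (the quantity $\beta$ used throughout the paper is nonzero), any vector in $E^u_A\times\{0\}$ or $E^s_A\times\{0\}$ acquires a nonzero $z$-component after one step. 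In the coordinates $(x,y;z_1,z_2)$, $DH$ is block lower-triangular with a genuinely nonzero off-diagonal block; the subspaces that \emph{are} invariant are $E_{z_1}$, $E_{z_2}$, $E_{z_1z_2}$, and the filtration $E_{z_1z_2}\subset E^u_A\oplus E_{z_1z_2}\subset T\Sigma_0$, not the direct sum $E^u_A\oplus E^s_A\oplus E_{z_1}\oplus E_{z_2}$ that you posit.

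The list of exponents $\{\lambda,-\lambda,\lambda_{z_1},\lambda_{z_2}\}$ is nonetheless correct, but it has to be earned --- either via the general fact that a block-triangular cocycle over an ergodic base has Lyapunov spectrum equal to the union of the diagonal blocks' spectra (which you did not invoke), or via the paper's more hands-on route. The paper takes $v^u=(\partial^u_A;0,0)$, notes $DH^n_pv^u$ stays in the invariant 3D space $E^u_A\oplus E_{z_1z_2}$ with $\pi_{xy}DH^n_pv^u=A^n\partial^u_A$, so $\|DH^n_pv^u\|\ge e^{\lambda n}\|v^u\|$; this yields a $DH$-invariant line $E^u(p)\subset E^u_A\oplus E_{z_1z_2}$ (distinct from $E^u_A\times\{0\}$) with exponent $\ge\lambda$, and the MET angle estimate $\tfrac1n\log\sin\angle(E^u(H^np),E_{z_1z_2})\to0$ gives equality. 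The analogous argument with $DH^{-n}$ applied to $v^s=(\partial^s_A;0,0)$ shows every vector with a nonzero $E^s_A$-component has forward exponent $\le -\lambda$ (note $\le$, not $=$ as you wrote: if $\lambda_{z_i}<-\lambda$ the forward exponent of a vector in $E^s_A\times\{0\}$ is strictly less than $-\lambda$, because its $z$-component grows under $DH^{-n}$), and Lemma~\ref{Lem:NegExp} covers $E_{z_1z_2}$. Your $\lambda_{\min}=\max\{-\lambda,\tfrac12\log c\}$ is the right formula, for what it is worth.
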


\begin{proof} Let $v^u=(\partial_A^u;0,0)$ be a tangent vector at $p$ 
where $\partial^u_A\in E_A^u$ is vector along 
the unstable direction of $A$. Notice that $\forall n\in\N$, $D H^n_pv^u\in E^u_A\oplus E_{z_1z_2}$ 
and $\pi_{xy}D H^n_pv^u=A^n\partial^u_A$, so 
$\|D H^n_pv^u\|\ge \|A^n\partial^u_A\|\ge e^{\lambda n}\|v^u\|$. 
This means that there is a $DH$-invariant 1D subspace 
$E^u \subset E^u_A\oplus E_{z_1z_2}$ in which the LE
is $\ge \lambda$. This LE is in fact $= \lambda$, since by the Multiplicative Ergodic Theorem,
$\lim_{n \to \infty} \frac{1}{n} \log \sin \angle (E^u({H^n(p)}), E_{z_1z_2}) =0$ {$\hat\mu-$almost everywhere}.

A similar argument shows that $v^s := (\partial_A^s;0,0)$ grows exponentially
under $DH^{-n}$ with $\|D H^{-n}_pv^s\|\ge e^{\lambda n}\|v^s\|$. This implies that every vector 
$v \in E^s_A \oplus E_{z_1z_2}$ with a component in $E^s_A$ grows exponentially under 
$DH^{-n}$. This together with Lemma \ref{Lem:NegExp} proves 
that all three LE on $E^s_A \oplus E_{z_1z_2}$ are $\le \lambda_{\min} :=
\min\{-\lambda, \frac12 \log c\} < 0$.
\end{proof}

\subsection{$(H, \hat \mu)$ as a nonuniformly hyperbolic system}\label{Sec:UnHypSys}

The map $H:\Sigma_0 \circlearrowleft$ with the invariant measure $\hat \mu$ is, {\it a priori}, 
mildly nonuniformly hyperbolic: At $\hat \mu$-a.e. $p$, there is a splitting of its tangent space $T_p\Sigma_0 = E^u(p) {\oplus} E^s(p)$ into a 1D unstable subspace $E^u(p)$ (which {varies with}
 $p$)
and a 3D stable subspace $E^s(p) \equiv E^s_A \oplus E_{z_1,z_2}$. Restricted to $E_{z_i}$, $DH_p$
is sometimes expanding and sometimes contracting, depending on whether the $z_i$-coordinate
of $H_1(p)$ is closer to $\frac12$ or to $0$. When the $z_i$-coordinate of 
$H_1(p)$ is closer to $\frac12$, the expansion is stronger than  $e^{\lambda}$ by (A1).
A stronger expansion in $E_{z_i}$ than along $\partial^u_A$ decreases  
the angle between $E^u$ and $E_{z_1, z_2}$, and the repeated occurrence of such a scenario
can potentially cause $\angle (E^u, E_{z_1, z_2})$
to come arbitrarily close to $0$. 

We do not know that $(H, \hat \mu)$ is genuinely nonuniformly hyperbolic, but have to treat it
as such unless proven otherwise. 
A standard technique for dealing with nonuniformly hyperbolic systems is 
through the use of certain point-dependent coordinate changes called Lyapunov charts
{ (see e.g. \cite{pesin1976families,katok1980lyapunov,ledrappier1985metric,Youngergodic,katok2006invariant,blumenthal2017entropy}). We review briefly below, in nontechnical terms, what these charts 
can do for us; details are provided in Appendix A.
 
For a piecewise smooth diffeomorphism equipped with an invariant probability measure 
that is not too concentrated near the singularity set, such as $H: \Sigma_0 \circlearrowleft$,
singularity set $\mc S$, and an invariant measure $\hat \mu$ with the property in
Proposition  \ref{Prop:InvMeasBarH}, the following are known to hold:

\medskip
\begin{itemize} 
\item[(1)] Let $B(r) \subset \mathbb R^4$ be the ball of radius $r$ centered at $0$.
Then at $\hat \mu$-a.e. $p \in \Sigma_0$, there is a diffeomorphism
$$
\Phi_p : B(r(p)) \to \Sigma_0 \qquad \Phi_p(0)=p\ ,
$$
with the property that the maps
$$ 
\mc H_p = (\Phi_{H(p)})^{-1} \circ H \circ \Phi_p 
$$
that go from one chart to the next are \emph{uniformly} hyperbolic with controlled second
derivatives. In fact, $\mc H_p$ is $C^1$-near a linear map with diagonal entries 
equal to the exponentials of the Lyapunov exponents at $p$. 

\medskip
\item[(2)] In exchange for  uniform hyperbolicity, we have given up on 

(i) uniform chart sizes: $r(\cdot)$ is measurable and can be arbitrarily near zero;

(ii) uniform regularity for the chart maps $\Phi_p$. 

\medskip
\item[(3)] Chart sizes can be chosen to vary slowly along orbits, with $r(Hp)/r(p) \sim 1$. 
This ensures
the overflowing property that is crucial for establishing the existence of local stable and
unstable manifolds. Distortion estimates along unstable manifolds
are easily deduced from Lyapunov charts.

\medskip
\item[(4)]For $\hat \mu$ satisfying the condition in Proposition  \ref{Prop:InvMeasBarH}, 
it can be arranged that $\Phi_p(B(r(p)) \cap \mc S = \emptyset$ for a.e. $p$,
so that for as long as one works within the domains of charts,
one does not ``see" the singularities.
\end{itemize}}

\section{Proof of SRB property}\label{Sec:SRBProp}

Let $\nu$ be an ergodic component of the measure $\hat \mu$ defined in the last section.
We assume in particular that $\nu$ possesses all the properties of $\hat \mu$ found in Sects. \ref{Sec:LEHmu} and \ref{Sec:UnHypSys}. In this section, we will (i) show that $\nu$ is an SRB measure
for the first return map $H:\Sigma_0 \circlearrowleft$ of the flow ${\bf F}^t$;
this is carried out in Sects. \ref{Sec:ENtHnu} and \ref{Sec:SRBHnu}, and (ii) build an invariant measure
for ${\bf F}^t$ out of $\nu$; this is carried out
in Sect. \ref{Sec:SRBmeasureforflow}.

\subsection{Entropy of $(H, \nu)$}\label{Sec:ENtHnu}

Recall that $\lambda$ is the positive Lyapunov exponent of $(H, \nu)$.
Our next result concerns $h_\nu(H)$, the metric entropy of $H$ with respect to $\nu$. 

\begin{proposition}\label{Prop:EntropyEquality}
\begin{equation}\label{Eq:EntForm}
h_\nu(H)= \lambda.
\end{equation}
\end{proposition}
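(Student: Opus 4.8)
The plan is to establish \eqref{Eq:EntForm} via the two standard inequalities. The upper bound $h_\nu(H)\le\lambda$ is the Margulis--Ruelle inequality: since $(H,\nu)$ has a single positive Lyapunov exponent, equal to $\lambda$ (Corollary \ref{Cor:Exlambdalesso}), the sum of positive exponents is exactly $\lambda$, and the Ruelle inequality gives $h_\nu(H)\le\sum_{\text{positive}}\lambda_i=\lambda$. One subtlety is that $H$ is only piecewise smooth, so I would invoke the version of the Ruelle inequality valid for maps with singularities (as in the work on billiards and other discontinuous systems), which applies here because $\nu$ charges the singularity set $\mc S$ with zero measure and, by Proposition \ref{Prop:InvMeasBarH}, $\nu(\mc S_\xi)$ decays linearly in $\xi$, so that $\log\dist(\cdot,\mc S)$ is $\nu$-integrable and the derivative of $H$ near $\mc S$ is suitably controlled.

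The harder direction is $h_\nu(H)\ge\lambda$. Here the plan is to produce a partition (or a sequence of partitions) whose atoms shrink in the unstable direction at rate $e^{-\lambda n}$ under backward iteration, and to use the fact that $H$ expands one-dimensional unstable curves by exactly $e^\lambda$. Concretely, I would work in Lyapunov charts (Sect. \ref{Sec:UnHypSys}): in these charts the dynamics $\mc H_p$ is uniformly hyperbolic with the unstable direction expanded by $\approx e^\lambda$, and the local unstable manifolds $W^u_{\mathrm{loc}}(p)$ are one-dimensional graphs over $E^u$ with uniformly bounded distortion (item (3) of Sect. \ref{Sec:UnHypSys}). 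Take a finite partition $\mc P$ of $\Sigma_0$ with small diameter and with boundary $\nu$-null; refine so that $\mc P$ is adapted to the chart structure. Since $H$ expands unstable curves by $e^\lambda$ and contracts everything transverse, the conditional entropy $h_\nu(H,\mc P)$ is bounded below by the exponential growth rate of the number of $\bigvee_{k=0}^{n-1}H^{-k}\mc P$-atoms needed to cover a fixed unstable curve, which is $\asymp e^{\lambda n}$, giving $h_\nu(H,\mc P)\ge\lambda$. Equivalently, one can run a Brin--Katok / local-entropy argument: the $n$-th dynamical ball intersected with an unstable manifold has length $\asymp e^{-\lambda n}$, while its $\nu^u$-conditional measure (which has a density, once the SRB property is in hand — or a lower bound of this form obtained directly from the absolute continuity of $(H_1)_*m_{\gamma_0}$ along unstable curves) is comparable to its length, so $-\frac1n\log\nu(B_n(p,\epsilon))\to\lambda$.

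There is a mild circularity to navigate: the cleanest proof of $h_\nu(H)\ge\lambda$ uses the SRB property (conditional measures on unstables have densities), which is proved in Sect. \ref{Sec:SRBHnu}, possibly using this entropy identity. I would break the loop by proving the lower bound directly from the construction of $\nu$ as a limit of $\mu_n=\frac1n\sum H^i_*m_{\gamma_0}$: the measures $H^i_*m_{\gamma_0}$ are, on unstable curves, absolutely continuous with uniformly bounded densities (bounded distortion of $H$ restricted to $\gamma_n$, which follows from the $C^2$ bounds and the geometry-of-curves estimates of Sect. \ref{Sec:GeomItCurv}), and this property — a lower bound on conditional measures of unstable balls in terms of their radius — passes to the limit and is all that the Brin--Katok argument needs for the lower bound. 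Alternatively, and perhaps more cleanly, one cites the general entropy formula for limits of such pushed-forward measures in nonuniformly hyperbolic settings (the "entropy $\ge$ sum of positive exponents for SRB-like measures" principle), for which Sects. \ref{Sec:DistPushFor}--\ref{Sec:UnHypSys} have assembled exactly the required hypotheses.

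I expect the main obstacle to be bookkeeping around the singularities: making sure that, in the Lyapunov-chart picture, the dynamical balls $B_n(p,\epsilon)$ one uses for the Brin--Katok estimate genuinely avoid $\mc S$ for a.e. $p$ (item (4) of Sect. \ref{Sec:UnHypSys} is what makes this possible), and that the partition $\mc P$ can be chosen with $\nu$-null boundary and compatible with the chart system despite $\mc S$ being only piecewise smooth. Once these technical points are arranged, both inequalities are routine, and together they yield \eqref{Eq:EntForm}.
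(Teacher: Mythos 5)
Your route is genuinely different from the paper's, and the two are worth contrasting. You propose the classical two-inequality scheme: Margulis--Ruelle (in its Katok--Strelcyn form for maps with singularities) for $h_\nu(H)\le\lambda$, and a Brin--Katok / partition argument for $h_\nu(H)\ge\lambda$. The paper instead exploits the skew-product structure of $H$ over the Anosov base $A$: it first shows (Lemma \ref{Lem:AtomFiber}, via a fiber version of Katok's ``all exponents negative $\Rightarrow$ atomic measure'' result due to Ruelle--Wilkinson) that the disintegration $\nu_{(x,y)}$ on $(z_1,z_2)$-fibers is atomic, and then applies the Abramov--Rokhlin formula $h_\nu(H)=h_{m_{\T^2}}(A)+h_\nu(\{T_{(x,y)}\})$. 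Atomic fibers force the fiber entropy to vanish, and $h_{m_{\T^2}}(A)=\lambda$, giving the equality in one stroke. This buys a proof that sidesteps both inequalities and, more importantly, sidesteps the circularity you correctly flagged.

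That circularity is where your proposal has a real gap. Your lower bound needs conditional measures of $\nu$ on unstable leaves to dominate Lebesgue (or some version of bounded density), and you propose to get this by passing to the limit the bounded-density property of $H^i_*m_{\gamma_0}$ along the curves $\gamma_i$. But the singularity set $\mc S$ fragments $\gamma_i$ into many pieces, and the number and lengths of these pieces must be controlled against the exponential expansion before any bounded-density statement survives the limit; this is a growth-lemma-type argument. The paper explicitly acknowledges (Sect.~\ref{Sec:SRBHnu}) that controlling ``the lengths of the connected components of $\gamma_n$'' and ``distortion along these curves'' is exactly the work it chose not to do, opting for the Ledrappier--Young converse-entropy-formula route for SRB and the skew-product fiber-entropy route for Proposition~\ref{Prop:EntropyEquality}. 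As written, the phrase ``this property passes to the limit'' hides this nontrivial step, and the alternative you offer (``cite the general entropy formula for limits of pushed-forward measures'') is precisely the SRB-type conclusion one is trying to establish. Your upper bound, by contrast, is fine: the integrability condition $\nu(\mc S_\xi)\le C\xi$ from Proposition~\ref{Prop:InvMeasBarH} is exactly what Katok--Strelcyn require.
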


To prove this proposition, recall that
$H:\sigma_0\times \T^2\rightarrow \sigma_0\times \T^2$ is a skew product, which we may write as 
\[
H(x,y;z_1,z_2)=(A(x,y); T_{(x,y)}(z_1,z_2))
\]
where $\T^2_{(x,y)}$ is the vertical fiber over $(x,y)$ and
$T_{(x,y)} : \T^2_{(x,y)} \to \T^2_{A(x,y)}$ is the fiber map. Let $m_{\T^2}$ denote Lebesgue measure
on the base. Since $(\pi_{(x,y)})_*\hat\mu = m_{\T^2}$ and $(A, m_{\T^2})$ is ergodic, it follows
that $(\pi_{(x,y)})_*\nu = m_{\T^2}$. Let $\{\nu_{(x,y)}\}_{(x,y)\in\T^2}$ be a disintegration
of $\nu$ on vertical fibers, i.e.  
\[
d\nu(x,y;z_1,z_2)=d\nu_{(x,y)}(z_1,z_2)dm_{\T^2}(x,y).
\]  

\begin{lemma}\label{Lem:AtomFiber} 
$\nu_{(x,y)}$ is atomic for $m_{\T^2}$-a.e. $(x,y)$.
\end{lemma}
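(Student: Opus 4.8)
The goal is to show that the disintegration $\nu_{(x,y)}$ of $\nu$ on vertical fibers is atomic for $m_{\T^2}$-almost every $(x,y)$. The heuristic is that the fiber dynamics contracts: along $E_{z_1}$ and $E_{z_2}$ the Lyapunov exponents of $(H,\nu)$ are both negative by Lemma \ref{Lem:NegExp} (and Corollary \ref{Cor:Exlambdalesso}), so two points on the same fiber that stay on the same fiber forever get exponentially close under forward iteration. The standard way to turn this into atomicity is an entropy/conditional-measure argument, or a direct contraction argument using the skew-product structure. I would take the direct route.

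\textbf{First step: reduce to a statement about a ``fiber return'' system.} Since $(A,m_{\T^2})$ is ergodic and $(\pi_{(x,y)})_*\nu = m_{\T^2}$, by the ergodic theorem $m_{\T^2}$-a.e.\ base point is recurrent; more useful is to pass to the \emph{natural extension} or simply to work with the sequence of fiber maps along a typical orbit. Fix a $\nu$-typical point $p=(x,y;z_1,z_2)$ and write $(x_n,y_n)=A^n(x,y)$. The composed fiber map $T^{(n)}_{(x,y)} := T_{(x_{n-1},y_{n-1})}\circ\cdots\circ T_{(x,y)} : \T^2_{(x,y)}\to\T^2_{(x_n,y_n)}$ is, by Corollary \ref{Cor:Exlambdalesso}, exponentially contracting in the Lyapunov sense at $\nu$-a.e.\ point: $\tfrac1n\log \|D(T^{(n)}_{(x,y)})_{(z_1,z_2)}\| \to$ some quantity $\le\lambda_{\min}<0$. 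One must be careful because $T^{(n)}$ is only piecewise smooth; here the Lyapunov-chart machinery of Section \ref{Sec:UnHypSys}, item (4), is exactly what is needed — in charts one does not see the singularity set, and the maps $\mc H_p$ are uniformly hyperbolic. I would invoke this to get genuine (not just infinitesimal) contraction: for $\nu$-a.e.\ $p$ there is a neighborhood inside the fiber, of $\hat\mu$-positive-conditional-measure-scale, on which $T^{(n)}$ contracts uniformly.

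\textbf{Second step: use contraction to force atomicity of $\nu_{(x,y)}$.} The measures $\nu_{(x,y)}$ satisfy the equivariance relation $(T_{(x,y)})_*\nu_{(x,y)} = \nu_{A(x,y)}$ for $m_{\T^2}$-a.e.\ $(x,y)$ (this is just the statement that $\nu$ is $H$-invariant, disintegrated). Suppose for contradiction that on a positive-$m_{\T^2}$-measure set of base points the conditional $\nu_{(x,y)}$ is non-atomic, equivalently $\int(\text{diam of a positive-mass piece})\,d\nu_{(x,y)}>0$ on a positive-measure set. A clean way to run this: consider the function $D(x,y,z_1,z_2) := $ the $\nu_{(x,y)}$-measure of the ball of radius $\rho$ in the fiber around $(z_1,z_2)$, for small $\rho$; atomicity of $\nu_{(x,y)}$ is equivalent to $\lim_{\rho\to0} D = $ (mass of the atom) $>0$ $\nu_{(x,y)}$-a.e. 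Alternatively, and I think more robustly, use the argument of Ledrappier–Young / the "entropy is carried by the unstable direction" philosophy in the form: since all fiber exponents are negative, $h_\nu(H)$ cannot receive any contribution from the fiber direction, so the fiber partition into points is (mod $\nu$) a measurable partition with zero conditional entropy growth — which for a skew product over a Bernoulli/Anosov base with a.e.\ exponential fiber contraction is known to force atomic (indeed one-point) fibers. Concretely: pick a $\nu$-typical $p$; by Poincaré recurrence for $(A,m_{\T^2})$ the base orbit returns infinitely often near $(x,y)$; along such return times the fiber maps contract by a definite factor (Lyapunov charts give uniformity on a positive-measure set via Lusin/Birkhoff), so any two points of $\T^2_{(x,y)}$ that both lie in $\supp\nu_{(x,y)}$ and whose forward orbits both remain $\nu$-typical must have been brought exponentially close — but they started at a fixed distance, contradiction unless $\supp\nu_{(x,y)}$ is a single point, i.e.\ $\nu_{(x,y)}=\delta$. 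Running the contradiction carefully requires the set of $(x,y)$ with $\nu_{(x,y)}$ non-atomic to be $A$-invariant (it is, by the equivariance relation and the ergodicity of $A$), so it has measure $0$ or $1$; the contraction argument rules out measure $1$.

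\textbf{Main obstacle.} The delicate point is the piecewise-smoothness: the fiber maps $T_{(x,y)}$ and their compositions have discontinuities (coming from $\mc S(H_2) = \sigma_b\times S$), so "exponential contraction of the Lyapunov exponent" does not immediately yield that two nearby fiber points get brought together — orbits can be separated by landing on opposite sides of a discontinuity surface. This is precisely why the properties established for $\hat\mu$ in Proposition \ref{Prop:InvMeasBarH} (the measure gives mass $\le c_3\xi$ to $\xi$-neighborhoods of $\mc S$) and the Lyapunov-chart construction of Section \ref{Sec:UnHypSys} (item (4): charts avoid $\mc S$) are needed. So the real work is: quantify, using Proposition \ref{Prop:InvMeasBarH}, that for $\nu$-a.e.\ $p$ the forward orbit spends a controlled (Borel–Cantelli-summable) amount of time near $\mc S$, hence stable manifolds / contracting fiber neighborhoods of definite size exist $\nu$-a.e.; then the contraction argument goes through on those. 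I expect the write-up to either invoke the existence of local stable manifolds for $(H,\nu)$ (guaranteed by Section \ref{Sec:UnHypSys}, item (3)) and observe that fiber directions lie inside the stable manifold, so points on a common fiber inside a local stable manifold are forward-asymptotic, and then cover $\nu$-a.e.\ fiber by at most countably many such local stable manifold pieces — giving atomicity directly. That is the cleanest packaging and the one I would write.
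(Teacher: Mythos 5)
Your proposal takes essentially the same approach as the paper: you identify the driving mechanism (both fiber Lyapunov exponents negative, from Lemma~\ref{Lem:NegExp}/Corollary~\ref{Cor:Exlambdalesso}) and you correctly see that the singularities are neutralised by the Lyapunov-chart machinery of Sect.~\ref{Sec:UnHypSys}. The paper's own proof is in fact just a reduction to the literature: it observes that this is precisely the ``fiber version of Katok's theorem'' proved by Ruelle and Wilkinson in~\cite{ruelle2001absolutely}, and that their argument, being carried out entirely inside Lyapunov charts, goes through unchanged in the piecewise-smooth setting because chart images never meet~$\mc S$. One correction to the route you say you would actually write up: the ``cleanest packaging'' --- that two fiber points in a common local stable leaf are forward-asymptotic, and a fiber is covered mod~$\nu_{(x,y)}$ by countably many such leaves --- does not by itself yield atomicity. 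Forward-asymptoticity of orbits says nothing about whether $\nu_{(x,y)}$ has an atom; a non-atomic measure supported on a single stable leaf is entirely consistent with it. The Katok/Ruelle--Wilkinson argument needs the additional step you gesture at in your contradiction sketch but do not close: use the equivariance $(T_{(x,y)})_*\nu_{(x,y)}=\nu_{A(x,y)}$ together with Poincar\'e recurrence to a Pesin/uniformity block to transport the contraction into a uniform positive lower bound on $\nu_{(x,y)}$-mass of arbitrarily small fiber balls around a.e.\ point, which is what actually produces atoms. Since the paper simply cites~\cite{ruelle2001absolutely} rather than reproving this, your sketch and the paper's proof have the same mathematical content, with that one step left informal on your side.
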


The idea of the proof goes back to {\cite{katok1980lyapunov}}, who proved that if all the Lyapunov exponents
of a diffeomorphism with respect to an ergodic measure are strictly negative, then the
measure is supported on a periodic orbit. A fiber version of Katok's result, meaning 
the corresponding result for skew products when all the Lyapunov exponents of 
the fiber maps are strictly negative, is proved in \cite{ruelle2001absolutely}. 
Singularities aside, our setup fits this setting, as both of the exponents of $\{T_{(x,y)}\}$ are strictly 
negative. Our proof follows that in \cite{ruelle2001absolutely} nearly verbatim. The presence of singularities
is immaterial because the proof uses Lyapunov charts, and for as
long as one works within Lyapunov charts, the singularities of $H$ are not ``visible"
by Property (ii) in Sect. \ref{Sec:UnHypSys}.

\begin{proof}[Proof of Proposition \ref{Prop:EntropyEquality}]
The assertion follows from Lemma \ref{Lem:AtomFiber} and a general result (see \cite{bogenschutz1992abramov} Corollary 2 or \cite{kifer2012ergodic}) which asserts that
the entropy of a skew-product map is equal to the sum of the entropy of the base and fiber
entropy. {(For the definition of fiber entropy, see \cite{bogenschutz1992abramov} or \cite{kifer2012ergodic}.) In our setting, denoting the fiber entropy by $h_\nu(\{T_{(x,y)}\})$, 
we have}
\[
h_\nu(H) =  h_{m_{\T^2}}(A) + h_\nu(\{T_{(x,y)}\}).
\]
Because the conditional measures on fibers are purely atomic, $h_\nu(\{T_{(x,y)}\})=0$.
\end{proof}

\subsection{Proof of SRB property for $(H,\nu)$}\label{Sec:SRBHnu}

The definition of SRB measure requires the almost-everywhere existence of unstable manifolds,
a fact guaranteed by Proposition \ref{Prop:LocManLyapCharts} for $(H,\mu)$.

One way to build SRB measures is to push forward Lebesgue measure on a curve or
disk having 
the dimension of $E^u$ and roughly aligned with $E^u$ (e.g. a piece of local unstable manifold),
and to show that for large $n$, a positive fraction of the pushed-forward measure accumulates 
on a stack of unstable manifolds of uniform length, with uniformly bounded conditional densities 
on unstable leaves. {This is an option, but one that would have to control the lengths of
the connected components of $\gamma_n$
(for which techniques are well developed in the billiards literature, see 
{\cite{chernov2006chaotic}}) and distortion along these curves} (see Sect. \ref{Sec:DistPushFor}). 
Another possibility, which we have chosen to adopt, is to appeal to a known result, namely 
converse to the entropy formula.

We recall this result, first proved for diffeomorphisms of compact manifolds (without singularities). 
Notations in the statement of Theorems \ref{Thm:EntropyFormula} and \ref{Thm:EntFormSing} and their sketches of proofs are independent
of those in the rest of this paper.

\begin{theorem}\label{Thm:EntropyFormula} [{\cite{ledrappier1984proprietes},\cite{ledrappier1985metric}}] Let $f : M \circlearrowleft$ be a $C^2$
diffeomorphism of a compact Riemannian manifold $M$, and let 
$\theta$ be an $f$-invariant Borel probability measure. Let $\lambda_i$ be the 
distinct Lyapunov exponents of $(f, \theta)$ and let $m_i$ be the multiplicity of $\lambda_i$.
In the setting above, if
\begin{equation} \label{entropyformula}
h_\theta(f) = \int \sum_{\lambda_i >0} \lambda_i m_i \ d\theta\ ,
\end{equation}
then $\theta$ is an SRB measure.
\end{theorem}

This result was first proved in {\cite{ledrappier1984proprietes}} assuming $\lambda_i \ne 0$ for all $i$;
it was extended in {\cite{ledrappier1985metric}} to allow zero Lyapunov exponents.
 
\begin{theorem} \label{Thm:EntFormSing}The setting of Theorem \ref{Thm:EntropyFormula} can be extended to the following.
Assume there is

(i) a set  $\mc S \subset M$ that is the finite union of codimension one submanifolds,

(ii)  a $C^2$-bounded map $f|_{M \setminus \mc S}: M \setminus \mc S \to M$ 
that is a diffeomorphism between 

\quad $M \setminus \mc S$ 
and its image;

(iii) an $f$-invariant Borel probability measure $\theta$ on $M \setminus \mc S$
with the property that 

\quad for some $C>0$, $\theta(\mc S_\varepsilon) < C\varepsilon$
for all small $\varepsilon>0$.

\medskip \noindent
Then the assertion in Theorem \ref{Thm:EntropyFormula} continues to be valid, that is, Eq (\ref{entropyformula})
implies $\theta$ is an SRB measure.
\end{theorem}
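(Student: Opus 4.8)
The plan is to reduce Theorem~\ref{Thm:EntFormSing} to the singularity-free Theorem~\ref{Thm:EntropyFormula} by the standard device of building a tower (or ``induced system'') on which the dynamics sees no singularities, and then transcribing the proof of Ledrappier--Young essentially verbatim in the Lyapunov-chart formalism. The condition $\theta(\mc S_\varepsilon) < C\varepsilon$ is exactly the hypothesis that makes this work: it guarantees that $\log \dist(\cdot, \mc S)$ is $\theta$-integrable, hence that $\dist(f^n x, \mc S)$ decays at most subexponentially along $\theta$-typical orbits (by Borel--Cantelli / the Poincar\'e recurrence-type argument of Katok--Strelcyn). This in turn ensures that all the geometric constructions in the proof of the entropy formula --- local stable/unstable manifolds, their absolute continuity, the measurable partitions subordinate to unstable manifolds --- go through with measurable-but-a.e.-positive ``radii'' that shrink at worst subexponentially, which is all that is ever used.

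Concretely, the steps I would carry out are as follows. First, invoke Katok--Strelcyn theory (see \cite{katok2006invariant}): from (i)--(iii), the map $f|_{M\setminus\mc S}$ together with $\theta$ admits a full-measure set on which local unstable manifolds $W^u_{\mathrm{loc}}(x)$ and local stable manifolds are defined, with sizes bounded below by a measurable function, and on which Lyapunov charts $\Phi_x$ exist with the four properties recalled in Sect.~\ref{Sec:UnHypSys} --- in particular Property (4), that $\Phi_x(B(r(x))) \cap \mc S = \emptyset$ a.e. Second, observe that the proof of Theorem~\ref{Thm:EntropyFormula} in \cite{ledrappier1985metric} is local: it constructs a measurable partition $\xi$ subordinate to unstable manifolds, compares $h_\theta(f)$ with the ``transverse dimension'' / entropy contributed by the unstable directions via the increasing martingale of conditional measures $\{\theta^{\xi}_x\}$, and shows that equality in (\ref{entropyformula}) forces these conditionals to be absolutely continuous on $W^u$. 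Every estimate in that argument is made within Lyapunov charts and along single unstable leaves; none of it references the ambient smooth structure near $\mc S$. So one goes through the Ledrappier--Young proof replacing ``$C^2$ diffeomorphism of $M$'' by ``$C^2$-bounded diffeomorphism off $\mc S$ with $\theta(\mc S_\varepsilon)<C\varepsilon$'', and at each point where uniform bounds were used, substitutes the Katok--Strelcyn regularity functions, checking that the subexponential decay of $\dist(f^nx,\mc S)$ (a consequence of (iii)) lets the chart radii $r(f^nx)$ stay controlled, i.e. $r(f^nx)/r(f^{n-1}x)\to 1$. Third, conclude that equality (\ref{entropyformula}) implies the conditional measures of $\theta$ on unstable manifolds have densities, which is precisely the SRB property as in Definition~\ref{Def:SRBMeasureFlows}.

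I would present this as a sketch rather than a full proof, citing \cite{ledrappier1984proprietes,ledrappier1985metric} for the core argument and \cite{katok2006invariant} (Katok--Strelcyn) for the reduction of the singular case to the regular one, and remarking that this precise extension --- entropy formula and its converse for maps with a singularity set satisfying a polynomial (indeed linear) tail bound --- is by now folklore and has been used in several places (e.g. the billiards literature, \cite{chernov2006chaotic}, and in the study of piecewise-smooth systems generally). The one point worth spelling out is why hypothesis (iii) is exactly the right condition: it is what guarantees $\int |\log \dist(x,\mc S)|\, d\theta < \infty$, hence $\tfrac1n \log \dist(f^nx,\mc S) \to 0$ a.e., which is the single ingredient beyond the smooth case that the Lyapunov-chart machinery needs.

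The main obstacle --- and the thing I would be most careful about --- is verifying that the \emph{converse} direction of Ledrappier--Young (equality of entropy and sum of positive exponents $\Rightarrow$ SRB), as opposed to merely the inequality $h_\theta(f) \le \sum \lambda_i^+ m_i$, survives the presence of singularities. The forward/Ruelle inequality is robust and easy; the subtle part of \cite{ledrappier1985metric} is the rigidity argument identifying the conditionals as absolutely continuous, which uses the ``entropy along unstable manifolds'' apparatus (the partition $\xi$, its refinements $f^{-n}\xi$, and the Shannon--McMillan--Breiman convergence of $-\tfrac1n\log\theta^{\xi_n}_x$). I would need to confirm that $\xi$ can be chosen so that its atoms avoid $\mc S$ up to a set of small measure (again using $\theta(\mc S_\varepsilon)<C\varepsilon$ to make the ``bad'' set have small measure and the entropy contribution of its boundary negligible), and that the martingale convergence is unaffected. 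This is routine in spirit but is where the real content of the extension lies; everything else is bookkeeping.
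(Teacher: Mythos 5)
Your proposal is correct and follows essentially the same route as the paper's Appendix~B: work in Lyapunov charts whose domains avoid $\mc S$ (the condition $\theta(\mc S_\varepsilon)<C\varepsilon$ yields, via Borel--Cantelli, the subexponential decay of $\dist(f^nx,\mc S)$ needed to keep chart radii slowly varying), and then observe that the entire Ledrappier--Young argument --- subordinate measurable partition, entropy contributed by unstable leaves, and the density-rigidity step using convexity of $-\log$ --- lives inside these charts, so it goes through unchanged. The only stray note is your opening mention of ``building a tower / induced system,'' which is a different device and not what you (or the paper) then actually use.
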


{ Requirement (iii) for $H:\Sigma_0\circlearrowleft$ is provided by Proposition \ref{Prop:InvMeasBarH}}.
The statement above is sufficient for our purposes, though the boundedness of 
second derivatives can be relaxed as long as $f$ is controlled
in a neighborhood of $\mc S$, and the measure can be more concentrated
near $\mc S$ than in Condition (iii) ({see e.g.  the conditions treated in
 \cite{katok2006invariant}}.

The proof of Theorem \ref{Thm:EntFormSing} is nearly identical to that of Theorem \ref{Thm:EntropyFormula}. We include { in Appendix B}
a very brief outline to show how similar the two results are and how the presence of the 
singularity set is dealt with.

\begin{corollary} $(H,\nu)$ is an SRB measure.
\end{corollary}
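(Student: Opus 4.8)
The plan is to obtain the corollary as a direct application of the singular entropy characterisation of SRB measures, Theorem~\ref{Thm:EntFormSing}, taking $f=H$, $M=\Sigma_0$, singularity set $\mc S=\mc S(H)$, and $\theta=\nu$. Almost all of the substance is already in place in Sections~\ref{Sec:DistPushFor}--\ref{Sec:UnHypSys}; what remains is to check that the hypotheses of that theorem hold and that its entropy equality is met.

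First I would collect the exponent and entropy data for $(H,\nu)$. Since $\nu$ is an ergodic component of $\hat\mu$, its Lyapunov exponents are $\nu$-a.e.\ constant, and by Corollary~\ref{Cor:Exlambdalesso} they are the single positive exponent $\lambda$, carried by the one-dimensional subspace $E^u$ (hence of multiplicity one), together with three exponents $\le\lambda_{\min}<0$. Thus $\int\sum_{\lambda_i>0}\lambda_i m_i\,d\nu=\lambda$. On the other hand Proposition~\ref{Prop:EntropyEquality} gives $h_\nu(H)=\lambda$ outright. Hence the entropy equality~(\ref{entropyformula}) holds for $(H,\nu)$, and it remains only to verify conditions (i)--(iii) of Theorem~\ref{Thm:EntFormSing}.

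For (i), I would recall from Section~\ref{Sec:LimiRelSing} that $\mc S(H_2)=\sigma_{b}\times S$ with $S=S_1\cup S_2$ a finite union of the codimension-one tori $\{z_i=0\}$ and $\{z_i=\tfrac12\}$, so that $\mc S(H)=H_1^{-1}(\mc S(H_2))$ is again a finite union of codimension-one submanifolds, $H_1$ being a diffeomorphism. For (ii), on $\Sigma_0\setminus\mc S$ the map $H=H_2\circ H_1$ is the composition of the fibrewise rigid translation $H_1$ with $H_2$, whose fibre component is obtained by flowing along the $C^2$ vector fields $v_{g_i}$ for the locally constant time $\tau\in[1-b,\tau_{\max}]$; since $A$ is linear and the $r_i$ and $v_{g_i}$ are $C^2$, all the relevant derivatives are bounded on the compact complement of a neighbourhood of $\mc S$, and the fibre map maps each of the four quadrants of $\T^2$ cut out by $S$ into itself, so $H|_{\Sigma_0\setminus\mc S}$ is a $C^2$-bounded diffeomorphism onto its image. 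For (iii), Proposition~\ref{Prop:InvMeasBarH}, together with the remark at the end of Section~\ref{Sec:LimiRelSing}, gives $\mu(\mc S_\varepsilon)<\mathrm{const}\cdot\varepsilon$ for all small $\varepsilon>0$ for the limiting measure $\mu$, hence the same bound for $\hat\mu=\mu|_B$; this is exactly among the properties $\nu$ is assumed to inherit at the start of Section~\ref{Sec:SRBProp}.

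The one point I expect to require a little care is the passage of the concentration estimate to an ergodic component: the linear bound $\hat\mu(\mc S_\varepsilon)\le C\varepsilon$ need not pass to ergodic components in that form, but it does give $\int\log\dist(\cdot,\mc S)^{-1}\,d\hat\mu<\infty$, and \emph{this} integrability passes to $\nu$ by disintegrating $\hat\mu$ over its ergodic decomposition; by the remark following Theorem~\ref{Thm:EntFormSing}, such log-integrability together with the control of $H$ near $\mc S$ from (ii) is all that is actually needed. With (i)--(iii) and the entropy equality in hand, Theorem~\ref{Thm:EntFormSing} applies and yields that $\nu$ is an SRB measure for $H$; everything else is bookkeeping over the preceding sections.
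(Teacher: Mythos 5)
Your proposal follows essentially the same route as the paper: verify the entropy equality via Corollary~\ref{Cor:Exlambdalesso} and Proposition~\ref{Prop:EntropyEquality}, then apply Theorem~\ref{Thm:EntFormSing}, using Proposition~\ref{Prop:InvMeasBarH} for the concentration hypothesis. The one place you go further than the paper is also the one place where the paper is slightly cavalier: the paper simply declares that the ergodic component $\nu$ ``possesses all the properties of $\hat\mu$'' from Sects.~\ref{Sec:LEHmu} and~\ref{Sec:UnHypSys}, whereas you correctly observe that the linear bound $\hat\mu(\mc S_\varepsilon)\le C\varepsilon$ need not itself descend to ergodic components, and replace it with the log-integrability $\int\log\dist(\cdot,\mc S)^{-1}\,d\nu<\infty$, which does pass through the ergodic decomposition. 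This is exactly what the Lyapunov-chart construction in Appendix~A actually consumes (via the Borel--Cantelli estimate $\sum_n\nu(\mc S_{e^{-n\delta_1/2}})<\infty$), so your replacement is both valid and sufficient. In short, the substance matches the paper's, and the one refinement you add is a genuine (if minor) tightening of a step the paper leaves implicit.
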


Ingredients of the proof include (i) the entropy formula (\ref{entropyformula}), proved
in Corollary \ref{Cor:Exlambdalesso} and Proposition \ref{Prop:EntropyEquality}; (ii) Lyapunov charts 
with the properties in { Appendix A}; and (iii) Theorem \ref{Thm:EntFormSing}, a direct application of which
 gives the desired result.


\subsection{SRB and physical measures for the flow} \label{Sec:SRBmeasureforflow}

Passing of results of this type from cross-section map to flow is standard, but we include 
it for completeness. To distinguish between objects associated with the flow 
${\bf F}^{t}$ and those associated with the return map $H$, we will write 
$W^u_{\bf F; {\rm loc}}$, resp. $W^u_{H; {\rm loc}}$, and so on.

\begin{proof}[Proof of Theorem \ref{Thm:ExistSRBforflow}] Let $\nu$ be as above, 
and let {$T_0: \Sigma_0 \to (0, \infty)$} 
 be the return time from the cross-section $\Sigma_0$ 
to itself under the flow ${\bf F}^{t}$. We let $ \nu_{\bo F}$ be the normalization of
the pushforward of $\nu$ up to return time,  i.e.,
\[
\nu_{\bo F} := \frac{1}{\int T_0 d\nu} \ \int_0^\infty  {\bf F}^t_*(  \nu|_{\{t<T_0\}}) dt
\]
where $\nu|_{\{t<T_0\}}(X)=\nu(X\cap \{t<T_0\})$ for any measurable set $X\subset \Sigma_0$.
Then $\nu_{\bo F}$ is clearly an ${\bf F}^{t}$-invariant Borel probability measure on $M$.

To prove the SRB property of $\nu_{\bo F}$, we need to show that its disintegration on
local weak unstable manifolds $W^u_{\bf F;{\rm loc}}$ of ${\bf F}^{t}$ have conditional densities.
{For $q\in M$ let $p\in \Sigma_0$ } be such that $q =  {\bf F}^{t_q}(p)$
for $0< t_q<T_0(p)$. {Then, for $\nu_{\bo F}-$a.e. $q\in M$ there  exists $r$ such that }
$$
W^u_{\bf F;{\rm loc}}(q) = \bigcup_{\{t: |t-t_q|<r\}} {\bf F}^{t}(W^u_{H; {\rm loc}}(p))
$$
 is a piece of local weak unstable manifold of the flow. From the
definition of $\nu_{\bo F}$, conditional probabilities on these objects are clearly equivalent
to $m_{W^u_{\bf F;{\rm loc}}}$, the Lebesgue measure on $W^u_{\bf F;{\rm loc}}$.
\end{proof}

We say a point $q \in M$ is {\it future-generic} with respect
to $\nu_{\bo F}$ if for all continuous observables $h: M \to \R$, 
$$
\frac{1}{T} \int_0^{T} h({\bf F}^t(q)) dt \to \int h d \nu_{\bo F} \quad \mbox{as } T \to
\infty\ .
$$
The invariant measure $\nu_{\bo F}$ is a physical measure if the set of points future-generic
with respect to $\nu_{\bo F}$ has positive Lebesgue measure ($m$)  on $M$.

\begin{proof}[Proof of Theorem \ref{Thm:ExistPhisicMeas}]
Let $W=W^u_{\bf F;{\rm loc}}(q)$, and assume $m_W$-a.e. $q' \in W$
is future-generic wrt $\nu_{\bo F}$. Defining $W^{ss}_{\bf F; {\rm loc}}(q')$ to be the
strong stable manifold at $q'$ and letting
\begin{equation} \label{Wss}
\Lambda(q):=\cup_{q' \in W} \ W^{ss}_{\bf F; {\rm loc}}(q')\ ,
\end{equation}
we have that $m(\Lambda(q))>0$ by the absolute continuity of the $W^{ss}_{\bf F; {\rm loc}}$-foliation.
Since $d({\bf F}^t(q'), {\bf F}^t(q'')) \to 0$ exponentially fast for
$q'' \in W^{ss}_{\bf F; {\rm loc}}(q')$, $q''$ is future-generic wrt $\nu_{\bo F}$ when $q'$ is.
This proves that $m$-a.e. $q'' \in \Lambda(q)$ is future-generic wrt $\nu_{\bo F}$.
 \end{proof}
 
The structures here are in fact so simple one does not need to
 invoke the absolute continuity of $W^{ss}_{\bf F; {\rm loc}}$. {We claim that for $q' \in W$
 in (\ref{Wss}), $W^{ss}_{\bf F; {\rm loc}}(q') \subset \{w=\mbox{const}\}$. 
To see this, let $p \in \Sigma_0$, and consider $W^s_{H; {\rm loc}}(p)$, the 3D local 
stable manifold for $H$ at $p$. Then $W^s_{H; {\rm loc}}(p) \subset \Sigma_0$, and 
because the return time $T_0$ is locally constant,  
\[
{\bf F}^{T_0(p)}(W^s_{H; {\rm loc}}(p)   \subset W^s_{H; {\rm loc}}(Hp) \ .
\]
Letting $T_n(p) =T_0(p)+T_0(Hp) + \cdots + \cdots T_0(H^{n-1}p)$, the
same argument gives
\[
{\bf F}^{T_n(p)}(W^s_{H; {\rm loc}}(p) )  \subset W^s_{H; {\rm loc}}(H^np)  
\]
for all $n \in \Z^+$. This implies that $W^{ss}_{\bf F; {\rm loc}}(p) \subset \{w=0\}$, and the claim follows.}

\bigskip

{
\section*{Appendix}

\noindent
{\bf A. Lyapunov charts and related results}
 
\medskip
 First some notation:
We fix a number $\lambda_0 < \min \{\lambda, -\lambda_{\min}\}$, and small numbers
$0< \delta_1, \delta_2  \ll \lambda_0$. The domains of Lyapunov charts are  subsets of 
$\R \times \R^3$, with norm $|(v,w)|':=\max\{|v|, |w|\}$ where $|\cdot|$ denotes Euclidean 
norm on $\R$ or $\R^3$, and $B(r):=\{q \in \R \times \R^3 : |q|' \le r\}$.
Norms on $\Sigma_0$ are denoted by $\|\cdot\|$ as before.
We first 
state -- without proof -- their properties, postponing explanation for some aspects to the end.

\bigskip \noindent
$\bullet$ On a set $\Gamma \subset \Sigma_0$ of full $\hat \mu$-measure are defined 

\medskip \noindent
(a) a measurable family  of linear maps $L : \Gamma \to \mc L(\R \times \R^3, T_p\Sigma_0)$
with
\[
L(p)(\R \times \{0\}) = E^u(p), \quad L(p)(\{0\} \times \R^3) = E^s(p)\ ,
\]

\smallskip \noindent
(b) a measurable function $\ell : \Gamma \to [0, \infty)$ with
\begin{equation} \label{slowvary}
e^{-\delta_1} \ell(p) \le \ell (H(p)) \le e^{\delta_1} \ell(p) \qquad \mbox{for a.e. } p\ .
\end{equation}

\medskip \noindent
$\bullet$ The Lyapunov chart at $p$ is given by 
$$
\Phi_p : B(\delta_2 \ell(p)^{-1}) \to \Sigma_0 \ , \qquad \Phi_p = L(p)|_{B(\delta_2 \ell(p)^{-1})}\ .
$$
Here we have identified neighborhoods of $0$ in $T_p\Sigma_0$ with neighborhoods of $p$ in
$\Sigma_0$ via the exponential map. Connecting maps between charts  
$$
\mc H_p : B(\delta_2 \ell(p)^{-1})) \to \R \times \R^3 \quad \mbox{are}
\quad \mc H_p = (\Phi_{H(p)})^{-1} \circ H \circ \Phi_p\ .
$$

\medskip \noindent
$\bullet$ The linear maps $L(p)$, hence $(D\Phi_p)_0$ and $(D\mc H_p)_0$, are designed to 
produce the following one-step hyperbolicity:
\begin{itemize}
\item[(i)] for $v \in \R \times \{0\}$, $|D(\mc H_p)_0(v)| \ge e^{\lambda_0}|v|$,

for $v \in \{0\} \times \R^3$, $|D(\mc H_p)_0(v)| \le e^{-\lambda_0}|v|$.
\end{itemize}

\medskip \noindent
$\bullet$ The restriction of $\Phi_p$ to $B(\delta_2 \ell(p)^{-1})$ ensures the following:
\begin{itemize}
\item[(ii)] $\Phi_p(B(\delta_2 \ell(p)^{-1})) \cap \mc S = \emptyset$;
\item[(iii)] for all $q, q' \in B(\delta_2 \ell(p)^{-1})$, 
\[
\|\Phi_p(q)-\Phi_p(q')\| \le |q-q'|' \le \ell(p) \|\Phi_p(q)-\Phi_p(q')\|\ ;
\]
\item[(iv)] Lip$(\mc H_p - (D\mc H_p)_0) \le \delta_2$, and
\item[(v)] Lip$(D\mc H_p) \le \ell(p)$.
\end{itemize}

\bigskip \noindent
{\bf Property (ii) and the slowly varying property of $\ell$}

\medskip
Property (ii) and property (\ref{slowvary}) of $\ell$ are used  to ensure the overflowing 
condition needed in the proof of local unstable manifolds for the connecting maps $\mc H_p$. 
Charts for maps with singularities were treated in {\cite{katok2006invariant}}, but since the setting of {\cite{katok2006invariant}} is more
complicated than the one here and this part of the theory is less
standard we review the main ideas on how to arrange for (\ref{slowvary}) and property (ii).

Given a measurable function $\ell_1: \Gamma \to [1,\infty)$,
we construct $\ell$ with the property in (\ref{slowvary}) by letting
\begin{equation} \label{slow2}
\ell(p) \ = \ \sup_{n \in \Z} \ e^{-\delta_1 |n|} \ell_1(H^np) 
\end{equation}
provided the right side is finite for a.e. $p$. Let us assume, without proof, that a function $\ell_0$ satisfying
\[
e^{-\delta_1} \ell_0(p) \le \ell_0 (H(p)) \le e^{\delta_1} \ell_0(p) 
\]
and all the properties above except for Item (ii) have  been constructed. Let 
\[
\ell_1(p) = \max\{\ell_0(p), d(p, \mc S)^{-1}\}\]
where $d(p, \mc S)$ is distance to the singularity set, and define $\ell(p)$ as in (\ref{slow2}).
To check the finiteness of the right side, we observe that
\[
\ell_1(H^np) \le \max\{\ell_0(H^n(p)), e^{-\delta_1 |n|} d(H^np, \mc S)^{-1}\},
\]
and the quantities $e^{-\delta_1 |n|} d(H^np, \mc S)^{-1}, n \in \Z$, are 
uniformly bounded because 
\[
\sum_{n=0}^\infty \hat \mu (\mc S_{e^{-n\delta_1/2}}) < \infty
\]
by Proposition \ref{Prop:InvMeasBarH}, so by the Borel-Cantelli Lemma, 
for $\hat \mu$-a.e. $p$, there is $n(p)$ such that 
$H^{|n|}(p) \not \in \mc S_{e^{-n\delta_1/2}}$ for all $n \ge n(p)$.

It remains to check that  $\ell$ so defined satisfies Item (ii): Since $\ell(p) \ge d(p, \mc S)^{-1}$,
it follows that for all $p' \in \Phi_p(B(\delta_2 \ell(p)^{-1}))$,
$\|p-p'\| \le \delta_2 \ell(p)^{-1} \le \delta_2 d(p, \mc S)$.

\bigskip \noindent
{\bf Local unstable manifolds and distortion} 

\medskip
The following results gleaned from Lyapunov charts are central to the definition of SRB measures. 
We state them without proof as
they are standard; see {the references above}. Below we
write the domain of charts as $B(r) = B^u(r) \times B^s(r)$ where
$B^u(r)=\{v \in \R : |v| < r\}$ and $B^s(r) = \{v \in \R^3: | v| < r\}$.

\begin{proposition}\label{Prop:LocManLyapCharts} For $\delta_2>0$ small enough, there are constants $D_\ell$
(depending on $\ell$) with respect to which the following holds at 
$\hat \mu$-a.e. $p$: 
\begin{itemize}
\item[(a)] {\rm (Existence of local unstable manifolds)} There is a $C^2$ function 
\[
\beta_p^u: B^u(\delta_2 \ell(p)^{-1}) \to B^s(\delta_2 \ell(p)^{-1})
\]
with the property that 

-- $\beta_p^u(0)=0, \ D\beta_p^u(0) = 0$ and $\|D\beta_p^u\| <\frac 1{10}$;

-- $(\mc H_{H^{-1}p})^{-1}\mbox{graph}(\beta_p^u) \subset \mbox{graph}(\beta^u_{H^{-1}p})$

-- for all $q_1, q_2 \in \mbox{graph}({\beta_p^u})$, $|(\mc H_{H^{-1}p})^{-1}q_1- (\mc H_{H^{-1}p})^{-1}q_2|'
< e^{-\lambda_0} |q_1-q_2|'$\ .

\medskip \noindent
An analogous statement holds for local stable manifolds.
\item[(b)] {\rm (Distortion estimate)} For $q \in \mbox{graph}(\beta^u_p)$, let
 $E^u(q)$ denote the tangent space
 of graph$(\beta^u_p)$ at $q$. Then for all $q_1, q_2 \in \mbox{graph}(\beta^u_p)$ and all $n \ge 1$,
\[
\left| \log \frac{D\mc H^n_{H^{-n}p}|_{E^u(q_1)}}{D \mc H^n_{H^{-n}p}|_{E^u(q_2)}} \right|
\le D_{\ell(p)} |q_1-q_2|\ .
\]
Moreover, the function
\[
\Delta_p (q) := \lim_{n \to \infty} \frac{D\mc H^n_{H^{-n}p}|_{E^u(p)}}{D \mc H^n_{H^{-n}p}|_{E^u(q)}}
\]
is Lipschitz-continuous and bounded from above and below.
\end{itemize}
\end{proposition}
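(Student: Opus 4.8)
The plan is to run the classical Hadamard--Perron / graph-transform construction for the sequence of connecting maps $\{\mc H_p\}$. By Properties (i)--(v) and the slowly-varying bound $(\ref{slowvary})$, these form a uniformly hyperbolic cocycle of $C^2$ maps whose domains $B(\delta_2\ell(p)^{-1})$ shrink along orbits no faster than the geometric rate $e^{-\delta_1|n|}$, with $\delta_1,\delta_2\ll\lambda_0$. First I would fix a cone of aperture $\tfrac1{10}$ around $\R\times\{0\}$; since $D(\mc H_p)_0$ is block-diagonal with respect to $\R\times\{0\}\oplus\{0\}\times\R^3$ (because $L(p)(\R\times\{0\})=E^u(p)$ and $L(p)(\{0\}\times\R^3)=E^s(p)$), one-step hyperbolic with rate $\lambda_0$, and $\mathrm{Lip}(\mc H_p-(D\mc H_p)_0)\le\delta_2$, one checks that $\mc H_p$ maps this unstable cone strictly into itself and expands vectors in it by at least $e^{\lambda_0}$ (the $\delta_2$-loss being absorbed in the choice of $\lambda_0$), while dually $\mc H_p^{-1}$ does the same to the stable cone around $\{0\}\times\R^3$. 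Because the $u$-direction expansion beats the chart-radius ratio $\ell(H^{-1}p)/\ell(p)\le e^{\delta_1}$, the $\mc H_{H^{-1}p}$-image of any Lipschitz graph of slope $\le\tfrac1{10}$ over $B^u(\delta_2\ell(H^{-1}p)^{-1})$ overflows $B^u(\delta_2\ell(p)^{-1})$, so the graph transform $\Gamma_p$ carrying such a graph at $H^{-1}p$ to the portion over $B^u$ of its forward image at $p$ is well-defined, preserves the class of admissible graphs, and contracts in the sup-norm.

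Then $\beta^u_p$ is the limit $\lim_{n\to\infty}\Gamma_p\circ\Gamma_{H^{-1}p}\circ\cdots\circ\Gamma_{H^{-(n-1)}p}(0)$, equivalently the graph read off from the decreasing intersection $\bigcap_n\mc H_{H^{-1}p}\cdots\mc H_{H^{-n}p}(B(\delta_2\ell(H^{-n}p)^{-1}))$; uniqueness and the invariance $(\mc H_{H^{-1}p})^{-1}\mathrm{graph}(\beta^u_p)\subset\mathrm{graph}(\beta^u_{H^{-1}p})$ are immediate from the construction, $\beta^u_p(0)=0$ since $0$ is fixed by the cocycle, $D\beta^u_p(0)=0$ since each $D(\mc H_p)_0$ preserves the splitting, $\|D\beta^u_p\|<\tfrac1{10}$ by cone invariance, and the stated $e^{-\lambda_0}$-contraction of $(\mc H_{H^{-1}p})^{-1}$ on $\mathrm{graph}(\beta^u_p)$ is exactly the lower expansion bound on the unstable cone. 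The $C^1$ regularity of $\beta^u_p$ follows from a second, contracting, graph transform on the projectivised cone field, and the $C^2$ regularity (in particular the Lipschitz dependence of $q\mapsto E^u(q)$ along the graph) from one further bootstrap using $\mathrm{Lip}(D\mc H_p)\le\ell(p)$ together with $\delta_1\ll\lambda_0$, which keeps the relevant telescoping sums summable. The local stable manifold statement is the same argument run for the inverse cocycle $\{\mc H_p^{-1}\}$.

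For the distortion estimate (b), writing $q_j^{(k)}:=(\mc H^k_{H^{-k}p})^{-1}(q_j)\in\mathrm{graph}(\beta^u_{H^{-k}p})$ for the $k$-th backward iterate of $q_j\in\mathrm{graph}(\beta^u_p)$, I would telescope
\[
\log\frac{D\mc H^n_{H^{-n}p}|_{E^u(q_1)}}{D\mc H^n_{H^{-n}p}|_{E^u(q_2)}}
=\sum_{k=1}^{n}\log\frac{D\mc H_{H^{-k}p}|_{E^u(q_1^{(k)})}}{D\mc H_{H^{-k}p}|_{E^u(q_2^{(k)})}}.
\]
Using $\mathrm{Lip}(D\mc H_p)\le\ell(p)$ and the Lipschitz dependence of $q\mapsto E^u(q)$ along each graph, each summand is $\le C\,\ell(H^{-k}p)\,|q_1^{(k)}-q_2^{(k)}|$; by the $e^{-\lambda_0}$-contraction from (a), $|q_1^{(k)}-q_2^{(k)}|\le e^{-\lambda_0 k}|q_1-q_2|$, and by $(\ref{slowvary})$, $\ell(H^{-k}p)\le e^{\delta_1 k}\ell(p)$, so the $k$-th term is $\le C\,e^{-(\lambda_0-\delta_1)k}\,\ell(p)\,|q_1-q_2|$. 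As $\delta_1<\lambda_0$ the series converges and is bounded by $D_{\ell(p)}|q_1-q_2|$ with $D_{\ell(p)}$ depending only on $\ell(p)$; letting $n\to\infty$ the partial products are Cauchy, giving the existence of $\Delta_p(q)$, and the same uniform estimate yields its Lipschitz continuity and two-sided bounds.

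The only genuine subtlety --- and the reason the result leans on the full Lyapunov-chart package rather than a one-line contraction-mapping invocation --- is the competition between the merely measurable, possibly huge or tiny functions $\ell(p)^{-1}$ (chart radii) and $\ell(p)$ (curvature bounds) and the hyperbolicity rate $\lambda_0$: every geometric step above (overflowing of $\Gamma_p$, its sup-norm contraction, summability of the distortion series, the $C^2$ bootstrap) is borderline and goes through only because $\delta_1,\delta_2\ll\lambda_0$, i.e. because $\ell$ was constructed in Appendix A to vary slowly along orbits. The singularity set is by contrast harmless: by Property (ii), $\Phi_p(B(\delta_2\ell(p)^{-1}))\cap\mc S=\emptyset$, so each $\mc H_p$ is an honest $C^2$ diffeomorphism on all of its domain and the classical arguments apply verbatim within charts.
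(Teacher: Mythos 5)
The paper gives no proof of this proposition: it is stated ``without proof as [these results] are standard'' with a pointer to the standard references on Lyapunov charts (Pesin, Katok--Strelcyn, Ledrappier--Young, etc.). Your argument is a correct and complete version of exactly the standard graph-transform proof those references contain, and you have correctly identified where the chart package (slow variation of $\ell$, the bound $\mathrm{Lip}(D\mc H_p)\le\ell(p)$, the singularity-avoidance property (ii)) enters each step: overflowing of the graph transform, sup-norm contraction of $\Gamma_p$, summability of the telescoped distortion series, and the fact that each $\mc H_p$ is an honest $C^2$ diffeomorphism on its full chart domain.

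One small caveat worth flagging, more a matter of bookkeeping than substance: property (i) gives linear one-step expansion $|D(\mc H_p)_0 v|\ge e^{\lambda_0}|v|$ on $\R\times\{0\}$, so after absorbing the nonlinear $\delta_2$-Lipschitz perturbation the honest contraction rate of $(\mc H_{H^{-1}p})^{-1}$ on $\mathrm{graph}(\beta^u_p)$ is $e^{-\lambda_0+O(\delta_2)}$, not literally $e^{-\lambda_0}$. To get the clean constant as stated one should either build a $\delta_2$-buffer into (i) (i.e. take the linear rate to be $e^{\lambda_0+\delta_2}$), or interpret $\lambda_0$ in the proposition as a slightly smaller constant $\lambda_0'$; your phrase ``the $\delta_2$-loss being absorbed in the choice of $\lambda_0$'' gestures at this but, since $\lambda_0$ is fixed in Appendix A before $\delta_2$, the absorption should be made explicit. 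This does not affect any downstream use of the proposition and the rest of the argument is correct as written.
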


We define the $\Phi_p$-image of graph$(\beta^u_p)$, denoted $W^u_{\rm loc}(p)$,
as the {\it local unstable manifold} at $p$. 
Local stable and unstable manifolds vary in size and can be arbitrarily small
in diameter, but for $p$ in {\it uniformity sets},
i.e., sets of the form $\Gamma_{\bar \ell} := \{\ell \le \bar \ell\}$ where $\bar \ell$ is a fixed number,
they contain disks of fixed radii depending on $\bar \ell$.

\bigskip \noindent
{\bf B. Outline of proof of Theorem \ref{Thm:EntFormSing}} following
\cite{ledrappier1984proprietes}

\medskip
Let  $(f, \theta)$ be as in the statement of the theorem. We construct Lyapunov charts with 
the properties in Appendix A. Note in particular Item (ii), which ensures that the images of
charts do not meet $\mc S$. This, we claim, is all that is needed to ensure that the argument
in {\cite{ledrappier1984proprietes}} will go through. We summarize very briefly this argument to give some idea of 
what it entails:

\medskip \noindent
{\bf Step 1.} One constructs a measurable partition $\eta$  with the properties that

(i) it is subordinate to unstable manifolds, i.e., for $\nu$-a.e. $p$, $\eta(p) \subset W^u(p)$ 
and 

\qquad contains a neighborhood of $p$ in $W^u(p)$, and

(ii) $\eta$ is a Markov partition, i.e., $\eta \le f^{-1}\eta$, 

\noindent
and shows that $h_\theta(f) = h_\theta(f, \eta)$ via an auxiliary finite-entropy partition {(see also \cite{ledrappierstrelcynproof})}.

The partition $\eta$ is constructed by taking a stack of local unstable disks 
$\{D^u_\alpha\}$ through points in a uniformity set $\Gamma_{\ell_0}$ (as defined at the end of
 Sect. \ref{Sec:UnHypSys}), iterating forward and taking intersections.

\medskip \noindent
{\bf Step 2.} Consider the quotient $M/\eta$. Let  $\theta_T$ be the quotient measure on this space and $\{\theta_\eta\}$
a family of conditional probabilities on elements of $\eta$. We introduce a new 
measure $\Theta$ so that $\Theta_T=\theta_T$ and for $\theta$-a.e. $p$, $\Theta_\eta$ on
$\eta(p)$ has a density equal to $\Delta_p(\cdot)$ normalized where $\Delta_p(\cdot)$ 
is as defined in Proposition \ref{Prop:LocManLyapCharts}(b). One then proves, using the equality in (\ref{entropyformula}) 
and an argument relying on the the convexity of $-\log$, that $\Theta=\theta$, so $\theta$ 
is an SRB measure.

\medskip
As can be seen from the outline above, all the structures involved in the proof originate from
within Lyapunov charts: The $D^u_\alpha$ are local unstable manifolds obtained from charts.
The putative
conditional densities $\Delta_p(\cdot)$ are defined on elements of $\eta$ (which can be 
much larger than charts),
but for $\theta$-a.e. $p$, $f^{-n}(\eta(p)) \subset W^u_{\rm loc}(q)$ for some 
$q \in \Gamma_{\ell_0}$, so again it suffices to have distortion estimates for local 
manifolds within charts as in Proposition \ref{Prop:LocManLyapCharts}(b).
In particular, once property (ii) in Sect. \ref{Sec:UnHypSys} is ensured, the singularity set does not appear in these constructions, except to render
the elements of $\eta$ more cut up, but that is immaterial.

}

\bibliographystyle{amsalpha} 
\bibliography{Untitled}

\end{document}